\newtheorem{bigthm}{Theorem}   
\renewcommand{\emptyset}{\varnothing}
\newcommand{\sumtwo}[2]{\sum_{\substack{#1 \\ #2}}} 
\newcommand{\abs}[1]{\left| #1\right|}
\newcommand{\calA}{\mathcal{A}}
\newcommand{\calB}{\mathcal{B}}
\newcommand{\calC}{\mathcal{C}}
\newcommand{\calE}{\mathcal{E}}
\newcommand{\calN}{\mathcal{N}}
\newcommand{\calT}{\mathcal{T}}
\newcommand{\calY}{\mathcal{Y}}
\newcommand{\frd}{\mathfrak{d}}
\newcommand{\frh}{\mathfrak{h}}
\newcommand{\frs}{\mathfrak{s}}
\newcommand{\frt}{\mathfrak{t}}
\newcommand{\frD}{\mathfrak{D}}
\newcommand{\bbA}{\mathbb{A}}
\newcommand{\bbB}{\mathbb{B}}
\newcommand{\bbE}{\mathbb{E}}
\newcommand{\bbN}{\mathbb{N}}
\newcommand{\bbP}{\mathbb{P}}
\newcommand{\bbR}{\mathbb{R}}
\newcommand{\bbS}{\mathbb{S}}
\newcommand{\bbZ}{\mathbb{Z}}
\newcommand{\sfe}{{\sf e}}
\newcommand{\sfp}{{\sf p}}
\newcommand{\sfX}{{\sf X}}
\newcommand{\sfD}{{\sf D}}
\newcommand{\sfT}{{\sf T}}
\newcommand{\ueta}{\underline{\eta}}
\newcommand{\Zd}{\bbZ^d}
\newcommand{\Rd}{\bbR^d}
\newcommand{\Ka}{{\mathbf K}}
\newcommand{\Ua}{{\mathbf U}}
\newcommand{\UaK}[1]{{\mathbf U}_{#1}}
\newcommand{\setof}[2]{\left\{#1 \,:\, #2 \right\}}
\newcommand{\Lpartial}{\partial_{\scriptscriptstyle\rm v}}
\newcommand{\lb}{\left(}
\newcommand{\rb}{\right)}
\newcommand{\lbr}{\left\{}
\newcommand{\rbr}{\right\}}
\newcommand{\Step}[1]{\textbf{S{\small TEP}\,#1}}
\newcommand{\sfb}{\mathsf b}
\newcommand{\sff}{\mathsf f}
\newcommand{\1}{\mathbf{1}}
\newcommand{\smo}[1]{{\mathrm o}\lb #1\rb }
\newcommand{\df}{\stackrel{\bigtriangleup}{=}}
\newcommand{\leqs}{\lesssim}            
\newcommand{\geqs}{\gtrsim}             
\newcommand{\be}{\begin{equation}}
\newcommand{\ee}{\end{equation}}
\begin{document}

\title{Self-Attractive Random Walks:\\ The Case of Critical Drifts}
\author{Dmitry Ioffe\inst{1}\thanks{Supported by the Israeli Science Foundation grant 817/09.} \and Yvan Velenik\inst{2}\thanks{Supported by the Swiss National Science Foundation.}}
\institute{Technion, \email{ieioffe@technion.ac.il} \and Universit\'e de Gen\`eve, \email{Yvan.Velenik@unige.ch}}

\maketitle

\begin{abstract}
Self-attractive random walks (polymers) undergo a phase transition in terms of the applied drift (force): If the drift is strong enough, then the walk is ballistic, whereas in the case of small drifts self-attraction wins and the walk  is sub-ballistic. We show that, in any dimension $d\geq 2$, this transition is of first order. In fact, we prove that the walk is already ballistic at critical drifts, and establish the corresponding LLN and CLT.
\end{abstract}
\section{Introduction and results}
\subsection{Random walks in attractive potentials}
For each nearest-neighbor path $\gamma = (\gamma (0), \dots ,\gamma (n))$
on $\Zd$, we define the following quantities:

\noindent
\emph{Length} $\sfT (\gamma ) \equiv |\gamma | \df n$ and
 \emph{displacement}  $\sfX (\gamma ) \df \gamma (n) -\gamma (0)$.

\noindent
\emph{Local times:} For a given site $x\in\Zd$, we set
\[
 \ell_ \gamma [x] \, \df \, \sum_{k=0}^{|\gamma |} \1_{\{ \gamma (k) =x\}} .
\]

\noindent
\emph{Potential}  $\Phi (\gamma )$:
In this paper, we concentrate on potentials $\Phi$ which depend only on
the local times of $\gamma$; specifically, potentials of the form
\be 
\label{Potential}
\Phi (\gamma )\, \df\, \sum_{x\in\Zd}\phi\lb \ell_\gamma [x] \rb.
\ee
To every $h\in\Rd$ and $\lambda \in\bbR$, we associate the grand-canonical weights defined by
\be
\label{hlWeights}
a_{h ,\lambda} (\gamma )\, \df\, 
{\rm e}^{-\Phi (\gamma ) + h\cdot \sfX (\gamma ) -\lambda {\sfT(\gamma)}} 
\lb \frac{1}{2d}\rb^{\abs{\gamma }}
\df
{\rm e}^{-\Phi (\gamma ) + h\cdot \sfX (\gamma ) -\lambda {\sfT(\gamma)}} \sfp_d (\gamma ), 
\ee
where $ h\cdot \sfX $ is the usual scalar product on $\Rd$. 
If either $h$ or $\lambda$ is equal to zero, the corresponding entry is dropped from
the notation.  The potential $\phi$ is assumed to satisfy $\phi(0)=0$ and $\phi(n)>0$ for all $n>0$, to be non-decreasing and to be attractive:

\noindent
{\bf (A)} For all $\ell,m\in\bbN$,  $\phi (\ell+m ) \leq \phi (\ell) +\phi (m)$.
\smallskip 

\noindent
Notice that this condition implies that $\lambda_0=\lim_{n\to\infty} \phi(n)/n$ exists. Redefining $\lambda$ as $\lambda+\lambda_0$, it follows that we can assume, without loss of generality, that $\phi$ is sub-linear:

\noindent
{\bf (SL)} $\lim_{n\to\infty}\phi (n) /n =0$.
\smallskip

\noindent
An important class of examples is given by annealed random walks in random potentials: Let 
$V$ be a random variable which takes
values in $[0, \infty ]$; then $\phi (\ell ) = -\log\bbE\lb {\rm e}^{-\ell V }\rb$. In that case, the assumption (SL) can be reformulated as $0\in {\rm supp}(V)$.

\smallskip
\noindent
The main object we explore in this paper is the canonical probability measure
\be  
\label{eq:An}
\bbA_h^n (\gamma ) \df \frac{1}{A_h^n} a_h (\gamma )\1_{\lbr {\sfT(\gamma)}=n\rbr},
\ee
where $A_h^n$ is a normalization constant.
The behavior of $\bbA_h^n$ depends on the size of the drift $h$. We shall distinguish between two cases:
\begin{definition}
 \label{def:ballistic}
Assume that there exists $v = v(h)\in\bbR^d$, such that
\be 
\label{eq:velocity}
\lim_{n\to\infty} \bbA_h^n \Bigl( \frac{\sfX}{n}\Bigr) = v. 
\ee
Then the model is said to be sub-ballistic if $v = 0$ and ballistic if $v\neq 0$. 
\end{definition}
It is not clear \textit{a priori} that the limit in~\eqref{eq:velocity} exists.
One of our results here asserts that this is indeed the case for any choice of $h$.

The works~\cite{Sznitman-book,Zerner} explain how one should try to quantify drifts $h$:
There exists a closed convex body $\Ka\subset\bbR^d$ with non-empty interior,
$0\in {\rm int}\lb \Ka\rb$, such that 

\noindent
$\triangleright$ $h\in {\rm int}\lb \Ka\rb$ implies that the model is sub-ballistic~\cite{Zerner,Flury-LD}.

\noindent
$\triangleright$ $h\not\in \Ka$ implies that the model is ballistic~\cite{Zerner,Flury-LD,IoffeVelenik-Annealed}.
\smallskip

\noindent
We shall call the corresponding drifts sub-critical and super-critical, respectively.
$\partial\Ka$ is the set of critical drifts. Two main issues we address here are (i) what is the order of the 
ballistic to sub-ballistic transition, i.e., whether $v(h)$ in~\eqref{eq:velocity} vanishes as
$h\in \Ka^c$ converges to $\partial \Ka$, and (ii) what are the properties of $\bbA_h^n$ at critical drifts
$h\in\partial\Ka$.  
\begin{bigthm}
\label{thm:A}
Consider dimensions $d\geq 2$ and assume {\rm (A)} and ${\rm (SL)}$. Let $h\in\partial \Ka$. Then there exists
$v = v(h)\neq  0$ such that~\eqref{eq:velocity} holds. Moreover, the
displacement $\sfX$ satisfies a law of large numbers: There exists $\epsilon_n\downarrow 0$ such that 
\be  
\label{eq:LLN}
\sum_n \bbA_h^n\Bigl( \Bigl| \frac{\sfX}{n} - v(h) \Bigr| >\epsilon_n \Bigr) <\infty. 
\ee
$\sfX$ also satisfies a CLT: There exists a positive definite covariance matrix $\Sigma = \Sigma (h)$
such that, under $\lbr \bbA_h^n\rbr$, 
\be  
\label{eq:CLT}
\frac{1}{\sqrt{n}}\lb \sfX - n v(h) \rb \Rightarrow \calN (0, \Sigma ).
\ee
Finally, 
the mean displacement $v = v (h)$ is  continuous  on the (closed)  set of critical and super-critical 
drifts $\bar\Ka^c = \partial \Ka\cup\Ka^{c}$. In particular $\min_{h\in\partial \Ka} \abs{v (h )}$ is well defined and 
strictly positive. 
\end{bigthm}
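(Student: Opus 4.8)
The plan is to read off the velocity $v(h)$ from the Ornstein--Zernike renewal structure that already underlies the first three assertions, and to show that this structure depends nicely on $h$ throughout $\bar\Ka^c$, in particular as $h\to\partial\Ka$. Recall that \eqref{eq:velocity}, \eqref{eq:LLN} and \eqref{eq:CLT} are obtained by fixing a direction close to that of $h$, cutting a typical path at its cone points $0=p_0,p_1,p_2,\dots$, and proving that the increments $\omega_i=(p_{i-1}\to p_i)$ become i.i.d.\ ``irreducible'' pieces distributed as
\be
\label{eq:hatA-plan}
\hat a_h(\omega)\ \df\ a_{h,\mu(h)}(\omega)\ =\ {\rm e}^{-\Phi(\omega)+h\cdot\sfX(\omega)-\mu(h)\sfT(\omega)}\,\sfp_d(\omega)
\ee
over the countable set of irreducible pieces, where $\mu(h)\df\lim_n\tfrac1n\log A_h^n$ (the limit exists since $\log A_h^n$ is superadditive, by {\rm (A)} and monotonicity of $\phi$) is the unique number with $\Xi_h(\mu(h))=1$, $\Xi_h(\lambda)\df\sum_{\omega\,\mathrm{irred}}a_{h,\lambda}(\omega)$. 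Here $\Xi_h(\cdot)$ is strictly decreasing because $\sfT(\omega)\ge1$, and $\mu(h)>0$ for super-critical $h$ while $\mu(h)=0$ on $\partial\Ka$. The one nontrivial input I want to use from the proofs of \eqref{eq:LLN}--\eqref{eq:CLT} is that the irreducible weights keep exponential tails \emph{with constants uniform up to the critical surface}: for every $h_0\in\partial\Ka$ there are $C,c>0$ and a neighbourhood $\calO\ni h_0$ with
\be
\label{eq:tails-plan}
\hat a_h(\omega)\ \le\ C\,{\rm e}^{-c(\sfT(\omega)+|\sfX(\omega)|)}\qquad\text{for all irreducible }\omega\ \text{and all }h\in\calO\cap\bar\Ka^c .
\ee

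Granting \eqref{eq:tails-plan}, the rest is soft. From the renewal representation and the law of large numbers for i.i.d.\ sums, for every $h\in\bar\Ka^c$ the velocity is the ratio of per-increment means,
\be
\label{eq:v-plan}
v(h)\ =\ \frac{N(h)}{D(h)},\qquad N(h)\df\sum_{\omega\,\mathrm{irred}}\sfX(\omega)\hat a_h(\omega),\quad D(h)\df\sum_{\omega\,\mathrm{irred}}\sfT(\omega)\hat a_h(\omega)\ \ge\ 1 ,
\ee
and by \eqref{eq:tails-plan} both series converge to finite functions on $\calO\cap\bar\Ka^c$. Moreover \eqref{eq:tails-plan} makes $(h,\lambda)\mapsto\Xi_h(\lambda)$ finite and real-analytic on a neighbourhood of $\{(h,\mu(h)):h\in\bar\Ka^c\}$, with $\partial_\lambda\Xi_h(\mu(h))=-D(h)<0$, so the analytic implicit function theorem yields an analytic extension of $h\mapsto\mu(h)$ to a neighbourhood of $\bar\Ka^c$; in particular $\mu$ is continuous there, hence so is each summand $\hat a_h(\omega)$ in \eqref{eq:v-plan}. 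Since \eqref{eq:tails-plan} provides an $h$-independent summable majorant, dominated convergence gives continuity of $N$ and $D$ on $\calO\cap\bar\Ka^c$, and $D\ge1$ makes $v=N/D$ continuous there. On $\mathrm{int}(\Ka^c)$ continuity of $v$ (indeed $v=\nabla\mu$, by convexity of $h\mapsto\tfrac1n\log A_h^n$ and $\bbA_h^n(\sfX/n)=\nabla_h\bigl(\tfrac1n\log A_h^n\bigr)\to\nabla\mu(h)$) is the classical ballistic-phase statement. Patching, $v$ is continuous on all of $\bar\Ka^c$. Finally $\Ka$ is a compact convex body, so $\partial\Ka$ is compact; $v$ is continuous on it and nowhere zero by the first assertion of the theorem, so $h\mapsto|v(h)|$ is a continuous, strictly positive function on a compact set and attains a strictly positive minimum.

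Thus the whole difficulty sits in \eqref{eq:tails-plan}, and within it in the word ``uniform'': a priori the mass gap governing the exponential decay of the irreducible weights could close up as $h\to\partial\Ka$, in which case $D(h)$ would diverge and $v(h)$ could degenerate to $0$ in the limit. Ruling this out at each fixed critical drift is precisely what the analysis behind \eqref{eq:LLN}--\eqref{eq:CLT} accomplishes; the passage from ``at each $h_0\in\partial\Ka$'' to ``uniformly on a neighbourhood of $\partial\Ka$'' should then be routine, since $\partial\Ka$ is compact and the underlying estimates (energy--entropy / coarse-graining bounds on the irreducible pieces) are stable under small perturbations of $h$. I therefore expect the only genuine work in this last statement to be checking that the constants produced in the first part of the theorem can be taken locally uniform in $h$.
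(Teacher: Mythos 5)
Your overall architecture---read $v(h)=\bbE_h\sfX/\bbE_h\sfT$ off the irreducible decomposition, prove continuity of the per-increment means on $\bar\Ka^c$ by dominated convergence, and conclude $\min_{\partial\Ka}|v|>0$ by compactness---is the same as the paper's (Subsections 3.4 and 3.7). But the quantitative input on which you hang everything is false at criticality. You postulate uniform \emph{exponential} tails in $\sfT(\omega)$ for the irreducible weights up to and including $h\in\partial\Ka$. At a critical drift the effective killing rate is $\lambda=0$, and an irreducible piece can reach displacement of order $K$ (paying $\asymp{\rm e}^{-cK}$) and then spend time $t$ confined to the diamond $(u+\tilde\calY_h)\cap(v-\tilde\calY_h)$ of diameter $\asymp K$, paying only the confinement cost $\asymp{\rm e}^{-c't/K^2}$; optimizing $K\asymp t^{1/3}$ shows that the total irreducible mass with $\sfT\geq t$ is of order ${\rm e}^{-ct^{1/3}}$ and no better. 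Accordingly the paper proves only the stretched exponential tail $\bbP_h(\sfT\geq t)\leqs{\rm e}^{-\kappa_T t^{1/3}}$ (Theorem~4(b)), and this is essentially sharp. Two of your steps break as a result. First, $\Xi_h(\lambda)=\sum_\omega a_h(\omega){\rm e}^{-\lambda\sfT(\omega)}$ is $+\infty$ for every $\lambda<0$ when $h\in\partial\Ka$, so there is no finite (let alone analytic) two-sided extension of $\Xi_h$ near $\lambda=\mu(h)=0$, and the implicit-function-theorem continuation of $\mu$ across $\partial\Ka$ is unavailable; the paper instead obtains continuity of $\lambda(\cdot)$ from its convexity together with $\lambda\equiv0$ on $\partial\Ka$ (the IFT is used only in the drift variable $f$, for Theorem~B, where finiteness follows from the exponential tails of $\sfX$ alone). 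Second, even read as a pointwise bound on $\hat a_h(\omega)$ your estimate does not give $D(h)<\infty$: what is needed is a bound on the \emph{total mass} $\sum_{\sfT(\omega)\geq t}\hat a_h(\omega)$, i.e.\ control of the number of irreducible pieces, and as a total-mass bound your display is false. The dominated-convergence part of your argument does survive once the majorant is replaced by the correct one (exponential in $|\sfX|$, stretched exponential in $\sfT$), which is still strong enough for $\bbE_h\sfT<\infty$ and for $\lim_{g\to h}\bbE_g(\sfX,\sfT)=\bbE_h(\sfX,\sfT)$.

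A second, more structural, gap: you take the i.i.d.\ irreducible decomposition at critical $h$ as given. Establishing it (Theorems~1 and~2 of the paper) is the actual content of the first three assertions of Theorem~A and is where essentially all the work lies: the supercritical argument controls the connecting pieces $\eta_\ell$ by comparison with a walk killed at rate $\lambda>0$, and this fails identically at $\lambda=0$, where the exit mass from a $K$-ball is only polynomially small. The paper must introduce a new coarse-graining (dirty boxes, hairs, and the range estimate of Lemma~3) to recover exponential costs from positivity of the sub-linear potential alone. Your closing remark that passing to uniformity ``should be routine'' is reasonable for the compactness step, but the estimate you propose to make uniform is not the one that holds.
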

While concluding the work on this manuscript, we have learned about the very nice and completely
independent paper~\cite{KosyginaMountford}. The authors of~\cite{KosyginaMountford} employ
different methods and techniques and their results overlap with ours. In one dimension, which is
not addressed here at all, they develop a comprehensive analysis of critical drifts whenever potentials
are different from that of the discrete Wiener sausage,
$\phi(\ell)=\beta\1_{\{\ell\geq1\}}$.
In particular, they prove that, in dimension $d=1$, the critical model is ballistic,
which implies that the corresponding transition is of first order. Thus, the one-dimensional discrete sausage case
is the only instance of a higher order transition~\cite{Vermet,IV-BPS}.
In higher dimensions, $d\geq 2$, the results of~\cite{KosyginaMountford} do not imply existence of critical velocity in~\eqref{eq:velocity}. In fact, the authors work in the conjugate ensemble of crossing random walks (see
Subsection~\ref{sub:crossing} below). Morally, they show that  $\liminf_{n\to\infty} \bbA_h^n\lb \abs{\sfX}/{n}\rb >0$ whenever $h\in\partial\Ka$.  In particular, although they did not state it explicitly, their results imply a first order transition in the following sense: If a sequence $\lbr h_n\rbr$ of super-critical drifts is converging to $h\in\partial\Ka$, then $\liminf \abs{v(h_{{n}})}>0$.

\bigskip
The problem considered above possesses numerous physical interpretations, such as stretched polymers, or magnetic flux lines in superconductors; see, e.g., \cite{MehraGrassberger}. In the polymer interpretation, our results say that a (homo)polymer with attraction between overlapping monomers undergoes a first order phase transition from a collapsed phase to an extended phase, as the opposite stretching forces acting at its endpoints cross a critical (direction-dependent) threshold, provided that the spatial dimension is at least $2$. That this phase transition is of first order was first predicted in~\cite{MehraGrassberger}, on the basis of numerical simulations, in the case of the discrete Wiener sausage. Our results fully confirm this conjecture and extend it to general self-attractive polymers; we also prove that the polymer is stretched at the phase transition point, with a well-defined macroscopic extension (and Gaussian fluctuations).

The corresponding problem when excluded-volume effects are also taken into account (say, by modeling the polymer path by a self-avoiding walk, with attraction between spatially neighboring monomers) is of major physical relevance, but remains beyond the reach of current rigorous techniques. It should be noted though that the phase transition is also expected to be of first order in that case when $d\geq 3$, but seems to be of second order when $d=2$~\cite{GrassbergerHsu}.

\subsection{Structure of the paper}
\label{sub:structure}
In Subsection~\ref{sub:crossing} below, we define the basic macroscopic quantity, the inverse correlation
length, and formulate two additional results closely related to Theorem~\ref{thm:A}: The geometry
of the critical set $\Ka$ is described in Theorem~\ref{thm:B}, and a reformulation of Theorem~\ref{thm:A} 
for the conjugate ensemble of crossing random walks is given in Theorem~\ref{thm:C}. 

The new coarse-graining procedure which lies in the heart of our approach is developed in
Section~\ref{sec:CG}. The eventual output is a large finite scale renewal type description of 
typical critical polymers as  formulated in Theorem~\ref{thm:renewal}, 
which sets up the stage for the proofs of our main results in Section~\ref{sec:Proofs}. 

\medskip
For convenience, we collect a number of elementary results about simple random walks in Appendix~\ref{app:RW}.

\subsection{Some notations}
Given two indexed sequences $\lbr f_\alpha\rbr$
and $\lbr g_\alpha\rbr$ of positive numbers, we say that $f_\alpha\leqs g_\alpha$ \emph{uniformly} in
$\alpha$ if there exists a constant $c<\infty$ such that $f_\alpha \leq cg_\alpha$ \emph{for all}
indices $\alpha$. We shall also write $f_\alpha\simeq g_\alpha$ if both $f_\alpha\leqs g_\alpha$ and $g_\alpha\leqs f_\alpha$.

We shall denote by $\abs{x}$ the Euclidean norm of $x\in\bbR^d$ and by $\|x\|\df \sum_1^d \abs{x_i}$ its $\ell_1$-norm.

We write $x\sim y$ when $x,y$ are neighboring vertices of $\Zd$.

If $B\subset\bbR^d$, we shall write $\Lpartial B\df\setof{z\not\in B}{\exists y\in B\,:\, y\sim z}$ for its external boundary when seen as a subset of $\Zd$.

Finally, if $\gamma=(\gamma(0),\ldots,\gamma(n))$ is some path, then we denote by $\mathring{\gamma}$ the path with its final endpoint removed: $\mathring{\gamma} \df (\gamma(0),\ldots,\gamma(n-1))$.

\subsection{Conjugate ensembles, crossing random walks and the set $\Ka$}
\label{sub:crossing}
\paragraph{Inverse correlation length $\xi$ at criticality} 
Recall that we drop the subscripts $h$ or $\lambda$ from the notation  
$a_{h, \lambda}$ whenever the corresponding parameter equals zero.
The critical two-point function $A(x)$ is defined by
\be 
\label{eq:two-point}
 A (x) \df \sum_{\gamma : 0\to x} a (\gamma ) .
\ee
The following observation is presumably well understood. We put it here for completeness. 
\begin{lemma}
In the attractive case, the critical two-point function $ A (x)$ is finite  in 
any dimension $d\geq 1$. Consequently, 
\[
 \xi (x)\, \df \, -\lim_{n\to\infty}\frac1{n}\log A (\lfloor nx\rfloor ) 
\]
exists and is a
norm on $\bbR^d$. Moreover,
\be 
\label{eq:Additive}
A (x) \leq {\rm e}^{-\xi  (x) } , 
\ee
for all $x\in\bbZ^d$. 
\end{lemma}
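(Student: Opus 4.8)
\emph{Outline of the argument.}

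\smallskip\noindent\textbf{Step 1: Finiteness of $A(x)$.} Since $\phi$ is non-decreasing and $\phi(1)>0$, every path $\gamma$ satisfies $\Phi(\gamma)\geq\phi(1)\,R(\gamma)$, where $R(\gamma)=\#\{x:\ell_\gamma[x]\geq1\}$ is its range. Writing $A(x)=\sum_{n\geq0}A_n(x)$ with $A_n(x)=\sum_{\gamma:0\to x,\,|\gamma|=n}a(\gamma)$ and using $\sum_{\gamma:0\to x,\,|\gamma|=n}\sfp_d(\gamma)=\Prob_0(S_n=x)\leq1$, I would bound
\[
A_n(x)\ \leq\ \bbE_0\bigl[e^{-\phi(1)R_n}\bigr],
\]
$R_n$ the range of simple random walk, so that finiteness in \emph{every} $d$ reduces to summability of $\bbE_0[e^{-\nu R_n}]$, $\nu=\phi(1)$. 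This is the classical Donsker--Varadhan bound $\bbE_0[e^{-\nu R_n}]\leq Ce^{-cn^{d/(d+2)}}$; for the (weaker) statement actually needed I would give a self-contained proof, splitting at a level $r$: bound $\Prob_0(R_n\leq r)$ by $\bigl(\#\{S\ni0\text{ connected},\,|S|=r\}\bigr)\times\max_S\Prob_0(\text{SRW stays in }S\text{ for }n\text{ steps})$, insert the lattice-animal count $\leq\kappa(d)^r$ and the discrete Faber--Krahn estimate $\Prob_0(\text{stay in }S)\leq\mathrm{poly}(r)\,(1-c(d)r^{-2/d})^n$, and optimise at $r\asymp n^{d/(d+2)}$; the factor $e^{-\nu r}$ coming from $\Phi$ is exactly what makes the sum over $S$ converge \emph{for every} $\phi(1)>0$. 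This is the only genuinely dimension-sensitive step: in $d\leq2$ the range is sublinear, so discarding the potential is fatal.

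\smallskip\noindent\textbf{Step 2: Super-multiplicativity and existence of $\xi$.} By {\rm (A)} and additivity of local times under concatenation, $\Phi(\gamma_1\cdot\gamma_2)\leq\Phi(\gamma_1)+\Phi(\gamma_2)$, and since $\sfp_d$ is multiplicative, $a(\gamma_1\cdot\gamma_2)\geq a(\gamma_1)a(\gamma_2)$. Decomposing a path from $0$ to $x+y$ at its first visit to $x$ and translating yields $A(x+y)\geq F(x)A(y)$ with $F(x)=\sum_{\gamma:0\to x,\ \ell_\gamma[x]=1}a(\gamma)$. To upgrade this to honest super-multiplicativity $A(x+y)\geq c\,A(x)A(y)$ (for a universal $c\in(0,1]$) I must show $F(x)\geq c\,A(x)$, i.e. control the return loops producing $\ell_\gamma[x]\geq2$: for $d\geq3$ this is immediate, $A(x)\leq F(x)/(1-q_d)$ with $q_d<1$ the SRW return probability; for $d\leq2$ I would again invoke the range estimate of Step~1 — a loop confined to the already-visited set of size $r$ carries $\sfp_d$-mass $\leq1$ but then the base path has $\Phi\geq\phi(1)r$, whereas an escaping loop costs an extra $e^{-\phi(1)}$, and optimising over $r$ gives geometric decay in the number of loops. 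Granting $A(x+y)\geq c\,A(x)A(y)$, the sequence $n\mapsto-\log A(\lfloor nx\rfloor)$ is sub-additive up to an additive constant (using $\|\lfloor(m+n)x\rfloor-\lfloor mx\rfloor-\lfloor nx\rfloor\|\leq d$ and that $A$ is bounded below on bounded sets), so Fekete's lemma gives the existence of $\xi(x)=-\lim_n\frac1n\log A(\lfloor nx\rfloor)$.

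\smallskip\noindent\textbf{Step 3: $\xi$ is a norm and \eqref{eq:Additive}.} A shortest monotone path from $0$ to $x$ has weight $\geq e^{-\phi(1)}(e^{-\phi(1)}/2d)^{\|x\|}$, so $A(x)\geq c_1e^{-c_0\|x\|}$; conversely any path to $\lfloor nx\rfloor$ has range $\geq n\|x\|-d$, so Step~1 gives $A(\lfloor nx\rfloor)\leq Ce^{-c_2n\|x\|}$. Hence $c_2\|x\|\leq\xi(x)\leq c_0\|x\|$: $\xi$ is finite and positive definite, and from $A(x)\geq A(y)A(x-y)\geq c_1e^{-c_0\|x-y\|}A(y)$ (so $|\log A(x)-\log A(y)|\leq c_0\|x-y\|+O(1)$) it inherits $|\xi(x)-\xi(y)|\leq c_0\|x-y\|$, hence is continuous on $\bbR^d$. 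Homogeneity $\xi(\lambda x)=|\lambda|\xi(x)$ holds along rational rays straight from the definition and extends by continuity; symmetry $\xi(x)=\xi(-x)$ follows from $A(x)=A(-x)$ (reverse the path and translate — both preserve $\Phi$ and $\sfp_d$); the triangle inequality $\xi(x+y)\leq\xi(x)+\xi(y)$ is Step~2 passed to the limit. Finally, for $x\in\bbZ^d$, $\xi(x)=\inf_n\bigl(-\tfrac1n\log A(nx)\bigr)\leq-\log A(x)$, which is \eqref{eq:Additive}.

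\smallskip\noindent\textbf{Main obstacle.} The real content, and the only place where attractivity and dimension enter, is the $d\leq2$ control of $\bbE_0[e^{-\nu R_n}]$: recurrence makes the range sublinear, so the potential cannot be discarded, and for small $\phi(1)$ the crude union bound over lattice animals diverges — the calibrated truncation at $r\asymp n^{d/(d+2)}$ is what saves it, and the same estimate is exactly what is needed to tame the return-loop corrections in the super-multiplicativity of $A$.
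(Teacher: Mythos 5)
Your proposal is correct in substance but reaches the key finiteness statement by a genuinely different, and considerably heavier, route than the paper. You organise the sum defining $A(x)$ by the length $n$ of the path and invoke the Donsker--Varadhan bound $\bbE\bigl[{\rm e}^{-\nu R_n}\bigr]\leq C{\rm e}^{-cn^{d/(d+2)}}$ (or its lattice-animal/Faber--Krahn proof) to make the sum over $n$ converge. The paper instead organises the sum by the \emph{spatial extent} of the path: paths that exit $\Lambda_{k\|x\|}$ but stay inside $\Lambda_{(k+1)\|x\|}$ have range at least $k\|x\|$, hence pay ${\rm e}^{-k\|x\|\phi(1)}$, while their total $\sfp_d$-mass is bounded by the confined Green function $G_{\Lambda_{(k+1)\|x\|}}(0,x)$, which is only polynomial (resp.\ logarithmic, resp.\ bounded) in $k\|x\|$ by \eqref{eq:GFLambda}; summing over $k$ is then immediate and also yields directly the exponential decay of $A(x)$ in $\|x\|$ that you re-derive separately in Step~3. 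Your route is valid, but you prove a strictly stronger temporal decay than is needed, at the price of the hardest estimate in the whole argument; the paper's spatial decomposition avoids Donsker--Varadhan entirely.

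Two further points. You are more explicit than the paper about the fact that naive super-multiplicativity of $A$ is obstructed by the non-injectivity of concatenation (the loops at the splitting point); the paper simply asserts it. Your fix via the first-passage quantity $F(x)$ is the right one, but your $d\leq 2$ argument for $F(x)\geq c\,A(x)$ is only a sketch, and it is not clear that $c$ can be taken uniform in $x$ there: the contribution of loops at $x$ confined to the already-visited set is of order $\log(\mathrm{diam})$ in $d=2$ (compare \eqref{eq:ThinReturns}), so you should expect at best a sub-exponentially degrading constant, which still suffices for the existence of $\xi$ and the norm property via Hammersley's approximate-subadditivity lemma. The one genuine gap is at the very end: having only $A(x+y)\geq c\,A(x)A(y)$ with $c<1$, the identity $\xi(x)=\inf_n\bigl(-\tfrac1n\log A(nx)\bigr)$ you invoke fails; Fekete applies to $-\log\bigl(c\,A(nx)\bigr)$ and yields only $A(x)\leq c^{-1}{\rm e}^{-\xi(x)}$, not \eqref{eq:Additive}. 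To obtain \eqref{eq:Additive} with constant exactly $1$ you need the constant-free inequality $A(nx)\geq A(x)^{n}$, which your Step~2 does not deliver (though the weaker version is all that is used later in the paper, where \eqref{eq:Additive} only ever enters inside a $\leqs$).
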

\begin{proof}
Since $A$ is super-multiplicative in the attractive case, it is sufficient to prove that $A(x)$ is finite. 
Let $\Lambda_k$ be the $d_1$-ball around the origin,
$$
\Lambda_k \df \setof{y\in\bbZ^d}{\| y\|< k}.$$
Let us split  the family of paths $\calA_x \df \setof{\gamma}{0\to x}$ as
\[
 \calA_x= \bigcup_k\calA_x^{(k)} ,
\]
where
\[
 \calA_x^{(k)} \df \setof{\gamma\in\calA_x}{\gamma\cap\Lambda_{k\|x\|}^c\neq\emptyset
\quad{\rm but}\quad
\gamma\cap\Lambda_{(k+1) \|x\| }^c =\emptyset}.
\]
For $\gamma\in \calA_x^{(k)}$, $\Phi (\gamma ) \geq k\|x\|\phi (1)$.  Accordingly, 
\[  
a (\gamma )  = {\rm e}^{-\Phi (\gamma )} \sfp_d (\gamma)
\leq {\rm e}^{- k\|x\|\phi (1)} \sfp_d (\gamma).
\]
For $\Lambda \subset \bbZ^d$, let $G_\Lambda $ be  the Green function of the simple random walk (SRW) with Dirichlet boundary conditions at the boundary of $\Lambda$:
\[
G_\Lambda(0,x) \df E_{\rm SRW} \bigl[ \sum_{n=0}^{\tau_\Lambda-1} \1_{\{S_n=x\}} \bigr],
\]
where $(S_n)_{n\geq 0}$ is a simple random walk starting at $0$, $E_{\rm SRW}$ the corresponding expectation, and $\tau_\Lambda = \inf\{n\geq 0\,:\, S_n\not\in\Lambda\}$.

\noindent
It follows that, see~\eqref{eq:GFLambda},
\begin{align}
A^{(k)} (x) &\df 
\sum_{\gamma\in\calA_x^{(k)}} a (\gamma )\leq {\rm e}^{- k\|x\|\phi (1)} G_{\Lambda_{(k+1)\|x\|}}
(0,x )\nonumber\\
&\leqs
\begin{cases}
 {\rm e}^{- k\|x\|\phi (1)} (k\|x\|)^2\qquad &\text{for $d=1$,}\\
{\rm e}^{- k\|x\|\phi (1)}\log(k\|x\|) &\text{for $d=2$,}\\
{\rm e}^{- k\|x\|\phi (1)} &\text{for $d\geq 3$,}
\end{cases}
\label{eq:Dlnotk}
\end{align}
uniformly in $x$ and $k$.
Consequently $A (x) = \sum_{k\geq 1} 
A^{(k)} (x)$ converges in any dimension (and is exponentially decreasing in $x$).
\qed
\end{proof}
\paragraph{The set $\Ka$ and related convex geometry}
\begin{figure}[t]
\begin{center}
\scalebox{.4}{\input{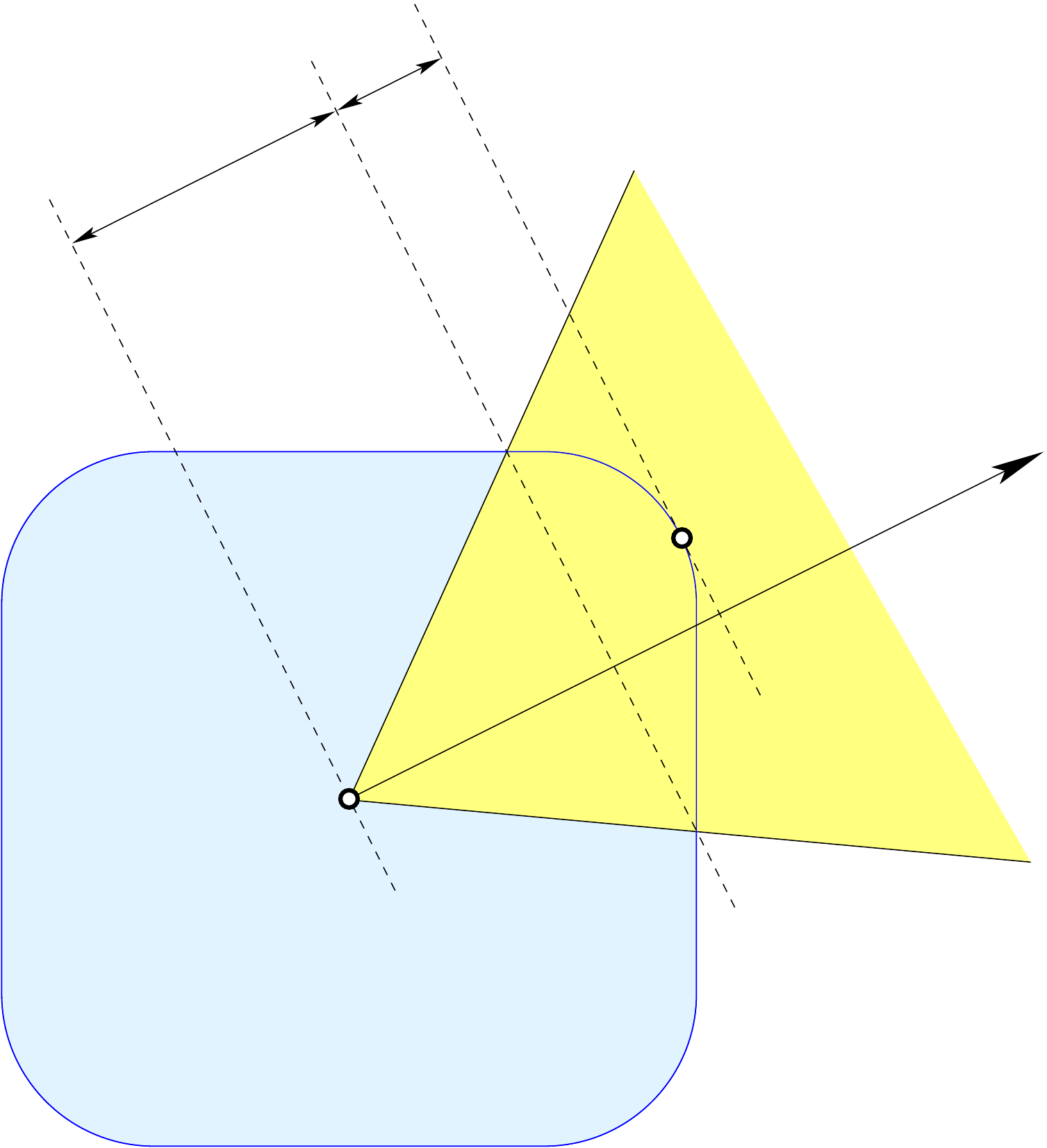_t}}
\end{center}
\caption{The point $x\in\partial\Ua$ conjugate to the vector $h$, and the cone $\calY_h$ for a given value of $\nu$.}
\label{fig:cone}
\end{figure}
Let us denote by $\UaK{r}(x)$ the ball of radius $r$ in the $\xi$-metric centered at $x$, and let us set $\UaK{r}\equiv\UaK{r}(0)$ and $\Ua\equiv \UaK{1}$.

The set (of sub-critical and critical 
drifts) $\Ka$ is defined through $\xi$: Namely, $\xi$ is the support function
of $\Ka$, 
\be 
\label{eq:support}
\xi (x) = \max_{h\in\partial\Ka} h\cdot x .
\ee
Equivalently,
\[
\Ka = \bigl\{ h\in\bbR^d \,:\, h\cdot x \leq \xi(x)\; \forall x\in\bbR^d \bigr\} = \bigl\{ h\in\bbR^d \,:\, h\cdot x \leq 1\; \forall x\in\Ua \bigr\}.
\]
Note that the unit ball $\Ua$ is the polar set of $\Ka$: $\Ua \df \setof{x}{x\cdot h\leq 1,\, \forall h\in\Ka}$.
In particular, if $h\in\partial \Ka$ and $x\neq 0$, then 
\be 
\label{eq:duality}
\xi (x) = h\cdot x \Longleftrightarrow \lbr\text{$h$ is orthogonal to a supporting  hyperplane to 
$\partial\UaK{\xi (x)}$
at $x$}\rbr .
\ee
When this is the case, we say that $h$ and $x$ are conjugate to each other.

Let us fix $\nu\in (0,1 )$. For $h\in\partial\Ka$, we introduce the forward cone, see Fig.~\ref{fig:cone},
\be
\label{eq:cone}
\calY_h \df \lbr x\in\bbR^d ~:~ \frs_h (x)\df \xi (x) -h\cdot x \leq \nu \xi (x)\rbr . 
\ee
The surcharge function $\frs_h$  plays a crucial role in quantifying the large scale behavior
of typical paths $\gamma$ under $\bbA_h^n$. Observe that, by definition, $\frs_h(y) \geq 0$ for all $y\in\bbR^d$, and $\frs_h(x)=0$ if and only if $x$ is conjugate to $h$. The choice of $\nu$ in~\eqref{eq:cone} corresponds to a
sufficiently small value of $\kappa$ in the following lemma.
\begin{lemma}
\label{lem:nu} 
For every $\kappa >0$, there exists $\nu {= \nu (\kappa )}>0$ such that, uniformly in $h\in\partial\Ka$:
\[
x,y\in \partial\Ua\cap\calY_h \implies x+\kappa y\notin \Ua.
\]
\end{lemma}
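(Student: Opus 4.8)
The plan is to reduce everything to the duality between the norm $\xi$ and the convex body $\Ka$ recorded in~\eqref{eq:support}. The key observation is that, since $h\in\partial\Ka\subset\Ka$, the description of $\Ka$ displayed just after~\eqref{eq:support} gives the linear lower bound $h\cdot z\le\xi(z)$ for every $z\in\bbR^d$; applied to $z=x+\kappa y$ this yields
\[
\xi(x+\kappa y)\ge h\cdot x+\kappa\,(h\cdot y).
\]

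Next I would unfold the two hypotheses. Membership $x,y\in\partial\Ua$ simply means $\xi(x)=\xi(y)=1$. Membership $x,y\in\calY_h$ means, by the definition~\eqref{eq:cone} of the surcharge $\frs_h$ together with $\frs_h\ge 0$, that $h\cdot x=\xi(x)-\frs_h(x)\ge(1-\nu)\xi(x)=1-\nu$, and likewise $h\cdot y\ge 1-\nu$. Feeding these into the previous estimate gives $\xi(x+\kappa y)\ge(1-\nu)+\kappa(1-\nu)=(1+\kappa)(1-\nu)$.

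It then suffices to choose $\nu=\nu(\kappa)>0$ small enough that $(1+\kappa)(1-\nu)>1$, i.e. any $\nu<\kappa/(1+\kappa)$ (say $\nu(\kappa)\df\kappa/\bigl(2(1+\kappa)\bigr)$). For such $\nu$ one obtains $\xi(x+\kappa y)>1$, that is $x+\kappa y\notin\Ua$, as required; and since the chosen $\nu$ depends only on $\kappa$, never on $h$, the implication is uniform in $h\in\partial\Ka$. There is essentially no serious obstacle here: the argument is just the linear bound $\xi\ge h\cdot(\cdot)$ combined with the quantitative cone condition, the only mild point being to take $\nu$ strictly below the threshold $\kappa/(1+\kappa)$ so that the final inequality is strict. (If one prefers a softer argument, one could alternatively run a compactness/continuity argument on the compact set $\partial\Ua$, but the explicit estimate above already delivers the uniform statement directly.)
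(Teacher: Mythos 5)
Your proof is correct and is essentially identical to the paper's one-line argument: both use the polar duality $\xi(x+\kappa y)\ge h\cdot(x+\kappa y)$ from~\eqref{eq:support}, the cone condition $h\cdot x\ge(1-\nu)\xi(x)$, and then choose $\nu$ so that $(1-\nu)(1+\kappa)>1$. Your explicit threshold $\nu<\kappa/(1+\kappa)$ just makes precise what the paper leaves implicit.
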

\begin{proof}
Just notice that, by~\eqref{eq:support},
$\xi (x+\kappa y ) \geq h\cdot (x+\kappa y) \geq (1-\nu)(\xi(x)+\kappa\xi(y)) = (1-\nu)(1+\kappa )$. 
\qed
\end{proof}
Lemma~\ref{lem:nu} is a generic statement which holds for any norm on 
$\bbR^d$. In Subsection~\ref{sub:geometry}, we prove a much stronger result about
critical $\xi$ and  $\Ka$:
\begin{bigthm}
\label{thm:B}
In any dimension $d\geq 2$, the assumptions (A) and (SL) imply that the boundary $\partial\Ka$ 
of the critical shape $\Ka$ is locally analytic and has a uniformly strictly positive Gaussian curvature.
\end{bigthm}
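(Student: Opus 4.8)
Fix $h\in\partial\Ka$ and, by~\eqref{eq:support}, pick $x\neq 0$ conjugate to $h$, so that $\xi(x)=h\cdot x$. The plan is to realise $\partial\Ka$, near $h$, as the zero set of an explicit real-analytic convex function coming from the renewal structure of the critical two-point function. Applying the coarse-graining of Section~\ref{sec:CG} (Theorem~\ref{thm:renewal}) in the direction $x$ produces a renewal representation of $A$ over its cone-point skeleton: up to boundary corrections, a path $0\to y$ decomposes into a concatenation of $\calY_h$-adapted \emph{irreducible} steps, with single-step weight $f_h(\cdot)$ and boundary weight $\psi_h(\cdot)$. The essential output of the coarse-graining is a strict \emph{mass gap}: there is $\varepsilon>0$ such that $\sum_y\mathrm{e}^{\xi(y)+\varepsilon\abs{y}}\bigl(f_h(y)+\psi_h(y)\bigr)$ is finite and bounded uniformly in $h\in\partial\Ka$.

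Consider then, for $t\in\bbC^d$,
\[
\hat f_h(t)\df\sum_{y}\mathrm{e}^{t\cdot y}f_h(y),\qquad\hat\psi_h(t)\df\sum_{y}\mathrm{e}^{t\cdot y}\psi_h(y).
\]
For $t$ in a complex neighbourhood $\calV_h$ of the real point $h$, of size uniform in $h$, one has $\mathrm{Re}(t\cdot y)\leq\xi(y)+\tfrac12\varepsilon\abs{y}$, so by the mass gap both series converge absolutely on $\calV_h$ and define holomorphic functions with uniformly bounded first and second derivatives; moreover the renewal identity $\sum_y\mathrm{e}^{t\cdot y}A(y)=\hat\psi_h(t)\big/(1-\hat f_h(t))$ holds on $\calV_h\cap\Ka$ (both sides being monotone limits of finite sums, with values in $[0,\infty]$). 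On $\mathrm{int}(\Ka)$ the left side is finite, which forces $\hat f_h<1$ there; and since $\sum_y\mathrm{e}^{t\cdot y}A(y)=+\infty$ for $t\notin\Ka$ (by~\eqref{eq:Additive} and Fekete's lemma applied along a lattice ray), while $\hat\psi_h$ stays finite and $\hat f_h$ analytic across $\partial\Ka$, we must have $\hat f_h(h)=1$. Hence, locally near $h$,
\[
\partial\Ka=\setof{t\in\calV_h}{\hat f_h(t)=1}.
\]
In particular $\tilde\mu_h(y)\df\mathrm{e}^{h\cdot y}f_h(y)$ is a probability measure on $\Zd$, supported on $\setof{y}{h\cdot y>0}$, so $\nabla\hat f_h(h)=\bbE_{\tilde\mu_h}[Y]$ is a non-zero vector. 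The analytic implicit function theorem then shows that $\partial\Ka$ is, near $h$, the graph of a real-analytic function; letting $h$ range over $\partial\Ka$ gives local analyticity everywhere.

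For the curvature, $\hat f_h$ is convex on $\calV_h\cap\Rd$, and the Gaussian curvature of $\setof{t}{\hat f_h(t)=1}$ at $h$ equals $\abs{\nabla\hat f_h(h)}^{-(d-1)}\det\bigl(\nabla^2\hat f_h(h)\big|_{T}\bigr)$, where $T=\nabla\hat f_h(h)^{\perp}$ and $\nabla^2\hat f_h(h)=\bbE_{\tilde\mu_h}[YY^{\mathrm T}]$. Thus it is enough to bound $\abs{\nabla\hat f_h(h)}$ from above — immediate from the uniform mass gap — and $\bbE_{\tilde\mu_h}\bigl[(u\cdot Y)^2\bigr]$ from below, uniformly over $h\in\partial\Ka$ and over unit vectors $u$. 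This lower bound is a genuine $d\geq 2$ statement: by locally \emph{bending} an irreducible piece between its two cone-points one produces, with $\tilde\mu_h$-probability bounded below and with bounded displacement, vectors $y^{(1)},\dots,y^{(d)}$ spanning $\Rd$; since the cone $\calY_h$ has fixed opening and rotates continuously with $h$ over the compact set $\partial\Ka$, this surgery can be carried out with constants independent of $h$. Hence $\bbE_{\tilde\mu_h}[YY^{\mathrm T}]$ is uniformly positive definite, and the Gaussian curvature of $\partial\Ka$ is uniformly bounded below.

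The main obstacle is this last point — the \emph{uniform} non-degeneracy of the covariance of an irreducible step on the tangent hyperplane. It relies on two uniform-in-$h$ inputs: the Ornstein--Zernike/renewal estimates of Section~\ref{sec:CG} (the mass gap and the regularity of $\hat f_h$), which is why the coarse-graining must be performed with constants independent of the conjugate direction, and a quantitative transversal-fluctuation lower bound showing that the irreducible pieces genuinely explore all $d-1$ directions orthogonal to the conjugate ray. Granting these, the analyticity is a routine application of the analytic implicit function theorem and the curvature statement is standard convex differential geometry.
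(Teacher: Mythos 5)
Your proposal is correct and follows essentially the same route as the paper: the renewal representation of Theorem~\ref{thm:renewal} makes the tilted irreducible-step weights a probability measure $\bbP_h$ with exponential tails \eqref{eq:Xdecay}, $\partial\Ka$ is locally the level set $\bbE_h\,{\rm e}^{f\cdot\sfX}=1$ of its Laplace transform (the paper imports this as Lemma~4.1 of \cite{IoffeVelenik-Annealed}, which is exactly your generating-function identity $\hat f_h=1$), and analyticity plus positive curvature follow from the analytic implicit function theorem and the non-degeneracy of the second-moment matrix of $\sfX$, which the paper deduces from $\tilde\calY_h$ being a proper cone with non-empty interior. Your write-up merely makes explicit the uniformity in $h\in\partial\Ka$ that the paper leaves implicit in its references.
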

\paragraph{Conjugate directions and crossing random walks} 
Let $d\geq 2$ and  $x\in\bbZ^d$. The ensemble of crossing random walks is given by the probability measure
\be 
\label{eq:Ax}
\bbA_x (\gamma ) \df \frac{a(\gamma )}{A (x) }\1_{\lbr \sfX (\gamma )= x\rbr } .
\ee
This ensemble is conjugate to~\eqref{eq:An} through the relation $\xi (x) = h\cdot x$, which, 
by Theorem~\ref{thm:B}, is unambiguous. That is, for any $x\in\bbR^d\setminus \{0\}$, there exists a
unique $h\in\partial \Ka$ such that $\xi (x) = h\cdot x$.
When $x$ is conjugate to $h$ and such that $\abs{x}/n = v(h)$, there is equivalence (in the sense of statistical mechanics) between the two ensembles $\bbA_h^n$ and $\bbA_x$.
\begin{bigthm}
\label{thm:C}
Let $\abs{x_m}\to\infty$ in  such a fashion that $x_m /\abs{x_m}\to g\in\bbS^{d-1}$. Let $h\in\partial\Ka$
be the conjugate vector to $g$. Then,
\begin{equation}
\label{eq:C}
\lim_{m\to\infty}\frac1{\abs{x_m }}\bbA_{x_m}(\sfT)  = \frac1{\abs{v (h )}} .
\end{equation}
Moreover, both the LLN and the CLT hold. In particular,  there exists $\sigma^2 = \sigma ^2 (h) >0$, such that, under $\lbr\bbA_{x_m}\rbr$, 
\be 
\label{eq:TCLT}
\frac{1}{\sqrt{\abs{x_m }}}\bigl( \sfT - \bbA_{x_m}(\sfT)\bigr) \Rightarrow \calN (0, \sigma^2 ) .
\ee
\end{bigthm}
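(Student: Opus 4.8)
The plan is to deduce Theorem~\ref{thm:C} from the finite-scale renewal description of typical critical polymers (Theorem~\ref{thm:renewal}) together with the local-limit/Gaussian information about the correlation-length functional encoded in Theorems~\ref{thm:A} and~\ref{thm:B}. The guiding principle is that of \emph{ensemble equivalence}: the crossing ensemble $\bbA_{x_m}$ conditions the displacement to equal $x_m$, while $\bbA_h^n$ conditions the length to equal $n$; since $\sfT$ and $\sfX$ are conjugate thermodynamic variables, fixing one and summing over the other should, at the level of fluctuations, produce the same law up to a Legendre change of variables. Concretely, I would write, for the conjugate pair $(h,g)$ with $h\in\partial\Ka$ and $g=x_m/\abs{x_m}$,
\[
A(x_m) \;=\; \sum_{\gamma:0\to x_m} a(\gamma) \;=\; \sum_{n} {\rm e}^{\lambda n}\, A_h^n \cdot {\rm e}^{-h\cdot x_m}\,\Prob_h^n\bigl(\sfX=x_m\bigr)\,{\rm e}^{h\cdot x_m - \lambda n + \cdots},
\]
i.e.\ express $\bbA_{x_m}(\sfT=n)$ in terms of $A_h^n$ and the local probability $\bbA_h^n(\sfX=x_m)$, and then transfer the known LLN/CLT for $\sfX$ under $\bbA_h^n$ into an LLN/CLT for $\sfT$ under $\bbA_{x_m}$.

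The main steps, in order, are as follows. First, from Theorem~\ref{thm:renewal} I would extract a random-walk (renewal) representation of the crossing ensemble: under $\bbA_{x_m}$ the path decomposes into an irreducible sequence of i.i.d.\ (up to boundary effects) coarse-grained increments whose displacements sum to $x_m$ and whose individual lengths $\sfT_1,\sfT_2,\ldots$ have exponential tails. Second, the key identity $\xi(x)=h\cdot x$ forces the tilted increment distribution to be centered along $g$, so that the number $N_m$ of renewal blocks satisfies $N_m \simeq \abs{x_m}/\ell^*$ for a deterministic $\ell^*>0$, and $\sfT=\sum_{j\le N_m}\sfT_j$ is (asymptotically) a sum of i.i.d.\ terms over a number of terms proportional to $\abs{x_m}$. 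Third, identify the constant: by ensemble equivalence the per-step mean length must be the reciprocal of the per-step mean displacement of the conjugate time-fixed ensemble, giving $\ell^* \cdot$ (speed) $=1$, hence $\lim \abs{x_m}^{-1}\bbA_{x_m}(\sfT)=1/\abs{v(h)}$, which is~\eqref{eq:C}. Fourth, apply the classical CLT for i.i.d.\ sums (with a random, concentrated number of summands — an Anscombe-type argument, or directly controlling $N_m$ via its own concentration) to obtain~\eqref{eq:TCLT} with $\sigma^2(h)$ the renewal-theory variance; positivity of $\sigma^2$ follows because the block-length distribution is genuinely non-degenerate (it charges at least two values). The summable LLN~\eqref{eq:LLN}-type statement claimed in Theorem~\ref{thm:C} follows from the exponential tails of $\sfT_j$ via a standard Borel--Cantelli / large-deviation bound for the i.i.d.\ sum.

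The step I expect to be the main obstacle is the rigorous passage from the coarse-grained renewal structure of Theorem~\ref{thm:renewal} to a clean i.i.d.\ sum: the decomposition produced by the coarse-graining is only \emph{approximately} i.i.d.\ (there are irreducible-backbone constraints, boundary blocks near $0$ and near $x_m$, and a small density of "bad" blocks), and one must show these corrections are $\smo{\sqrt{\abs{x_m}}}$ so that they do not contaminate the CLT scaling. This is where the uniform curvature of $\partial\Ka$ from Theorem~\ref{thm:B} is essential: it guarantees that the surcharge $\frs_h$ grows quadratically transversally to $g$, so that blocks straying outside the cone $\calY_h$ are exponentially penalized uniformly in $h$, which both localizes the backbone and makes the renewal increments genuinely integrable with the required uniformity. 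A secondary technical point is matching the two covariance structures — verifying that the $\sigma^2(h)$ obtained from the crossing-ensemble renewal equals the one dual to $\Sigma(h)$ of Theorem~\ref{thm:A} — but once the renewal picture is in place this is a finite-dimensional Legendre/Gaussian computation rather than a genuine difficulty.
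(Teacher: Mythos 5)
Your plan matches the paper's: Theorem~\ref{thm:C} is deduced by applying the irreducible/renewal decomposition~\eqref{eq:renewal} directly to paths $0\to x_m$ (the paper notes this is in fact \emph{simpler} than Theorem~\ref{thm:A}, since no short-walk estimate like~\eqref{eq:two-sums} is needed), after which the LLN and CLT for $\sfT$ follow from the same $(d+1)$-dimensional renewal theory (Stam's theorem) used in Subsection~\ref{sub:CLT} — equivalent to the Anscombe-type route you sketch. One conceptual correction worth making: Theorem~\ref{thm:B} is a \emph{consequence} of the renewal structure, not a prerequisite for it — the exponential cone-point bound~\eqref{eq:cone-points} is obtained via the coarse-graining of Section~\ref{sec:CG} without any curvature input, and what Theorem~\ref{thm:B} actually supplies to Theorem~\ref{thm:C} is the uniqueness of the conjugate drift $h$ to the direction $g$, rather than the exponential suppression of off-cone blocks you attribute to it.
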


\subsection*{Acknowledgments} We are very grateful to anonymous referees for a very careful reading of our text and for making several suggestions that have certainly improved its readability.

\section{Critical Coarse-Graining}
\label{sec:CG}
Let us try to explain the difference between the super-critical case $h\not\in\Ka$ and 
the critical case $h\in\partial \Ka$. If $h\not\in\Ka$, then there exists $\lambda >0$ 
(see~\cite{IoffeVelenik-Annealed}) such that
the weights $a_h$ are closely related to the weights $a_\lambda$. In fact $\lambda$ is an 
appropriately chosen parameter in the conjugate ensemble of crossing random walks.  This means that
the model at super-critical drifts $h$ is essentially massive. Existence of positive mass or, 
in terms of random walks, strictly positive killing potential, has a huge impact on the 
corresponding 
sample path properties and, certainly, facilitates proofs and arguments. 

The critical case corresponds to $\lambda =0$, and the  main thrust of 
Proposition~\ref{prop:skeletons} below is to enable a control of local time geometry of
random walks at zero mass using only  positivity of  sub-linear interaction potentials. 
Subsection~\ref{sub:CG-micro} studies implications for the large scale renewal structure of 
microscopic polymers, as summarized in Theorem~\ref{thm:renewal}. 
\subsection{Decorated skeletons} 
\label{sub:deco}
The decorated skeleton $\hat{\gamma}_K$ of a path $\gamma$ will be  eventually defined in~\eqref{eq:gammaK}. It consists of a trunk, hairs and attached dirty boxes. The whole construction relies on a scale parameter $K$ that will be fixed (large enough, but independently of $\abs{x}$) later on. The construction involves several steps.

\paragraph{Trunks and Pre-hairs} 
Fix $h\in\partial \Ka$.  
Our construction is a modification of the coarse-graining employed in~\cite{IoffeVelenik-Annealed}: As in the super-critical
case, we represent a path $\gamma$ as a concatenation,
\be 
\label{gleta}
 \gamma\, =\, \gamma_0\cup\eta_0\cup\gamma_1\cup\dots\cup \eta_{m -1}\cup \gamma_m .
\ee
Let us recall the corresponding construction (see Fig.~\ref{fig:coarsegraining}):  We start by defining the trunk $\frt_K$ of $\gamma$.

\smallskip
\par
\begingroup
\leftskip3em
\rightskip1em
\rightskip\leftskip
\noindent
\Step{ 0}. Set  $u_0 =0$, $\tau_0 = 0$ and $\frt_K =\lbr u_0\rbr$.  Go to
\Step{ 1}. 
\smallskip

\noindent
\Step{ $\boldsymbol{\ell+1}$}.
If $\lb\gamma (\tau_\ell ) ,\dots \gamma (n)\rb\subseteq \UaK{K} (u_\ell  )$, then 
set $\sigma_{\ell } = n$ and stop.
Otherwise, define
\begin{align*}
&\sigma_{\ell } = \min\lbr i >\tau_\ell ~:~ 
\gamma (i)\not\in \UaK{K} (u_\ell  )\rbr\\
&\qquad\text{ and}\\
&\tau_{\ell +1} = 1+ \max 
\lbr i >\tau_\ell ~:~ 
\gamma (i) \in \UaK{K} (u_\ell )\rbr .
\end{align*}
Set $v_{\ell} = \gamma (\sigma_{\ell})$ and $u_{\ell +1} = \gamma (\tau_{\ell+1})$. 
Update $\frt_K = \frt_K\cup\lbr u_{\ell +1}\rbr$ and go to \Step{$\boldsymbol{\ell+2}$}. \hfill$\blacksquare$
\par
\endgroup

\medskip
\noindent
Apart from producing $\frt_K$ the above algorithm leads to a decomposition
of $\gamma$ as in~\eqref{gleta} with
\[
\gamma_{\ell }\, =\, \lb \gamma(\tau_{\ell } ),\dots ,\gamma (\sigma_{\ell } )\rb\quad
\text{and}\quad \eta_{\ell} = \lb \gamma (\sigma_{\ell}),\dots ,\gamma (\tau_{\ell +1} )\rb .
\]
By construction, $\gamma_{j}\cap  \UaK{K} (u_\ell ) = \emptyset$ and 
$\eta_{j}\cap  \UaK{K} (u_\ell ) = \emptyset$ for any $j >\ell$. Moreover, the trunk
$\frt_K = \lbr u_0 , \dots ,u_m\rbr$ is well separated in the following sense: 
There exists $c \in (0,\infty )$
such that $d_1 ( u_j ,u_\ell )\geq c^{-1} K$ for all $\ell\neq j$ and 
$d_1 (u_\ell ,u_{\ell+1} )\leq cK$ for all $\ell$. 

The disjointness of the paths $\gamma_i$ imply that $\Phi(\gamma)\geq \sum_i \Phi(\gamma_i)$. This will be crucial in the sequel and will allow a rather straightforward control of these paths. The paths $\eta_i$, on the other hand, are a nuisance and their control is the main difficulty in the critical case.

The hairs $\frh_K$ of the decorated skeleton $\hat{\gamma}_K$, which will be introduced below, take into account those paths $\eta_\ell$ which are long on the scale $K$; their construction relies on that of pre-hairs, to which we turn now. Recall that $\eta_\ell : v_\ell \mapsto u_{\ell+1}$.
Since we are eventually going to fix $u_\ell$-s and not $v_\ell$-s 
it is more convenient to think about $\eta_\ell$ as of a reversed path from $u_{\ell +1}$ to $v_\ell$. 
Then the $\ell$-th \emph{pre}-hair $\tilde \frh_K^\ell$ of $\gamma$ is constructed as follows:
If $\eta_\ell\subseteq \UaK{K} (u_{\ell+1})$ then $\tilde \frh_K^\ell =\emptyset$. Otherwise, 
construct the polygonal approximation $\tilde \frh_K^\ell$  to $\eta_\ell$ 
following exactly the same rules as in 
the construction of  $K$-skeletons in the repulsive case in~\cite{IoffeVelenik-Annealed}. Namely, for any path $\eta = \lb \eta (0), \dots , \eta (m)\rb $ such that $\eta\not\subseteq \UaK{K} (\eta (0))$, we define (see Fig.~\ref{fig:coarsegraining}):

\smallskip
\par
\begingroup
\leftskip3em
\rightskip1em
\rightskip\leftskip
\noindent
\Step{0}. Set $z_0=\eta (0 )$, $\tau_0 = 0$ and $\tilde\frh_K ={\emptyset}$. Go
to \Step{1}. 
\smallskip

\noindent
\Step{$\boldsymbol{r+1}$}.  If $\lb \eta  (\tau_r ),\dots ,\eta  (m)\rb \subseteq \UaK{K} (z_r  )$,
then stop. Otherwise set
\[
\tau_{r+1}\, =\, \min\lbr j> \tau_r~:~ \eta  (j)\not\in \UaK{K} (z_r )\rbr .
\]
Define $z_{r+1} = \eta  (\tau_{r+1} )$, update 
$\tilde\frh_K = \tilde\frh_K \cup \lbr z_{r+1}\rbr$ and go to 
\Step{$\boldsymbol{r+2}$}. \hfill$\blacksquare$
\par
\endgroup
\begin{figure}[t]
\begin{center}
\scalebox{0.8}{\input{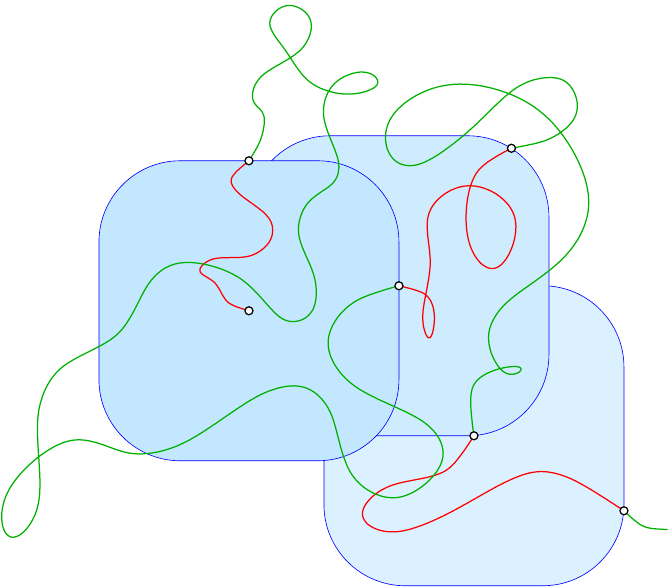_t}}
\hspace*{4mm}
\scalebox{0.8}{\input{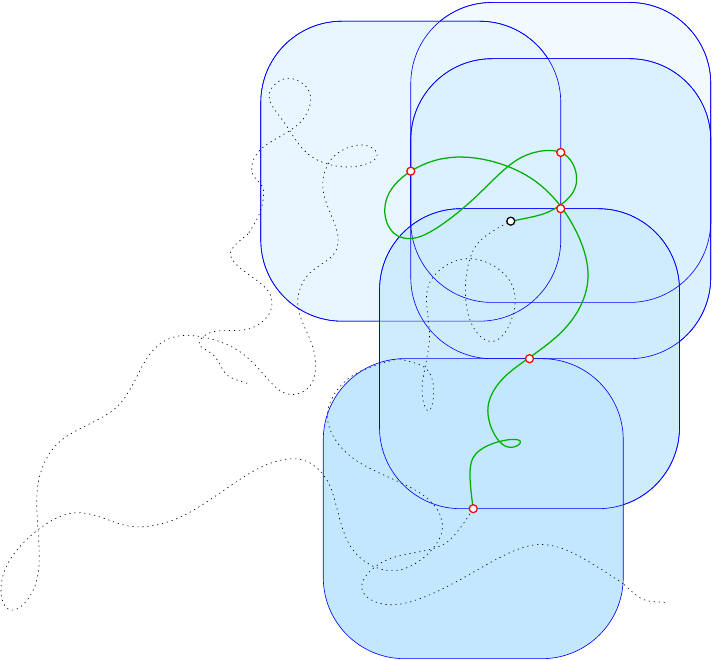_t}}
\end{center}
\caption{\textbf{Left:} construction of the vertices $u_k,v_k$ associated to a path
$\gamma$ (only the beginning of the path is shown). Informally, $v_k$ is the first vertex of the path after $u_k$ lying outside of $\UaK{K}(u_k)$ and $u_{k+1}$ is the vertex following the last visit of $\gamma$ inside $\UaK{K}(u_k)$. \textbf{Right:} construction of the pre-hair $\tilde \frh_K^1= \lbr z_1, z_2, z_3 ,z_4\rbr$ associated to the reversed path $\eta_1:u_2\to v_1$.}
\label{fig:coarsegraining}
\end{figure}

\paragraph{Basic Estimate at Criticality} 
At this stage we need to proceed with more care than in the case of super-critical drifts considered in~\cite{IoffeVelenik-Annealed}: There, the paths $\eta_\ell$ were controlled by comparison with a simple random 
walk killed at rate $\lambda$, or, in other words, the lifetimes were geometric with $p= 1- e^{-\lambda}$. This would not work in the critical case of $\lambda =0$. Indeed, for $z\in\Lpartial \UaK{K}$, we have, see~\eqref{eq:EssentiallyUniformExitProb},
\be  
\label{eq:ExitBall}
\sumtwo{\eta :0\to z}{\mathring{\eta}\subseteq \UaK{K}} \sfp_d  (\eta )
\simeq \frac{1}{K^{d-1}} .
\ee
In order to recover exponential decay, we need to tighten control over the range of the various paths in
question. Morally, the idea is to decompose such a path into three types of pieces: those that remain close to the trunk (and thus suffer an entropic loss), those that wander far away from the trunk but have a small range (and thus also suffer from entropic loss), and those that wander far away from the trunk and have a fat range (and thus pay a high energetic price).

\medskip
Given a path $\eta$, let us denote its range by $R[\eta ] \df \setof{z}{z= \eta (j)\ \text{for some $j$}}$. The control of pieces with small range is done through the following lemma.
\begin{lemma}
 \label{lem:range}
For every $c_1 >0$ there exists $c_2 >0$ such that
\be  
\label{eq:range}
\sumtwo{\eta :0\to z}{\mathring{\eta}\subseteq \UaK{K}} 
\sfp_d (\eta )
\1_{\lbr\abs{R[\eta ]}\leq c_1 K \rbr} \leq {\rm e}^{-c_2 K} ,
\ee
for all large enough $K$ and all $z\in\Lpartial \UaK{K}$.  
\end{lemma}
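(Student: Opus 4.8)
The plan is to exploit the tension between two costs: a path $\eta$ that stays inside $\UaK{K}$, starts at $0$, ends at a boundary point $z\in\Lpartial\UaK{K}$, and has a small range $|R[\eta]|\le c_1 K$ is forced to be long (it must travel a $\xi$-distance of order $K$ while confined to a set of at most $c_1 K$ sites), and each unit of length carries an entropic factor. Concretely, fix the range $R$ with $|R|\le c_1 K$; the number of such subsets of $\UaK{K}$ containing $0$ and $z$ is at most $\binom{|\UaK{K}|}{c_1 K}\le (CK^d)^{c_1 K}$ for a dimensional constant $C$. Given $R$, the $\sfp_d$-mass of all nearest-neighbor paths $0\to z$ with range exactly $R$ is bounded by the $\sfp_d$-mass of all paths from $0$ to $z$ that remain in $R$; since $R$ is a connected set of at most $c_1 K$ vertices, standard simple-random-walk estimates (the exponential decay of the return/confinement probability, e.g.\ via the spectral gap of the SRW generator on $R$, which is bounded below by $c/|R|^2\ge c/(c_1K)^2$) give a bound of the form $\sum_{t}\sfp_d(\eta) \le C' |R|^2$, i.e.\ only polynomial in $K$; this is the analogue of~\eqref{eq:GFLambda} inside $R$. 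That alone is not enough, so the key extra input is that $\eta$ must actually \emph{leave} $\UaK{K/2}$, say — indeed $z\in\Lpartial\UaK{K}$ — hence it must cross the $\xi$-annulus $\UaK{K}\setminus\UaK{K/2}$ while confined to at most $c_1 K$ sites.

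The main step is to convert "confined to $\le c_1 K$ sites but must span a $\xi$-distance $\gtrsim K$" into genuine exponential smallness. I would do this by a telescoping/surgery argument along shells: partition $\UaK{K}$ into $\simeq K/\ell_0$ concentric $\xi$-shells of width $\ell_0$ (with $\ell_0$ a large constant depending on $c_1$). The range $R$ meets the shells, but since $|R|\le c_1 K$, at least a positive fraction of the shells are each met by $R$ in at most $O(\ell_0)$ sites — call these \emph{thin} shells; there are $\gtrsim K/\ell_0$ of them. Every time $\eta$ crosses a thin shell it must pass through a bottleneck of $O(\ell_0)$ sites, and the SRW probability of crossing a fixed annulus of width $\ell_0$ through a prescribed set of $O(\ell_0)$ sites is at most $1-\delta(\ell_0)<1$ for some $\delta(\ell_0)>0$ (again by an elementary SRW estimate on the annulus — see Appendix~\ref{app:RW}). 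Multiplying over the $\gtrsim K/\ell_0$ independent crossings — more precisely, iterating a one-step conditional bound using the strong Markov property — yields $\sum_{\eta}\sfp_d(\eta)\1_{\{R[\eta]=R\}}\le (1-\delta)^{c_3 K/\ell_0}$. Summing over the $\le (CK^d)^{c_1 K}$ choices of $R$ costs a factor ${\rm e}^{c_1 K\log(CK^d)}$, which is ${\rm e}^{\smo{K}}$-negligible relative to ${\rm e}^{-c_4 K}$ provided we first choose $\ell_0$ large enough that $c_3\log(1/(1-\delta(\ell_0)))/\ell_0 > 2 c_1 d$; then set $c_2 = c_3\log(1/(1-\delta))/\ell_0 - c_1 d$, say, and absorb lower-order terms for $K$ large.

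The step I expect to be the main obstacle is making the shell-crossing bound uniform and genuinely multiplicative: one must handle the fact that the thin shells are determined by $R$, hence by $\eta$ itself, so the "independent crossings" heuristic has to be implemented carefully via a deterministic counting over $R$ followed by a Markovian iteration on paths with that fixed range — one cannot condition on a random $R$. The clean way is: fix $R$ first (this fixes which shells are thin), then bound $\sum_{\eta\subseteq R,\,0\to z}\sfp_d(\eta)$ by running the SRW confined to $R$ and reading off, successively, the conditional probability of each next thin-shell crossing given the past; each such factor is $\le 1-\delta(\ell_0)$ by a worst-case annulus-crossing estimate that is uniform over the starting point in the shell and over the admissible bottleneck set of size $O(\ell_0)$. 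The remaining points — uniformity in $z\in\Lpartial\UaK{K}$ and in $K$ large, and the bookkeeping that $\mathring\eta\subseteq\UaK{K}$ rather than $\eta\subseteq\UaK{K}$ — are routine, since $z$ is a single boundary site and only affects the last crossing, and the endpoint contributes a harmless factor $\frac{1}{2d}$.
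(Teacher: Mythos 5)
Your proposal heads in a workable direction, but as written it has a gap that is fatal and a second step that needs to be made quantitative rather than qualitative.

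The fatal gap is the entropy count. You bound the number of admissible ranges $R$ by $\binom{|\UaK{K}|}{c_1 K}\le (CK^d)^{c_1K}$ and then assert that the resulting factor ${\rm e}^{c_1 K\log(CK^d)}$ is ${\rm e}^{\smo{K}}$. It is not: $c_1 K\log(CK^d)=c_1 d\,K\log K + O(K)$, so this factor is ${\rm e}^{\Theta(K\log K)}$ and grows super-exponentially. No fixed choice of $\ell_0$ can make the per-shell gain $(1-\delta(\ell_0))^{c_3 K/\ell_0}={\rm e}^{-c_4 K}$ beat a factor ${\rm e}^{\Theta(K\log K)}$, so the final inequality does not close as $K\to\infty$. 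The fix is to use that $R[\eta]$ is the range of a nearest-neighbour path starting at $0$, hence a \emph{connected} subset of $\bbZ^d$ containing $0$ (a lattice animal); the number of such sets of size $\le c_1 K$ is $\le \lambda_d^{\,c_1 K}$ for a dimensional constant $\lambda_d$, i.e.\ ${\rm e}^{O(K)}$. Without this observation the union bound over $R$ is too crude to ever work.

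Even after that repair, the per-shell estimate needs to be much stronger than what you state. With the corrected entropy rate $c_1\log\lambda_d$ per unit of $|R|$, each thin shell of width $\ell_0$ contributes entropy $\sim c_1\ell_0\log\lambda_d$, so to win you need the confined crossing probability of a thin shell to be $\le {\rm e}^{-c\,\ell_0}$ with $c>c_1\log\lambda_d$, not merely $\le 1-\delta(\ell_0)$ with $\delta$ bounded away from $0$. As stated, ``the SRW probability of crossing a fixed annulus of width $\ell_0$ through a prescribed set of $O(\ell_0)$ sites is at most $1-\delta<1$'' is not a standard estimate, and it does not appear in Appendix~\ref{app:RW}; moreover the event includes arbitrarily many re-entries into the shell, so a single-attempt bound is not enough. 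A confined-walk survival estimate of the required exponential strength is plausible but would itself need a careful proof (and is essentially as hard as the lemma you are trying to prove).

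For comparison, the paper avoids all of this by not summing over ranges at all. It applies an exponential Chebyshev bound, $P_{\rm SRW}(|R_K|\le c_1K)\le {\rm e}^{c_1\nu K}\,E_{\rm SRW}{\rm e}^{-\nu|R_K|}$, and then controls the Laplace transform of the range. The base estimate, at a fixed scale $N$, follows from a first-order Taylor expansion in $\nu$ together with the elementary lower bound $E_{\rm SRW}|R_{2N}\cap\UaK{N}|\gtrsim N^2/\log N$ (see~\eqref{eq:LowerBoundRange}); the iteration to the scale $K$ uses the strong Markov property across $\sim K/N$ disjoint balls $\UaK{N}(\eta(\sigma_\ell))$, whose range contributions are disjoint, so that $E_{\rm SRW}{\rm e}^{-\nu|R_K|}$ factorises. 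No enumeration of ranges, no shell bottleneck estimate, and no conditioning on a random set are needed. If you want to pursue your route, you should replace the binomial count by the lattice-animal bound and replace the $1-\delta$ shell estimate by an explicit exponentially small confined-crossing estimate; but the Chernoff/MGF argument is substantially shorter and cleaner.
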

\begin{proof}
The worst case is when the dimension $d=2$. We treat all the dimensions 
simultaneously and, consequently, 
some of the estimates we obtain
may be further improved  in the transient case $d\geq 3$.
 
Let $\tau_K$ be the first exit time of a simple random walk from $\UaK{K}$. Set
$R_K \df R\left[ \eta[0, \tau_K ]\right]$.  We claim that, for any $C>0$, it is possible to find
$\nu >0$ such that 
\be  
\label{eq:RnuBound}
E_{\rm SRW} {\rm e}^{-\nu \abs{R_K }} \leq {\rm e}^{-C\nu K} ,
\ee
for all sufficiently large scale $K$. Note that~\eqref{eq:RnuBound} instantly implies the claim of the
lemma. Indeed, the left-hand side of~\eqref{eq:range}
is bounded above by
$P_{\rm SRW}\lb \abs{R_K} \leq c_1 K\rb$. However, in view of~\eqref{eq:RnuBound},
\[
P_{\rm SRW}\lb \abs{R_K} \leq c_1 K\rb 
\leq {\rm e}^{c_1\nu K}E_{\rm SRW} {\rm e}^{-\nu \abs{R_K }} \leq {\rm e}^{-(C -c_1 )\nu K}.
\]
It remains to verify~\eqref{eq:RnuBound}. This will be done in two steps. First of all, we claim that
for any $C^\prime>0$ it is possible to find $\nu >0$ and a scale $N$ such that 
\be 
\label{eq:HalfBound}
 E_{{\rm SRW}} {\rm e}^{-\nu \abs{R_{2N}\cap\UaK{N} }} \leq {\rm e}^{-C^\prime\nu N} .
\ee
Indeed, expanding both sides in a Taylor series in $\nu$ at fixed $N$, and using the inequality $E_{\rm SRW}^0 \abs{R_{2N}\cap\UaK{N}}\geq c_3N^2/\log N$ (see~\eqref{eq:LowerBoundRange}), \eqref{eq:HalfBound} follows as soon as we choose $\nu$ sufficiently small (depending on $N$ and $C'$).

\begin{figure}[t]
\begin{center}
\scalebox{0.5}{\input{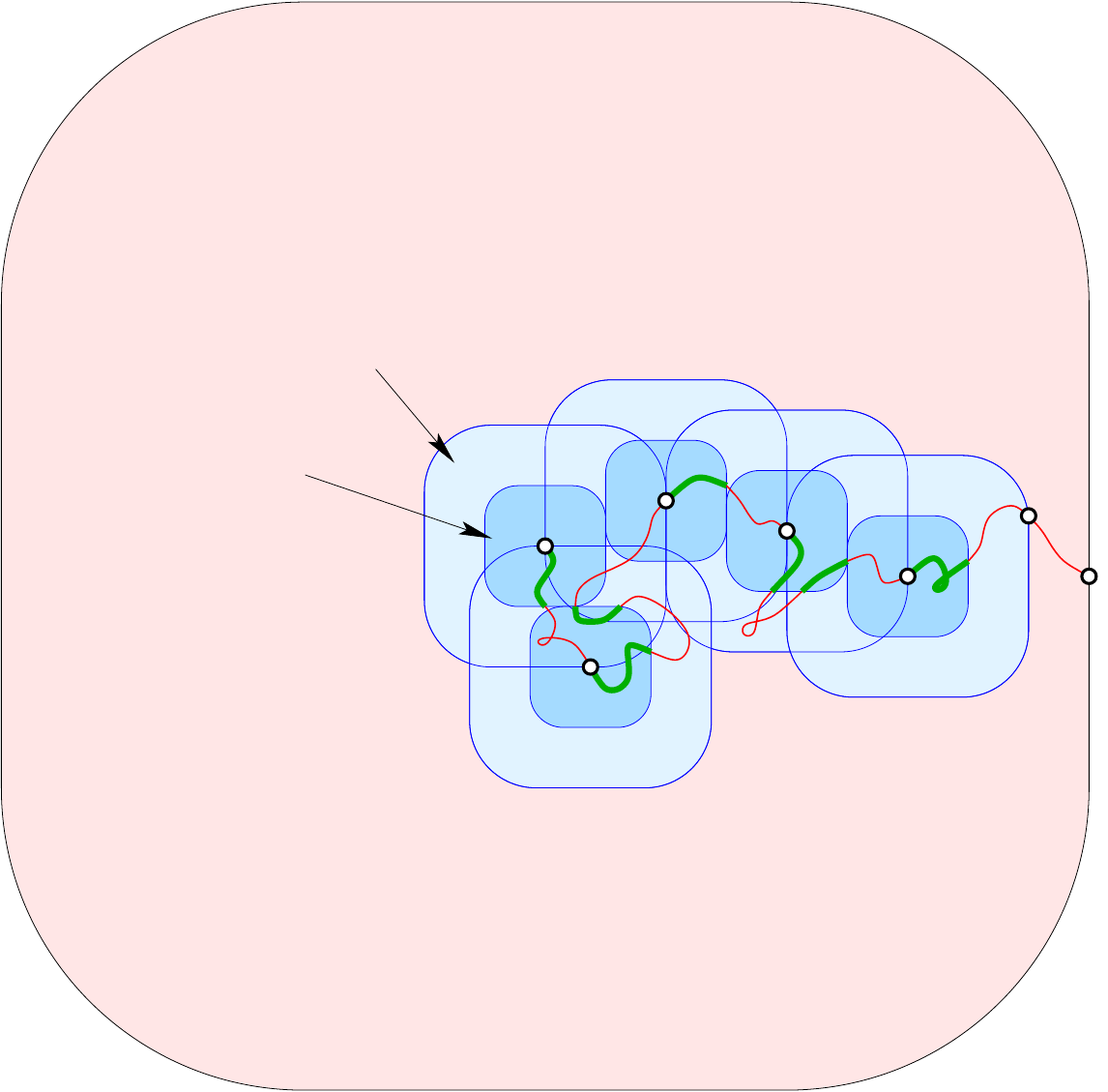_t}}
\end{center}
\caption{The decomposition of the path $\eta :0\to z$ in the proof of Lemma~\ref{lem:range}. Observe that, by construction, the bold pieces of the path  belonging to different balls  are disjoint.}
\label{fig:exit}
\end{figure}
Our next step is to iterate~\eqref{eq:HalfBound} to all large scales; see Fig.~\ref{fig:exit}. Let $K >N$. Consider the following
sequence of stopping times: $\sigma_0 = 0$ and 
\[
 \sigma_n =\min\Bigl\{t >\sigma_{n-1}\,:\,\eta (t )
\not\in\bigcup_{\ell=0}^{n-1} \UaK{2N} \lb \eta (\sigma_\ell )\rb\Bigr\}.
\]
Then $\sigma_n \leq \tau_K$ as long as $n \leq K/2 (N+\varsigma)$,  where 
\[
\varsigma\df \sup_N\max_{y\in\Lpartial \UaK{N}}\lb \xi (y) -N\rb . 
\]
Let $\theta_{\sigma_\ell}\eta $ be the shifted path (which starts at $\eta (\sigma_\ell )$).
Since by construction the balls $\UaK{N} (\eta (\sigma_\ell ))$ are disjoint, 
the sets $  R_{2N}[\theta_{\sigma_\ell}\eta ]
 \cap \UaK{N}  (\eta (\sigma_\ell )) $ are also disjoint.
Hence, for any $n\leq K/2(N+\varsigma)$, 
\[
 \abs{R_K[\eta ]} \geq \sum_{\ell =0}^{n-1} \abs{R_{2N}[\theta_{\sigma_\ell}\eta]
 \cap \UaK{N}(\eta (\sigma_\ell ))} .
\]
Therefore, by the strong Markov property, 
\[
 E_{\rm SRW} {\rm e}^{-\nu \abs{R_K }} \leq  
\lb E_{\rm SRW} {\rm e}^{-\nu \abs{R_{2N}\cap\UaK{N} }}\rb^{{\lfloor K/(2N+\varsigma)\rfloor}} ,
\]
and, in view of~\eqref{eq:HalfBound}, \eqref{eq:RnuBound} follows.
\qed
\end{proof}

\paragraph{Dirty Boxes}
The point is that the potential $\sum_{x}\phi_\beta \lb \ell_\gamma (x )\rb \geq \phi_\beta (1) 
\abs{R[\gamma ]}$ suppresses paths $\eta_\ell$ with fat ranges.  Let us record this observation as follows:
Consider a partition of $\Zd$ into disjoint cubic boxes of sidelength $K$, 
\be   
\label{eq:boxPartition}
 \Zd = \bigvee_p B_K (p ) , \quad p\in K\Zd .
\ee
Given the decomposition~\eqref{gleta}, let us say that $p\in K\Zd$ is dirty, $p\in\frd_K$, if
\be  
\label{eq:dirty}
 \abs{ R[\ueta ] \cap B_K (p ) } \geq 
c_1 K , 
\ee 
where $R[\ueta ]\df \cup R[ \eta_\ell ]$. 

\paragraph{Hairs} 
Hairs $\frh^\ell_K\subseteq \tilde\frh_K^\ell$ contain only clean steps: 
Define 
\[
 \frD_K \df \bigcup_{p\in\frd_K}  B_K (p )  .
\]
By definition,
\[
 \frh^\ell_K \df \setof{z\in \tilde\frh_K^\ell}{
\UaK{{2K}}
 (z )\cap  \frD_K = \emptyset} \quad\text{and} \quad
\frh_K \df \lbr \frh_K^1 ,\dots ,\frh_K^n\rbr .
\]
Since there exists $\alpha <\infty$ such that any ball $\UaK{K} (u)$ intersects at most $\alpha$ 
disjoint boxes 
$B_K (p )$, 
the definition is set up in such a way that any path $\eta$ contributing to some hair $\frh^\ell_K$ (i.e., the paths $\lambda_\ell^k:z_i^\ell\to w_i^\ell$ of Fig.~\ref{fig:prehairs}) automatically
satisfies the constraint $\abs{R [\eta ]}\leq c_1 \alpha K$.
\smallskip

\paragraph{Decorated skeletons} 
The decorated skeleton $\hat\gamma_K$ of $\gamma$ is the collection
\be  
\label{eq:gammaK}
\hat\gamma_K =\lbr \frt_K , \frd_K , \frh_K\rbr .
\ee
\paragraph{Upper Bound on the Weight of a Decorated Skeleton} 
The following  statement is crucial for our control of the geometry of
decorated skeletons:
\begin{proposition}
 \label{lem:skeleton}
For any $c_1$ large enough in~\eqref{eq:dirty}, there exists $c_4>0$ such that, on all large enough scales $K$,
\be  
\label{eq:skeleton}
A \lb \hat\gamma_K|\; x \rb \df \sumtwo{\gamma\sim \hat\gamma_K}{X(\gamma ) = x} a(\gamma ) 
\leqs
{\rm e}^{-
K \lb   \abs{\frt_K} + c_2 \sum_\ell \abs{\frh_K^\ell} +c_4 \abs{\frd_K}\rb \lb 1 -\mathrm{o}_{\scriptscriptstyle K}(1)\rb} ,
\ee
uniformly in $x\in\bbZ^d$ and in $\hat\gamma_K$; the constant $c_2$ was introduced in Lemma~\ref{lem:range}, $\lim_{K\to\infty}\mathrm{o}_{\scriptscriptstyle K}(1)=0$ uniformly in all other parameters, and the notation $\gamma\sim\hat\gamma_K$ means that the path $\gamma$ gives rise to the skeleton $\hat\gamma_K$.
\end{proposition}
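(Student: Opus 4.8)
The plan is to bound the weighted sum $A(\hat\gamma_K \mid x)$ by decomposing an arbitrary path $\gamma$ that gives rise to $\hat\gamma_K$ into its pieces $\gamma_0\cup\eta_0\cup\cdots\cup\eta_{m-1}\cup\gamma_m$ as in~\eqref{gleta}, and then summing separately over the three families of contributions: the trunk pieces $\gamma_\ell$, the hair pieces $\lambda_\ell^k$, and the pieces of the $\eta_\ell$ that pass through dirty boxes. The three exponential factors $\mathrm{e}^{-K|\frt_K|}$, $\mathrm{e}^{-c_2 K \sum_\ell|\frh_K^\ell|}$ and $\mathrm{e}^{-c_4 K|\frd_K|}$ should come from these three families respectively, using (i) the disjointness property $\Phi(\gamma)\geq\sum_i\Phi(\gamma_i)+\text{(contribution of the }\eta\text{'s)}$ together with the separation of the trunk, (ii) Lemma~\ref{lem:range} applied segment-by-segment along each pre-hair, and (iii) the crude energetic bound $\Phi(\gamma)\geq\phi(1)|R[\ueta]|$ restricted to dirty boxes, where each dirty box carries range $\geq c_1 K$ by definition~\eqref{eq:dirty}.

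First I would treat the trunk. Each $\gamma_\ell$ goes from $\gamma(\tau_\ell)=u_\ell$ to $\gamma(\sigma_\ell)=v_\ell\in\Lpartial\UaK{K}(u_\ell)$ and stays outside $\UaK{K}(u_j)$ for $j>\ell$; summing $\sfp_d$ over such paths with fixed exit point gives, by~\eqref{eq:ExitBall}, a factor $\simeq K^{-(d-1)}$, but we must \emph{also} extract an energetic gain. Here the right bookkeeping is: since the $\gamma_\ell$ are pairwise disjoint (being confined to the disjoint annular regions dictated by the trunk vertices, which are $c^{-1}K$-separated in $d_1$), $\Phi(\gamma)\geq\sum_\ell\Phi(\gamma_\ell)$, and each $\gamma_\ell$ has $|R[\gamma_\ell]|\geq c^{-1}K$ simply because it must travel $d_1$-distance $\geq c^{-1}K$ from $u_\ell$ to exit $\UaK{K}(u_\ell)$; hence $\Phi(\gamma_\ell)\geq \phi(1)c^{-1}K$. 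Combining the entropic factor $K^{-(d-1)}$ (which is $\mathrm{e}^{o_K(1)K}$) with the energetic factor $\mathrm{e}^{-\phi(1)c^{-1}K}$ per trunk vertex, and allowing for the bounded number of directions in which each $u_{\ell+1}$ can sit relative to $u_\ell$, yields the factor $\mathrm{e}^{-K|\frt_K|(1-o_K(1))}$ after absorbing constants (one may need to replace $K$ in the exponent of the statement by a fixed multiple — the paper's $\leqs$ and the freedom to rescale $K$ accommodate this).

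Next the hairs: each hair piece $\lambda_\ell^k:z_i^\ell\to w_i^\ell$ is a path whose beginning exits $\UaK{K}(z_i^\ell)$, whose whole range avoids the dirty region $\frD_K$ (by the very definition of $\frh_K^\ell$), and which therefore — thanks to the remark that any $\UaK{K}(u)$ meets at most $\alpha$ boxes — satisfies $|R[\lambda_\ell^k]|\leq c_1\alpha K$. For a single such piece, $\sum_{\eta:0\to z,\,\mathring\eta\subseteq\UaK{K}}\sfp_d(\eta)\1_{|R[\eta]|\leq c_1\alpha K}\leq\mathrm{e}^{-c_2 K}$ by Lemma~\ref{lem:range} (applied with $c_1$ replaced by $c_1\alpha$, which only changes the constant $c_2$). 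Summing over the bounded number of choices of the endpoint $w_i^\ell$ on $\Lpartial\UaK{K}(z_i^\ell)$ costs a factor $\mathrm{e}^{o_K(1)K}$, so each hair vertex contributes $\mathrm{e}^{-c_2 K(1-o_K(1))}$, giving the middle factor. Finally, the remaining portions of the paths $\eta_\ell$ — the parts not accounted for by hairs, i.e. those meeting dirty boxes — are controlled purely energetically: $\Phi(\gamma)$ also dominates $\phi(1)\sum_{p\in\frd_K}|R[\ueta]\cap B_K(p)|\geq\phi(1)c_1 K|\frd_K|$ by~\eqref{eq:dirty}; choosing $c_1$ large we get $c_4:=\phi(1)c_1/2$ (say), with the remaining $\phi(1)c_1 K|\frd_K|/2$ absorbing the entropy of these intermediate $\eta$-segments (whose number and length are tied to $|\frd_K|$ and $|\frt_K|$ up to bounded factors). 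Assembling the three estimates, using that the three families of path-pieces are \emph{disjoint} so that their potentials add, and that the entropic prefactors are all $\mathrm{e}^{o_K(1)K}$, gives~\eqref{eq:skeleton}.

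The main obstacle, I expect, is \textbf{keeping the three families genuinely disjoint and ensuring no double-counting of the potential}: the $\eta_\ell$ can re-enter regions near the trunk or near each other, and the decomposition of $\eta_\ell$ into a hair part (clean) and a dirty part is only a decomposition of its \emph{range-into-boxes}, not a clean concatenation of subpaths, so one has to be careful that the energetic lower bound $\Phi(\gamma)\geq\phi(1)|R[\gamma]|$ is applied to a set of sites counted once. The clean way to organize this is to work entirely at the level of occupied sites: $\Phi(\gamma)\geq\phi(1)\,|R[\gamma]|$ and $R[\gamma]$ is partitioned according to which box each site lies in, so the trunk-, hair-, and dirty-box contributions to $|R[\gamma]|$ are automatically disjoint; the entropic sums, on the other hand, are organized along the concatenation~\eqref{gleta} and its refinement into pre-hair segments, and each such sum is estimated \emph{before} summing over its endpoints so that Lemma~\ref{lem:range} and~\eqref{eq:ExitBall} apply verbatim. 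Reconciling these two organizations — sites for energy, segments for entropy — and checking that the bounded combinatorial factors ($\alpha$, the number of boundary exit points, the number of trunk directions) really are $\mathrm{e}^{o_K(1)K}$ is the technical heart of the argument.
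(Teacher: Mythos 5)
There is a genuine gap in your trunk estimate, and it is not a matter of constants that the $\leqs$ or a ``rescaling of $K$'' can absorb. You propose to get the factor ${\rm e}^{-K}$ per trunk vertex by combining the entropic bound $K^{-(d-1)}$ from~\eqref{eq:ExitBall} with the energetic bound $\Phi(\gamma_\ell)\geq \phi(1)c^{-1}K$ coming from $\abs{R[\gamma_\ell]}\geq c^{-1}K$. This only yields ${\rm e}^{-\phi(1)c^{-1}K}$ per trunk step, and $\phi(1)c^{-1}$ can be arbitrarily small (nothing bounds $\phi(1)$ from below). The exponent $K\abs{\frt_K}$ in~\eqref{eq:skeleton} is not negotiable: it is calibrated against $\xi(u_{\ell+1}-u_\ell)\approx K$ so that~\eqref{eq:KsBound} converts it into the surcharge $\frs_h(\frt_K)$ after multiplying by ${\rm e}^{h\cdot x}$; a bound ${\rm e}^{-\epsilon K\abs{\frt_K}}$ with small $\epsilon$ is useless downstream. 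The correct mechanism, which the paper uses, is the two-point function bound~\eqref{eq:Additive}: $\sum_{\gamma:u_\ell\to\Lpartial\UaK{K}(u_\ell)}\sfp_d(\gamma){\rm e}^{-\Phi(\gamma)}\leqs K^{d-1}{\rm e}^{-K}$ because $\xi(v-u_\ell)>K$ for every exit point $v$. That is, entropy and energy must be combined \emph{jointly} through the inverse correlation length $\xi$, not estimated separately. A related consequence you miss: this sharp bound is only available for trunk steps whose ball $\UaK{K}(u_\ell)$ is disjoint from $\frD_K$ (otherwise $\Phi(\gamma_\ell)$ cannot be added to the dirty-box energy without double counting, and one is stuck with the constant bound~\eqref{eq:ExitBall}); the missing ${\rm e}^{-K}$ for those steps must be extracted from the dirty-box energy itself, which is precisely why $c_1$ must be taken $\geq (c_4+\alpha')/\phi(1)$ with $\alpha'$ the maximal number of trunk balls a single box $B_K(p)$ can meet. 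Your split $c_4:=\phi(1)c_1/2$ gestures at this but assigns the sacrificed half to ``the entropy of intermediate $\eta$-segments'', which is not where the deficit lies.

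A secondary but real gap is the summation over pre-hair segments compatible with $\hat\gamma_K$. The connecting pieces $\eta_\ell^i$ between consecutive hair vertices, and the entire $\eta_\ell$ when $\frh_K^\ell=\emptyset$, each contribute an entropy of order $\log K$ (a confined Green's function, by~\eqref{eq:GFLambda}) or $K^{d-1}\log K$ in $d=2$; these factors are not ``${\rm e}^{{\rm o}_K(1)K}$ per segment'' in any automatic sense — they must be matched one-for-one against an exponential gain (the ${\rm e}^{-c_2K}$ of the adjacent hair vertex, the ${\rm e}^{-K}$ of the adjacent trunk step, or, for the initial piece $\eta_0$ near a large dirty cluster, the quadratic growth $\abs{\frd_K}\geq r^2$ of the dirty set versus the logarithmic growth of the Green's function). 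Your proposal does not identify which exponential factor pays for which logarithmic one, and in the case $\frh_K^0=\emptyset$ with $u_0$ adjacent to a large dirty component there is no nearby hair or trunk gain at all, so the argument genuinely requires the $r^2$ versus $\log(rK)$ comparison.
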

\begin{remark}
 Note that $A \lb \cdot \; |\; x \rb$ in 
\eqref{eq:skeleton}
is a {\em restricted} partition function, not a conditional one.
\end{remark}
\begin{proof}
Let us fix a decorated skeleton $\hat\gamma_K$.  We need to derive an upper bound on 
\linebreak
$
\sum_{\gamma\sim\hat\gamma_K} {\rm e}^{-\Phi (\gamma )} p(\gamma ) .
$ 
Since the paths $\gamma_i$ in~\eqref{gleta} are disjoint,
\begin{align}
\Phi  (\gamma ) &= \sum_x \phi  (\ell_\gamma (x) ) \geq c_1\phi  (1 ) K
\abs{\frd_K } + \sum_{x\not\in \frD_K} \phi  (\ell_\gamma (x) ) \nonumber\\
&\geq c_1\phi  (1 ) K
\abs{\frd_K } + \sum_{\ell =0}^{m-1} \1_{\lbr \UaK{K}  (u_\ell )\cap\frD_K =\emptyset\rbr}
\Phi  (\gamma_\ell ) ,
\label{eq:PotSplit}
\end{align}
for any $\gamma= \gamma_0\cup\eta_0\cup\gamma_1\cup\dots\cup \eta_{m -1}\cup \gamma_m \sim \hat\gamma_K$.  It follows that
\begin{multline*}
\sum_{\gamma\sim\hat\gamma_K} {\rm e}^{-\Phi  (\gamma )} 
\sfp_d (\gamma )\\
\leq 
{\rm e}^{- 
c_1\phi (1 ) K
\abs{\frd_K }} 
\sum_{\gamma\sim\hat\gamma_K}
\sfp_d (\gamma )\prod_\ell \exp\bigl(- \1_{\lbr \UaK{K} (u_\ell )\cap\frD_K =\emptyset\rbr}
\Phi  (\gamma_\ell)\bigr) .
\end{multline*}
Consider first the trunk 
$\frt_K = \lbr u_1, u_2, \dots ,u_n\rbr$ and the corresponding 
contribution of the paths $\gamma_\ell$. 
 If $\UaK{K} (u_\ell ) \cap \frD_K =\emptyset$, then the latter is bounded by
\[
\sum_{\gamma  :u_\ell \to \Lpartial \UaK{K} (u_\ell )}
\sfp_d (\gamma )
{\rm e}^{-\Phi  (\gamma )}  \leqs K^{d-1}{\rm e}^{-K},
\]
as can be seen from~\eqref{eq:Additive} and the fact that, by construction, $\xi(v-u_\ell)> K$ for all $v\in\Lpartial\UaK{K} (u_\ell)$.
Otherwise, if $\UaK{K} (u_\ell ) \cap \frD_K \neq \emptyset$, 
we need to take into account interaction with hairs and, \textit{a priori}, 
proceeding exactly as in~\eqref{eq:ExitBall} only bounds the expression by a constant.
However, by the spatial separation property of trunks, 
any box $B_K (p )$ intersects at most $\alpha^\prime $ balls of the type $\UaK{K} (u_\ell )$, 
regardless of the particular trunk we consider.  Since we 
are entitled to choose $c_1$ in~\eqref{eq:dirty}
arbitrary large, we can choose it of the form $c_1 = \lb c_4 +\alpha^\prime \rb /\phi  (1)$. With such a choice, the first term in the right-hand side of~\eqref{eq:PotSplit} becomes
\[
c_4 K \abs{\frd_K} + K \alpha'\abs{\frd_K} \geq c_4 K \abs{\frd_K} + K \#\setof{\ell}{\UaK{K} (u_\ell )\cap\frD_K \neq\emptyset},
\]
and we recover both the original ${\rm e}^{-K}$ price per each step of the trunk and the interaction potential 
price ${\rm e}^{-c_4 K}$ for each dirty box. 

Hairs are controlled through Lemma~\ref{lem:range}, 
which yields a contribution $e^{-c_2K|\frh_K|}$.
It remains to sum out the weights of all dirty 
parts of all the pre-hairs which are compatible with $\hat\gamma_K =\lbr \frt_K , \frd_K , \frh_K\rbr$. 
Let us focus on the worst case of the recurrent dimension $d=2$.  Note first of all that, by the strong Markov property,
\be  
\label{eq:ThinReturns}
\sum_{\eta : 0\to z} \sfp_d (\eta  )\1_{\lbr\abs{ R[\eta ]\cap \UaK{K}(z)}\leq c_1 K\rbr} \leq
\sum_{\eta : 0\to 0} \sfp_d (\eta  )\1_{\lbr\abs{ R[\eta ]\cap \UaK{K}}\leq c_1 K\rbr} \leqs \log K .
\ee
Indeed, in view of Lemma~\ref{lem:range} and by the strong Markov property,
\begin{align*}
\sum_{\eta : 0\to 0}
\sfp_d (\eta  )&\1_{\lbr\abs{ R[\eta ]\cap \UaK{K}}\leq c_1 K\rbr}\\
&=
\sumtwo{\eta : 0\to 0}{\eta\not \subseteq \UaK{K}} \sfp_d (\eta  )
\1_{\lbr\abs{ R[\eta ]\cap \UaK{K}}\leq c_1 K\rbr} + 
\sumtwo{\eta : 0\to 0}{\eta\subseteq \UaK{K}} 
\sfp_d (\eta  )
\1_{\lbr\abs{ R[\eta ]\cap \UaK{K}}\leq c_1 K\rbr}\\
&
\leqs K^{d-1}{{\rm e}^{- c_2 K}}
 \sum_{\eta : 0\to 0} 
\sfp_d (\eta  )\1_{\lbr\abs{ R[\eta ]\cap \UaK{K}}\leq c_1 K\rbr}
+
\sumtwo{\eta : 0\to 0}{\eta\subseteq \UaK{K}} 
\sfp_d (\eta  ),
\end{align*}
and thus
\[
\sum_{\eta : 0\to 0}
\sfp_d (\eta  )\1_{\lbr\abs{ R[\eta ]\cap \UaK{K}}\leq c_1 K\rbr}\\
\leqs \bigl(1-K^{d-1}{{\rm e}^{- c_2 K}}\bigr)^{-1}\, \sumtwo{\eta : 0\to 0}{\eta\subseteq \UaK{K}} 
\sfp_d (\eta).
\]
Therefore, the main contribution to the right-hand side of~\eqref{eq:ThinReturns} is at most of order $\log K$ as  it comes from paths $\eta$ confined inside $\UaK{K}$, see~\eqref{eq:GFLambda}.

\begin{figure}[t]
\begin{center}
\scalebox{0.6}{\input{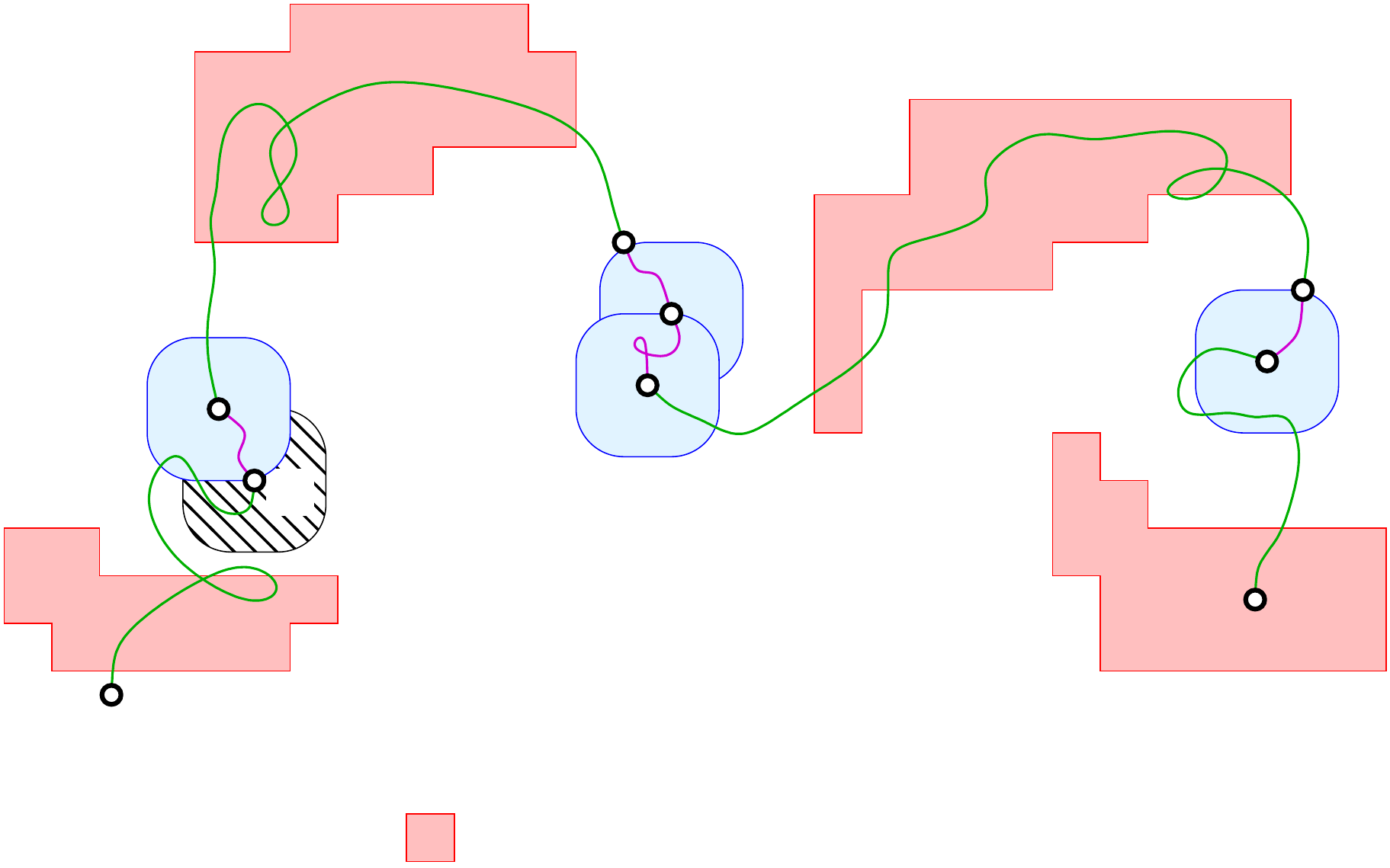_t}}
\end{center}
\caption{The paths $\eta_\ell^1,\eta_\ell^2,\eta_\ell^3=\emptyset,\eta_\ell^4$ and $\eta_\ell^5$ account for compatible pre-hairs. Notice that the range of the path $\eta_\ell^5$ inside the ball $\UaK{K}(w_4^\ell)$ cannot exceed $c_1 K$ by definition of hairs.}
\label{fig:prehairs}
\end{figure}
Let $\emptyset\neq \frh_K^\ell = \lbr z^\ell_1 , \dots, z^\ell_m\rbr $. By construction, any path 
$\eta_\ell :u_{\ell+1}\to v_\ell$  which is compatible with $\frh_\ell^K$ admits a decomposition (see Fig.~\ref{fig:prehairs})
\[
\eta_\ell = \eta_\ell^1 \cup \lambda_\ell^1\cup \eta_\ell^2\cup\lambda_\ell^2\cup \dots\cup \eta_\ell^m\cup \lambda_\ell^m\cup\eta_\ell^{m+1}, 
\]
where $\lambda_\ell^k : z_k^\ell\to w_k^\ell\in\Lpartial\UaK{K} (z_k^\ell)$  are precisely the paths which 
contribute to the hair $\frh_K^\ell$, whereas 
\[
\eta_\ell^1 :u_{\ell+1}\to z_1^\ell,\ \ \eta_\ell^2 :w_1^\ell\to z_2^\ell, \ \ \dots ,\ \ \eta_\ell^{m+1} :
w_m^\ell\to v_\ell 
\]
are the paths which might contribute to compatible pre-hairs.  Of course, some of the paths $\eta_\ell^i$ might be empty, and the last path $\eta_\ell^{m+1}$ might be too short.
However, the following requirements are enforced by construction (for the terminal path 
$\eta_\ell^{m+1}$ recall the 
double $\UaK{2K}$-ball condition in the definition of hairs):
\[
\max\lbr  \abs{R[ \eta_\ell^1]\cap\UaK{K}(z_1^\ell)}, \dots , \abs{R[ \eta_\ell^m]\cap\UaK{K}(z_m^\ell)}, 
\abs{R[ \eta_\ell^{m+1}]\cap\UaK{K} (w_m^\ell)}\rbr  \leq c_1 \alpha K .
\]
Consequently, we infer from~\eqref{eq:ThinReturns} that,
in the case of  non-empty $\frh_K^\ell  =\lbr z^\ell_1 , \dots, z^\ell_m\rbr $,
the contribution of all the compatible pre-hairs $\tilde\frh_K^\ell\sim \frh_K^\ell$ is
$\leqs \lb \log K\rb^{m+1}$, which is suppressed by the ${\rm e}^{-c_2 K m }$ cost of 
$ \frh_K^\ell$.  

Let us now consider the case $\frh_K^\ell = \emptyset$, with $\ell>0$. In that case, the path 
$\eta_\ell {: u_{\ell +1}\mapsto\Lpartial\UaK{K} (u_\ell )}$
satisfies by construction $\eta_\ell\cap \UaK{K} (u_{\ell -1} )= \emptyset$, and hence the 
corresponding contribution is bounded above as
$\leqs K^{d-1}\log K$. Indeed,  in terms of Green functions 
\[
\sumtwo{\eta: u_{\ell +1}\to  \Lpartial\UaK{K} (u_\ell )}{\eta\cap \UaK{K} (u_{\ell -1} )= \emptyset}
\sfp_d (\eta)
\leq\max_{z\in \Lpartial\UaK{K} (u_\ell  )} \sum_{y\in \Lpartial\UaK{K} (u_\ell  )}
G_{\bbZ^d\setminus \UaK{K} (u_{\ell -1} )} (z, y ) ,
\]
and the bound in question follows from \eqref{eq:GFLambda}.
However, for large enough $K$ the term $K^{d-1}\log K$  is suppressed by 
the ${\rm e}^{-K}$ cost of the $\ell$-th step of the trunk.  

The only remaining case is $\frh_K^0 = \emptyset$.  Then, necessarily,
\[
\eta_0\subseteq \hat \frD_0 \equiv \UaK{2K} (\frD\cup \{u_0\} ) \df \bigl(\frD\cup \{u_0\}\bigr)  + \UaK{2K}  .
\]
If $\UaK{2K} (u_0)\cap  \UaK{2K} (\frD ) = \emptyset$, then $\eta_0$ is confined to $\UaK{2K} (u_0)$ and, using again~\eqref{eq:GFLambda}, the corresponding contribution is bounded above by $\leqs\log K$, which is suppressed by the ${\rm e}^{-K}$ cost of the first step of the 
trunk. 

Thus, we are left to consider the situation when  $\UaK{2K} (u_0 )\cap  \UaK{2K} (\frD ) \neq \emptyset$. 
Using again the strong Markov property, we see that the contribution of $\eta_0$ is bounded above in terms of the SRW Green's function, see~\eqref{eq:GFLambda},
\[
 \sumtwo{\eta :u_1\to u_1}{\eta\subset\hat\frD_0} 
\sfp_d (\eta ) = G_{\hat\frD_0} (u_1 ,u_1 ) \leqs
\log d_1(u_1 ,\hat\frD_0^c ).
\]
There is nothing to worry about 
if $d_1(u_1 ,\hat\frD_0^c ) \leq c_5 K$.  If, however, $d_1(u_1 ,\hat\frD_0^c ) = rK > c_5 K$, 
then $\abs{\frd_K}\geq r^2$ and, consequently, 
$\log d_1(u_1 ,\hat\frD_0^c ) = \log (rK )$ is suppressed by the exponent 
$c_3 K\abs{\frd_K } \geq c_1 r^2 K$  in the interaction
 cost of the dirty set. 
\qed
\end{proof}
\subsection{Coarse-graining of decorated skeletons}
Recall the definition \eqref{eq:cone} of the forward cone $\calY_h$. 
The choice of $\nu$ in~\eqref{eq:cone} is dictated by Lemma~\ref{lem:nu}: 
For the rest of the paper we fix  $\nu = \nu (\kappa ) >0$ with 
$\kappa$ being  sufficiently small. 
\paragraph{Surcharge Inequality} As in \eqref{eq:cone}, 
given $h\in\partial\Ka$ and $u\in\bbR^d$, we define the surcharge
cost of $u$ as $\frs_h (u ) = \xi (u) -h\cdot u$. 

Accordingly, given a trunk $\frt_K = (u_0 ,u_1, \dots ,u_m )$ we define its surcharge cost as 
\begin{equation}
\label{eq:surcharge-tr}
\frs_h (\frt_K ) \df \sum_{\ell =0}^{m-1} \frs_h (u_{\ell +1 } - u_{\ell} ) .
\end{equation}
Since, by construction, $\abs{\xi (u_{\ell +1 } - u_{\ell}) - K}\leq {c_5}$ uniformly in 
scales $K$ and skeletons $\gamma_K$, we conclude that, for any fixed scale $K$, 
\be 
\label{eq:KsBound}
K\abs{\frt_K} - h\cdot x \geq \frs_h (\frt_K ) - c_5\abs{\frt_K} ,
\ee
uniformly in (sufficiently large) $x$ and  in  skeleton trunks $\frt_K$ compatible with paths $\gamma\in\calA_x$. 

Finally, the surcharge $\frs_h (\hat\gamma_K )$ of a decorated skeleton
 $\hat\gamma_K = \lbr \frt_K , \frd_K , \frh_K\rbr$ is defined as
\[
\frs_h (\hat\gamma_K ) \df  \frs_h (\frt_K ) + K\Bigl(  c_2 \sum_\ell \abs{\frh_K^\ell} +{c_4} \abs{\frd_K}\Bigr) .
\]
In view of~\eqref{eq:KsBound}, we can rewrite~\eqref{eq:skeleton} as
\be 
\label{eq:SurchSkeleton}
A_h (\hat\gamma_K |\;x ) \df {\rm e}^{h\cdot x}A ( \hat\gamma_K |\;x )
\leq {\rm exp}\Bigl\{ -\frs_h (\hat\gamma_K )\lb 1- \mathrm{o}_{\scriptscriptstyle K}(1)\rb  + \mathrm{o}_{\scriptscriptstyle K}(1)\, h\cdot x \Bigr\} .
\ee
\begin{proposition}
\label{prop:surcharge}
 For every $\delta >0$, there exists a scale $K_0 = K_0(\delta) $ such that, for every
$K\geq K_0$, 
\be 
\label{eq:surcharge}
A_h \bigl( \frs_h (\hat\gamma_K ) >2\delta \abs{x} \,\bigm|\,x \bigr) \leq \textrm{e}^{- \delta\abs{x}}, 
\ee
uniformly in $h\in\partial \Ka$ and all $\abs{x}$ sufficiently large.
\end{proposition}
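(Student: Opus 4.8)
The plan is to feed the weight bound \eqref{eq:SurchSkeleton} into a Peierls-type sum over decorated skeletons. Throughout, $A_h(\calE\,\bigm|\,x)$ denotes the restricted partition function $\sum_{\gamma\in\calE,\ \sfX(\gamma)=x}a_h(\gamma)$ (the convention of the Remark following Proposition~\ref{lem:skeleton}), so that
\[
A_h\bigl(\frs_h(\hat\gamma_K)>2\delta\abs{x}\,\bigm|\,x\bigr) = \sum_{\hat\gamma_K\,:\,\frs_h(\hat\gamma_K)>2\delta\abs{x}} A_h(\hat\gamma_K\,\bigm|\,x),
\]
the sum running over all decorated skeletons compatible with some $\gamma\in\calA_x$. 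By \eqref{eq:SurchSkeleton}, and since $\abs{h\cdot x}\leqs\abs{x}$ uniformly over $h\in\partial\Ka$ (the body $\Ka$ being bounded), each summand is at most $\exp\{-\frs_h(\hat\gamma_K)(1-\mathrm{o}_{\scriptscriptstyle K}(1))+\mathrm{o}_{\scriptscriptstyle K}(1)\abs{x}\}$. Splitting $\frs_h(\hat\gamma_K)=\tfrac34\frs_h(\hat\gamma_K)+\tfrac14\frs_h(\hat\gamma_K)$ and spending the first part against the threshold $2\delta\abs{x}$,
\[
A_h\bigl(\frs_h(\hat\gamma_K)>2\delta\abs{x}\,\bigm|\,x\bigr)\leqs {\rm e}^{-\frac32\delta\abs{x}(1-\mathrm{o}_{\scriptscriptstyle K}(1))+\mathrm{o}_{\scriptscriptstyle K}(1)\abs{x}}\sum_{\hat\gamma_K}{\rm e}^{-\frac14\frs_h(\hat\gamma_K)(1-\mathrm{o}_{\scriptscriptstyle K}(1))},
\]
where the final sum is over all decorated skeletons compatible with $\calA_x$.

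The heart of the matter is the entropy estimate: for every fixed $\epsilon_0>0$,
\[
\sum_{\hat\gamma_K}{\rm e}^{-\epsilon_0\frs_h(\hat\gamma_K)}\leqs {\rm e}^{\mathrm{o}_{\scriptscriptstyle K}(1)\abs{x}}
\]
uniformly in $h\in\partial\Ka$ and in $x$. To prove it I would factorise over the constituents of $\hat\gamma_K=(\frt_K,\frd_K,\frh_K)$, using $\frs_h(\hat\gamma_K)=\frs_h(\frt_K)+K(c_2\sum_\ell\abs{\frh_K^\ell}+c_4\abs{\frd_K})$. Each trunk step $w=u_{\ell+1}-u_\ell$ satisfies $\abs{\xi(w)-K}\leq c_5$, hence ranges over a $\xi$-shell of cardinality $\leqs K^{d-1}$; likewise each hair step, while the number of dirty boxes and pre-hair attachments that can be grafted onto a given trunk step is polynomial in $K$. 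Since each hair step is charged ${\rm e}^{-\epsilon_0 c_2 K}$ and each dirty box ${\rm e}^{-\epsilon_0 c_4 K}$, for $K$ large these geometric series converge and contribute at most a bounded factor per trunk step. Trunk steps in the direction conjugate to $h$ carry \emph{no} surcharge; there one instead invokes the endpoint constraint: by subadditivity of $\frs_h$ and $\xi(w)\geq K-c_5$, a trunk with $m$ steps ending within $\mathrm{O}(K)$ of $x$ has $\frs_h(\frt_K)\geq m(K-c_5)-h\cdot x-\mathrm{O}(K)$, so $\frs_h(\frt_K)\geq mK/4$ once $m\geqs\abs{x}/K$. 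Because $\log K=\mathrm{o}_{\scriptscriptstyle K}(K)$, trunks with $m\geqs\abs{x}/K$ steps are geometrically suppressed and sum to $\mathrm{O}(1)$, while there are only $\mathrm{poly}(\abs{x}/K)$ shorter trunks, each contributing at most $(CK^{d-1})^{\mathrm{O}(\abs{x}/K)}={\rm e}^{\mathrm{o}_{\scriptscriptstyle K}(1)\abs{x}}$; this yields the entropy estimate.

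Combining the three displays gives $A_h(\frs_h(\hat\gamma_K)>2\delta\abs{x}\,\bigm|\,x)\leqs {\rm e}^{-\frac32\delta\abs{x}(1-\mathrm{o}_{\scriptscriptstyle K}(1))+\mathrm{o}_{\scriptscriptstyle K}(1)\abs{x}}$; fixing $K_0=K_0(\delta)$ so large that, for $K\geq K_0$, every $\mathrm{o}_{\scriptscriptstyle K}(1)$ above is below $\delta/100$, the exponent is $\leq -\delta\abs{x}-\epsilon'\abs{x}$ for some $\epsilon'=\epsilon'(\delta)>0$, so the right-hand side is $\leq {\rm e}^{-\delta\abs{x}}$ once $\abs{x}$ is large enough to absorb the implicit constants. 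The only genuine obstacle is the trunk entropy count: the hairs and dirty boxes are immediately dominated by the factor $K$ sitting inside their surcharge, but for the trunk one must balance a \emph{polynomial}-in-$K$ per-step entropy against the \emph{large} scale $K$ and, simultaneously, exploit that reaching $x$ forces a trunk of about $\abs{x}/K$ steps; the remaining bookkeeping (how many hairs and dirty boxes a given trunk step can support, and why their number is polynomial in $K$) is routine and already implicit in the proof of Proposition~\ref{lem:skeleton}.
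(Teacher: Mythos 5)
Your proposal is correct and follows essentially the same route as the paper: feed the weight bound \eqref{eq:SurchSkeleton} into a Peierls sum over decorated skeletons, bound the number of skeletons with $N$ vertices by ${\rm e}^{O(N\log K)}$ (your sequential shell-by-shell count is the paper's second, path-based version of Kesten's lattice-animal bound), and beat this entropy with the per-vertex surcharge of order $K$, using the endpoint constraint to force $N\gtrsim\abs{x}/K$ and the surcharge to exclude $N\gg\abs{x}/K$. The only imprecision is describing hairs and dirty boxes as ``grafted onto a trunk step with polynomially many choices'' --- they are only $K$-connected to the rest of the skeleton, not localized near the trunk --- but the correct count is exactly the connected-cluster bound you are already invoking, so nothing breaks.
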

\begin{proof}
 Let us fix a number $r >0$ sufficiently large. Two points $x$ and $y$ are said to be $K$-neighbors
if $\abs{x-y}\leq rK$. This imposes a $K$-connectivity structure on the vertices of $\bbZ^d$ and, consequently,
one can talk about $K$-connected sets. By construction, any decorated skeleton $\hat\gamma_K$ is 
$K$-connected on any scale $K$. Therefore, the number of $K$-connected  skeletons $\hat\gamma_K$ 
which contain the origin and have $N$ vertices, $\#\lb \hat\gamma_K \rb =N$, is bounded above 
by ${\exp(c_6 N\log K)}$. 
This is a particular case of Kesten's bound on the number of lattice animals 
(\cite{Kesten}, p.85). 

 Alternatively,  all vertices of $\hat\gamma_K$ can be visited by a
$K$-connected  path, starting at the origin
and having at most $2N$ steps.
Since the number of $K$-neighbors of a vertex is $\leqs (rK)^d$, the number of 
such different paths is $\leqs (rK)^{d2N} \leq {\exp(c_6 N\log K)}$.

This way or another, 
the stated bound readily follows since the labeling of each of these $N$ vertices as belonging to $\frt_K$, $\frd_K$ or $\frh_K$ only results in an additional factor $3^N$.

Since
\begin{equation}
\label{eq:surcharge-sk}
\frs_h (\hat\gamma_K) \geq K(\abs{\frt_K} + c_2\sum_\ell\abs{\frh_K^\ell} + c_4 \abs{\frd_K}) \geq \min(1,c_2,c_4)K\,\#\lb \hat\gamma_K \rb \equiv c_7 N K,
\end{equation}
the above entropy bound shows that we may ignore skeletons with $\#\lb \hat\gamma_K \rb \geq c_8 \abs{x}/K$. This leaves at most ${\exp(c_9 \abs{x}\log K/K)}$ skeletons to consider. 
Therefore~\eqref{eq:SurchSkeleton} implies~\eqref{eq:surcharge} as soon as $K$ is sufficiently
large.  
\qed
\end{proof}

\paragraph{Coarse-graining of decorated skeletons} Since $A(x)\asymp {\rm e}^{-\xi (x)}$, the bound
\eqref{eq:surcharge} is trivial whenever $\frs_h (x) >2\delta \abs{x}$.  
Given a decorated skeleton $\hat\gamma_K = \lbr\frt_K , \frd_K  , \frh_K\rbr$, 
let us say that $u\in\frt_K$ is an $h$-cone point of $\hat\gamma_K$ if 
$\hat\gamma_K\subset \lb u -\calY_h\rb\cup\lb u+\calY_h\rb$.  Define
\[
 \frt_{K}^* \df \setof{u\in\frt_{K, }}{\text{$u$ is not an $h$-cone point of $\frt_K$}},
\]
and define $\hat\gamma_{K}^* \df \{\frt_{K}^* , \frd_K  , \frh_K\}$.  We claim that the set
 $\hat\gamma_{K}^*$ is very sparse. Specifically, we partition of $\bbR^d$ into 
slabs $B_{K, j}^h \df \setof{x}{(j-1 )K \leq h\cdot x < jK}$. Given
a skeleton $\hat\gamma_{K}$, let us define 
\[
\calB^*_h  (\hat\gamma_{K, } ) \df \setof{j}{B_{K, j}^h\cap \hat\gamma_{K}^* \neq\emptyset} .
\]
Note that \eqref{eq:surcharge-sk} per se is not enough in order to control the size of $\abs{\calB^*_h }$.
Indeed we may use it in order to control the size of the trunk $\abs{\frt_{K}}$ and hence, by 
\eqref{eq:surcharge-tr}, to conclude that the fraction  of increments of $\frt_{K }$ with 
relatively high surcharges is small. However, being a cone point of a skeleton is a global property, 
and a-priori it might happen that even if $u\in\frt_{K}$ lies far away from the hairs and dirty boxes
of $\hat \gamma_{K }$, and even if adjacent increments of $\frt_{K}$ have small surcharges (and 
consequently lie inside $\calY_h$), the sleleton $\hat \gamma_{K }$ can still break away from
$\lb u -\calY_h\rb\cup\lb u+\calY_h\rb$ in some distant region. 
However, the surcharge price of breaking through at far away regions grows with the distance 
to these regions.  
This situation was treated in our early papers: 
By a  straightforward adaptation of the arguments developed in 
Section~3.3 of~\cite{IoffeVelenik-Annealed}, the surcharge inequality of
Proposition~\ref{prop:surcharge} implies
\begin{proposition}
 \label{prop:skeletons}
For every $\delta >0$, there exists a scale $K_1 = K_1(\delta)$ and an 
exponent $\epsilon >0$ such that, for every $K\geq K_1$, 
\be 
\label{eq:skeletons}
A_h \bigl( \abs{\calB^*_h  (\hat\gamma_K ) }  >  \delta \frac{\abs{x}}{K} \bigm| x \bigr) \leq \textrm{e}^{- \epsilon \abs{x}}, 
\ee
uniformly in $h\in\partial \Ka$ and all $\abs{x}$ sufficiently large. 
\end{proposition}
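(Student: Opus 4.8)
\textbf{Overall strategy.} The plan is to deduce this from the surcharge estimate of Proposition~\ref{prop:surcharge} by the ``cone‐point via surcharge'' mechanism of Section~3.3 of~\cite{IoffeVelenik-Annealed}, adapted to the present decorated‐skeleton setting. The key point is that if a trunk vertex $u\in\frt_K$ fails to be an $h$-cone point of $\hat\gamma_K$, then the skeleton must contain, either before $u$ or after $u$, some portion that leaves the double cone $(u-\calY_h)\cup(u+\calY_h)$; and by the geometry of $\calY_h$ (via the support‐function inequality~\eqref{eq:support}), escaping this cone at a point that is at $\xi$-distance $\rho$ from $u$ forces a surcharge contribution of order at least $\mathrm{const}\cdot\rho$ to be paid \emph{locally near} that escape point. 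Thus a ``bad'' slab $j\in\calB^*_h(\hat\gamma_K)$ can be charged, via an accounting scheme, to a definite amount of surcharge distributed along $\hat\gamma_K$, and Proposition~\ref{prop:surcharge} caps the total surcharge by $2\delta'\abs{x}$.

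\textbf{Key steps, in order.}
\emph{(1) Local surcharge lower bound for cone escapes.} First I would show: there is a constant $c>0$ (depending only on $\nu$, i.e.\ on $\kappa$) such that if $u\in\frt_K$ and some $w\in\hat\gamma_K$ with $\xi(w-u)\geq\rho$ lies outside $(u-\calY_h)\cup(u+\calY_h)$, then the increments of the trunk, together with the hair/dirty weights, accumulated along the portion of $\hat\gamma_K$ joining $u$ to $w$ carry surcharge $\geq c\rho - O(K)$. This is where one uses $\frs_h(y)=\xi(y)-h\cdot y\geq 0$ together with the fact that, for $y\notin\calY_h$, $\frs_h(y)\geq\nu\,\xi(y)$, and subadditivity of $\frs_h$ along concatenations of increments (the $\xi$-metric triangle inequality gives $\frs_h$ subadditive, hence the surcharge of a coarse-grained path to $w$ dominates, up to $O(K)$ errors per increment, a fixed fraction of $\xi(w-u)$ once the path has left the cone). \emph{(2) Accounting scheme.} For each bad slab index $j$, pick a witnessing non-cone point $u_j\in\frt_K^*$ in that slab and a witnessing escape point $w_j$; charge slab $j$ a ``budget'' of surcharge of size $\sim cK$ drawn from the increments/decorations lying in the $O(1)$ slabs adjacent to $u_j$ or $w_j$ — or, if $w_j$ is far, from the long stretch in between, which by Step~(1) contains surcharge $\gtrsim c\,\xi(w_j-u_j)$, more than enough. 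A standard combinatorial argument (as in~\cite{IoffeVelenik-Annealed}, Section~3.3) shows each unit of surcharge is charged only $O(1)$ times, so $\abs{\calB^*_h(\hat\gamma_K)}\cdot cK \lesssim \frs_h(\hat\gamma_K)$. \emph{(3) Conclusion.} On the event $\{\abs{\calB^*_h(\hat\gamma_K)}>\delta\abs{x}/K\}$ we therefore get $\frs_h(\hat\gamma_K)>c\delta\abs{x}$; choosing $\delta'=c\delta/2$ in Proposition~\ref{prop:surcharge} and $K_1=K_0(\delta')$ gives the bound $A_h(\cdot\mid x)\leq e^{-\delta'\abs{x}}$, so $\epsilon=\delta'$ works (after absorbing the $\mathrm{o}_K(1)\,h\cdot x$ error of~\eqref{eq:SurchSkeleton}, harmless for $K$ large since $h\cdot x\leq\xi(x)\simeq\abs{x}$).

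\textbf{Main obstacle.} The delicate point is Step~(1): the definition of $h$-cone point is \emph{global} (the whole skeleton must sit in the double cone based at $u$), so a priori a skeleton can be ``locally well-behaved'' near $u$ — adjacent trunk increments inside $\calY_h$, no nearby hairs or dirty boxes — and yet break out of $(u-\calY_h)\cup(u+\calY_h)$ in a remote region. One must argue that the surcharge price of such a remote breakout grows proportionally to the distance to that region, so that it cannot be ``hidden''; this is exactly the monotonicity-of-escape-cost argument of~\cite{IoffeVelenik-Annealed} and the bulk of the work is checking that it transfers verbatim once the surcharge of a decorated skeleton is defined as in $\frs_h(\hat\gamma_K)=\frs_h(\frt_K)+K(c_2\sum_\ell\abs{\frh_K^\ell}+c_4\abs{\frd_K})$, i.e.\ that hairs and dirty boxes only \emph{add} to the surcharge budget and never help escape the cone for free.
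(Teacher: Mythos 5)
Your proposal is correct and follows essentially the same route as the paper, which itself only sketches the argument: it notes exactly the obstacle you identify (cone points are a global property, so a skeleton can break out of $(u-\calY_h)\cup(u+\calY_h)$ in a remote region) and resolves it by observing that the surcharge price of a remote breakout grows with the distance, deferring the details to a ``straightforward adaptation'' of Section~3.3 of~\cite{IoffeVelenik-Annealed}. Your Steps (1)--(3) — the local surcharge lower bound via subadditivity of $\frs_h$ and $\frs_h(y)\geq\nu\xi(y)$ off the cone, the $O(1)$-multiplicity charging scheme, and the conclusion from Proposition~\ref{prop:surcharge} — are precisely that adaptation, spelled out in somewhat more detail than the paper itself provides.
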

Since $\xi$ is a norm on $\bbR^d$, we may assume that $\nu\xi(x ) >\epsilon \abs{x}$ for all $x\neq 0$. In this case, \eqref{eq:skeletons} trivially holds for $x$-s lying outside the cone
$\calY_h$ (remember that $A_h(x) = e^{h\cdot x} A(x) \asymp {\rm e}^{-\frs_h (x)}$).

\subsection{Coarse-graining of microscopic polymers} 
\label{sub:CG-micro}
In order to define the irreducible splitting of microscopic paths $\gamma$,
we need an appropriate enlargement of the forward cone $\calY_h$ which was defined in 
 \eqref{eq:cone}. 
Recall that $\nu = \nu (\kappa )$ is fixed and it corresponds to a sufficiently small
choice of $\kappa$ through Lemma~\ref{lem:nu}. 
Let $0 <\nu < \tilde\nu <1$. Define
\be  
\label{eq:tildecone}
\tilde{\calY}_h \df \setof{x}{h\cdot x \geq (1-\tilde\nu )\xi (x)}.
\ee
There is a strict inclusion 
$\calY_h \subset \tilde{\calY}_h$. 
In the sequel we choose and fix  
$\tilde\nu$ in such a way that:
\smallskip

(i) 
 The interior of $\tilde\calY_h$ contains a lattice direction
$\sfe_h$, but, still, the aperture of $\tilde{\calY}_h$ is less than $\pi$. 

\begin{figure}[t]
\begin{center}
\scalebox{0.6}{\input{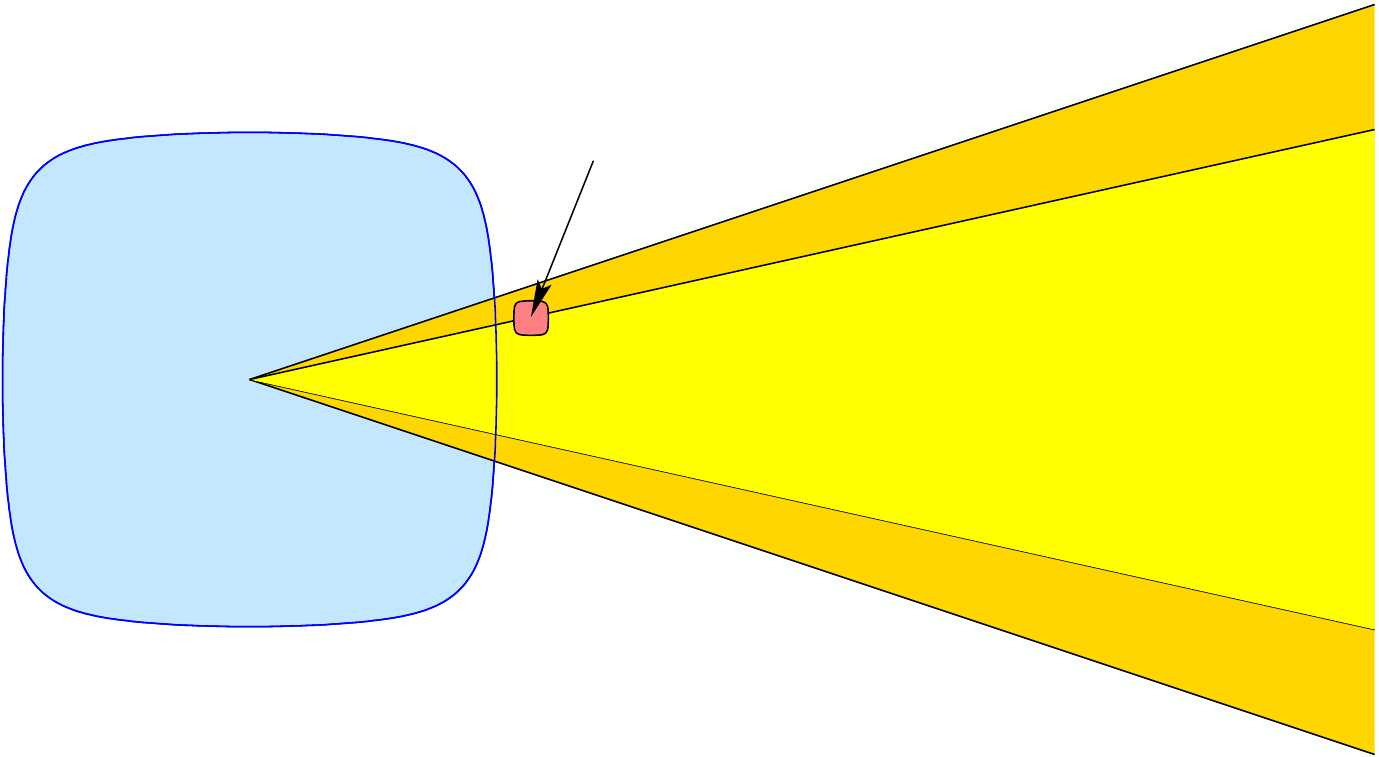_t}}
\end{center}
\caption{The cones $\calY_h$ and $\tilde{\calY}_h$ are well-separated: any translate of $\UaK{\bar{c}\kappa}$ centered at a point of $\calY_h\setminus\Ua$ is contained inside $\tilde{\calY}_h$.}
\label{fig:cones}
\end{figure}
(ii) Cones $\tilde\calY_h$ and $\calY_h$ are well-separated outside 
$\Ua$ in the following sense (see Fig.~\ref{fig:cones}): Let $(1-\nu )(1+\kappa )> 1$ 
(see Proof of Lemma~2). Then,
with $\bar{c}\df\max_{x\in\Ua}\abs{x}$,
\be  
\label{eq:Ycone-sep}
\UaK{\bar{c}\kappa}(x) \subset \tilde\calY_h\quad\text{for all $x\in\calY_h\setminus\Ua$.}
\ee

\paragraph{Definition} Let us say
that $u\in\gamma
$ is an $h$-cone point of $\gamma$ if 
the local time $\ell_\gamma [ u ]=1$ and $\gamma \subset (u- \tilde{\calY}_h) \cup 
(u +\tilde{\calY}_h)$.
\begin{theorem}
 \label{thm:cone-points}
There exists  $\chi  >0$ such that 
\be  
\label{eq:cone-points}
A_h (\text{$\gamma$ has less than two  $h$-cone points}\,|\,  x ) \leq \textrm{e}^{-\chi \abs{x}} ,
\ee
uniformly in $h\in\partial \Ka$ and $x$ large enough.
\end{theorem}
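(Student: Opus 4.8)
The plan is to condition on the decorated skeleton $\hat\gamma_K$, discard the exponentially improbable skeletons with the help of Propositions~\ref{prop:surcharge} and~\ref{prop:skeletons}, and then show that on a typical skeleton the microscopic path $\gamma$ automatically has at least two $h$-cone points. As noted after Proposition~\ref{prop:skeletons}, it suffices to treat $x\in\calY_h$: otherwise $\frs_h(x)>\nu\xi(x)$, whence $A_h(\,\cdot\mid x)\leq A_h(x)=\textrm{e}^{h\cdot x}A(x)\leq\textrm{e}^{-\frs_h(x)}<\textrm{e}^{-\nu\xi(x)}$ and there is nothing to prove. Fix $\delta>0$ small and $K$ large (to be chosen at the very end). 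Call $\hat\gamma_K$ \emph{good} if $\frs_h(\hat\gamma_K)\leq 2\delta\abs{x}$ and $\abs{\calB^*_h(\hat\gamma_K)}\leq\delta\abs{x}/K$; by Propositions~\ref{prop:surcharge} and~\ref{prop:skeletons}, $A_h(\hat\gamma_K\ \text{not good}\mid x)\leq\textrm{e}^{-\epsilon_0\abs{x}}$ for some $\epsilon_0>0$, and since $A_h(x)\leq 1$ it is enough to bound the conditional probability of having fewer than two $h$-cone points on each good skeleton.

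On a good skeleton, \eqref{eq:surcharge-sk} gives $\abs{\frt_K}+\abs{\frh_K}+\abs{\frd_K}\leqs\delta\abs{x}/K$, so $\frD_K$ and the hairs meet only $\leqs\delta\abs{x}/K$ of the slabs $B^h_{K,j}$; together with $\abs{\calB^*_h}\leq\delta\abs{x}/K$ this shows that at least $(1-c\delta)\abs{x}/K$ of the slabs crossed by $\hat\gamma_K$ contain a trunk vertex $u$ which is an $h$-cone point of $\hat\gamma_K$ (so $\hat\gamma_K\subset(u-\calY_h)\cup(u+\calY_h)$) and with $\UaK{C_1K}(u)\cap(\frD_K\cup\frh_K)=\emptyset$, for a large constant $C_1$ fixed below. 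Thinning these out we keep $N\geqs\abs{x}/K$ \emph{candidates} $u^{(1)},\dots,u^{(N)}$, pairwise $\xi$-separated by at least $2C_1K$. The geometric heart of the argument is that, on a good skeleton, being an $h$-cone point of $\gamma$ is, at a candidate $u$, a \emph{local} event. Indeed, exactly as in the proof of Proposition~\ref{lem:skeleton} (short clean $\eta$-pieces stay within $cK$ of trunk or hair vertices, the remaining $\eta$-mass being confined to $\frD_K$), any $\gamma\sim\hat\gamma_K$ lies in the inflation $\hat\gamma_K+\UaK{bK}$ for a fixed $b$; since $\hat\gamma_K$ is $K$-connected and contained in $(u-\calY_h)\cup(u+\calY_h)$, rescaling~\eqref{eq:Ycone-sep} by $K$ produces a constant $C_1=C_1(b,\kappa)$ such that, deterministically,
\[
\gamma\setminus\UaK{C_1K}(u)\ \subset\ (u-\tilde{\calY}_h)\cup(u+\tilde{\calY}_h).
\]
Hence $u$ is an $h$-cone point of $\gamma$ as soon as the local event $G_u\df\{\ell_\gamma[u]=1\}\cap\{\gamma\cap\UaK{C_1K}(u)\subset(u-\tilde{\calY}_h)\cup(u+\tilde{\calY}_h)\}$ holds, and $G_u$ depends only on the portion of $\gamma$ inside $\UaK{C_1K}(u)$, which — $u$ being a clean candidate — consists of the $O_K(1)$ pieces $\gamma_\ell$ (and short adjacent $\eta$-bits) meeting this ball and is unaffected by $\frD_K$ and the hairs.

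It remains to bound, on a good skeleton, the conditional probability that $G_{u^{(i)}}$ fails for all but at most one $i$. Two ingredients: (a) a uniform lower bound $A_h\bigl(G_{u^{(i)}}\mid\hat\gamma_K,\ \gamma\setminus\UaK{C_1K}(u^{(i)})\bigr)\geq\rho>0$ — since $\tilde{\calY}_h$ contains the lattice direction $\sfe_h$ in its interior and the trunk vertices adjacent to $u^{(i)}$ lie in $u^{(i)}\pm\calY_h\subset u^{(i)}\pm\tilde{\calY}_h$, one reroutes the $O_K(1)$ excursions of $\gamma$ inside $\UaK{C_1K}(u^{(i)})$ straight along $\pm\sfe_h$ so as to pass through $u^{(i)}$ exactly once while staying in $(u^{(i)}-\tilde{\calY}_h)\cup(u^{(i)}+\tilde{\calY}_h)$, a $\sfp_d$-positive family of local configurations whose $a$-weight differs from the rest only by a bounded factor, $\Phi$ depending only on the bounded local times of that local piece; and (b) conditional independence: given $\hat\gamma_K$ and $\gamma$ outside all the balls $\UaK{C_1K}(u^{(i)})$, the portions $\gamma\cap\UaK{C_1K}(u^{(i)})$ are conditionally independent, because the pieces $\gamma_\ell$ are mutually disjoint so that $\Phi(\gamma)$ — hence $a(\gamma)=\textrm{e}^{-\Phi(\gamma)}\sfp_d(\gamma)$ — splits over the disjoint balls and their common complement. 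Combining (a) and (b) (so that $\sum_i\1_{G_{u^{(i)}}}$ stochastically dominates $\mathrm{Binom}(N,\rho)$ given $\hat\gamma_K$),
\[
A_h\bigl(\#\{i:G_{u^{(i)}}\text{ holds}\}\leq 1\mid\hat\gamma_K\bigr)\ \leqs\ (N+1)(1-\rho)^{N-1}\ \leqs\ \textrm{e}^{-\epsilon_1\abs{x}/K}
\]
on every good skeleton; summing over good skeletons (using $A_h(x)\leq1$) and adding the bad-skeleton contribution $\textrm{e}^{-\epsilon_0\abs{x}}$ yields~\eqref{eq:cone-points} with any $\chi<\min(\epsilon_0,\epsilon_1/K)$.

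The main obstacle is the interplay between localisation and decoupling in the last two paragraphs: with no killing rate available, one has to engineer the notion of clean candidate so that the cone-point property genuinely localises at $u^{(i)}$ and so that the portions of $\gamma$ in the distinct balls $\UaK{C_1K}(u^{(i)})$ genuinely decouple, the only structural inputs being disjointness of the $\gamma_\ell$-pieces and of the candidate balls. This is precisely where the repulsive arguments of~\cite{IoffeVelenik-Annealed} must be adapted with care; granted this, the concluding large deviation estimate is routine.
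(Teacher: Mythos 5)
Your proposal follows essentially the same route as the paper's proof: reduction to good skeletons via Propositions~\ref{prop:surcharge} and~\ref{prop:skeletons}, extraction of $\asymp\abs{x}/K$ well-separated clean trunk cone points (the paper's disjoint regular $r$-stretches), localization of the cone-point property via the inflation $\bigcup_{y\in\hat\gamma_K}B_{\psi K}(y)$ together with the cone separation~\eqref{eq:Ycone-sep}, factorization of the weight over the disjoint local pieces to dominate the cone-point count by a Binomial, and an explicit ${\rm e}^{-cK}$-cost surgery giving a uniform success probability. The only place you are lighter than the paper is the surgery itself — the rerouted local path must reproduce the given trunk, which forces a lattice approximation of the polygonal line through the $u_j$'s (not a straight $\sfe_h$ segment) together with the $\log K$ Green-function bound on the denominator of~\eqref{eq:p-rho} — but you correctly flag this as the delicate point and the underlying idea is identical.
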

\begin{definition}
 \label{def:irreducible}
Let $h\in\partial\Ka$.

\noindent
$\vartriangleright$ The set of backward irreducible paths $\calT_{\sfb}$ contains those paths $\gamma = (u_0 , \dots , u_n )$ such that $\ell_\gamma (u_n ) = 1$, $\gamma\subset u_n - \tilde\calY_h$
and $\gamma$ does not have $h$-cone points other than $u_n$.

\noindent
$\vartriangleright$ The set of forward irreducible paths $\calT_{\sff}$ contains those paths $\gamma = (u_0 , \dots , u_n )$ such that $\ell_\gamma (u_0 ) = 1$, $\gamma\subset u_0 + \tilde\calY_h$ and $\gamma$ does not have $h$-cone points other than $u_0$.

\noindent
$\vartriangleright$ The set of irreducible paths $\calT$ contains the paths such that $\ell_\gamma (u_0 ) = \ell_\gamma (u_n ) = 1$, $\gamma\subset (u_0 + \tilde\calY_h)\cap(u_n - \tilde\calY_h)$, with no $h$-cone points besides $u_0$ and $u_n$.
\end{definition}
\begin{remark}
In the sequel, we shall frequently identify irreducible paths with their appropriately
chosen spatial shifts. Furthermore, we shall use notation $\gamma_1\amalg\gamma_2\amalg\dots $
(instead of $\gamma_1\cup\gamma_\cup \cdots $) whenever we are talking about 
concatenation of \em{irreducible} paths.
\end{remark}
The following central result, 
which sets up the stage for
the subsequent analysis of polymer measures at critical drifts, 
 is an immediate consequence of Theorem~\ref{thm:cone-points}.
\begin{theorem}
\label{thm:renewal}
Let  $h\in\partial \Ka$. For any $x\in \bbZ^d$, either $A_h (x) \leq {\rm e}^{-\chi\abs{x}/2}$ or 
\be 
\label{eq:renewal}
\begin{split}
A_h (x)&\lb 1 +
\smo{{\rm e}^{-\frac{\chi \abs{x}}{2}}}
\rb\\
&\quad = 
\sumtwo{\gamma_\sff\in\calT_\sff}{\gamma_\sfb\in\calT_\sfb}
\sum_{N=1}^\infty  \sum_{\gamma_1, \dots , \gamma_N\in \calT}
a_h (\gamma_\sfb )a_h (\gamma_\sff )\prod_1^N a_h (\gamma_i )
\1_{\lbr \gamma_\sfb\amalg\gamma_1\amalg\dots\amalg\gamma_N\amalg\gamma_\sff
\in\calA_x\rbr}.
\end{split}
\ee
\end{theorem}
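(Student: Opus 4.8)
The plan is to read Theorem~\ref{thm:renewal} directly off Theorem~\ref{thm:cone-points} by cutting a path at its $h$-cone points. Fix a (large) $x\in\bbZ^d$ and write $A_h(x)=\sum_{\gamma\in\calA_x}a_h(\gamma)$. I would first peel off the atypical paths: by Theorem~\ref{thm:cone-points}, the total $a_h$-weight of the $\gamma\in\calA_x$ having fewer than two $h$-cone points is at most $\mathrm{e}^{-\chi\abs{x}}$. Hence either $A_h(x)\le\mathrm{e}^{-\chi\abs{x}/2}$, which is the first alternative, or $A_h(x)>\mathrm{e}^{-\chi\abs{x}/2}$, in which case the discarded weight is a relative error of size at most $\mathrm{e}^{-\chi\abs{x}}/A_h(x)=\smo{\mathrm{e}^{-\chi\abs{x}/2}}$ — after, if necessary, using in Theorem~\ref{thm:renewal} a strictly smaller constant than the $\chi$ of Theorem~\ref{thm:cone-points}. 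It then remains to identify $\sum_{\gamma\in\calA_x}a_h(\gamma)\,\1_{\{\gamma\ \text{has}\ \ge2\ h\text{-cone points}\}}$ with the right-hand side of~\eqref{eq:renewal}.

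This is the cone-point decomposition. Given such a $\gamma$, list its $h$-cone points $u_0,u_1,\dots,u_N$ ($N\ge1$) in the order $\gamma$ visits them (each is visited exactly once, since cone points have local time $1$), and cut $\gamma$ there, writing $\gamma=\gamma_\sfb\amalg\gamma_1\amalg\dots\amalg\gamma_N\amalg\gamma_\sff$ with $\gamma_\sfb$ running from $\gamma(0)$ to $u_0$, each $\gamma_i$ from $u_{i-1}$ to $u_i$, and $\gamma_\sff$ from $u_N$ to $\gamma(n)$. I would check that this assignment is a bijection onto the index set of~\eqref{eq:renewal}. The one geometric input needed is that, since $\tilde\calY_h$ has aperture strictly less than $\pi$, $\tilde\calY_h\cap(-\tilde\calY_h)=\{0\}$ and hence $(u-\tilde\calY_h)\cap(u+\tilde\calY_h)=\{u\}$ for every $u$; removing the (local-time-$1$) vertex $u_i$ then splits $\gamma$ into two arcs lying on the two sides of $u_i$. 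Applying this at each $u_i$ yields (i) that the ranges of the pieces are pairwise disjoint apart from the cutting vertices, so each piece has local-time-$1$ endpoints and lies in the requisite translate of $\pm\tilde\calY_h$, i.e.\ $\gamma_\sfb\in\calT_\sfb$, $\gamma_i\in\calT$, $\gamma_\sff\in\calT_\sff$; and (ii) that a vertex is an $h$-cone point of the concatenation precisely when it is one of the $u_i$ — a hypothetical cone point interior to some piece would, by propagating the cone inclusions through the cutting vertices, be a cone point of $\gamma$, contradicting that the $u_i$ exhaust them, and the converse is the same argument run backwards. So cutting and concatenating are mutually inverse.

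Finally, the weight factorizes along this decomposition. The displacement $\sfX$ and the number of steps $\abs{\cdot}$ are additive under $\amalg$, and by (i) the ranges of $\gamma_\sfb,\gamma_1,\dots,\gamma_N,\gamma_\sff$ overlap only at the cutting vertices, each of local time $1$ in $\gamma$; since $\phi(0)=0$ the potential $\Phi$ is then additive over the pieces (the shared cone-point vertices being accounted for by the standard convention), so that $a_h(\gamma)=a_h(\gamma_\sfb)\,a_h(\gamma_\sff)\prod_{i=1}^N a_h(\gamma_i)$. Summing this identity over all $\gamma\in\calA_x$ with at least two $h$-cone points — equivalently over all $\gamma_\sfb\in\calT_\sfb$, $\gamma_\sff\in\calT_\sff$, $N\ge1$, and $\gamma_1,\dots,\gamma_N\in\calT$ with $\gamma_\sfb\amalg\gamma_1\amalg\dots\amalg\gamma_N\amalg\gamma_\sff\in\calA_x$ — produces exactly the right-hand side of~\eqref{eq:renewal}.

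I do not expect a genuine obstacle: all of the analytic content is already contained in Theorem~\ref{thm:cone-points} (hence in the coarse-graining of Section~\ref{sec:CG}), and what is left is the bookkeeping of the cone-point decomposition, which is forced by the geometry $\tilde\calY_h\cap(-\tilde\calY_h)=\{0\}$ — this is precisely why the result is advertised as an immediate consequence. The only point deserving a line of care is the passage from the absolute bound $\mathrm{e}^{-\chi\abs{x}}$ to the relative error $\smo{\mathrm{e}^{-\chi\abs{x}/2}}$, which is exactly why the conclusion is stated as a dichotomy: it genuinely fails when $A_h(x)$ is itself exponentially small, i.e.\ when the surcharge $\frs_h(x)$ is macroscopic, a regime in which the first alternative holds trivially via~\eqref{eq:Additive}.
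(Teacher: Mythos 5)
Your proposal is correct and is exactly the argument the paper intends: the paper gives no separate proof of Theorem~\ref{thm:renewal}, declaring it an immediate consequence of Theorem~\ref{thm:cone-points}, and your write-up (peeling off the sub-two-cone-point paths via the dichotomy, then cutting at cone points and factorizing the weights using disjointness of the pieces' ranges) is the standard fleshing-out of that claim.
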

\begin{proof}[of Theorem~\ref{thm:cone-points}]
Given a decorated skeleton $\hat\gamma_K$, let us define
\[
\bbB^*_h  (\hat\gamma_K ) \df \bigcup_{j \in \calB^*_h  (\hat\gamma_K )} B_{K, j}^h \quad 
{\rm and}\quad \bbB_h  (\hat\gamma_K ) \df \bbR^d\setminus \bbB^*_h  (\hat\gamma_K ) .
\]
Both $\bbB^*_h$ and $\bbB_h$ are disjoint unions of strips orthogonal to the $h$-direction. 
By construction, the only vertices of $\hat\gamma_K$ which lie inside $\bbB_h$ are the 
cone points of the trunk $\frt_K = \lb u_0 , \dots , u_m \rb $. Let  $r\in\bbN$. A string of consecutive 
vertices 
$\frt_{K, j}\df \lb u_{j-2r}, \dots , u_{j+2r} \rb  \subset \frt_K$ is called a \emph{regular $r$-stretch} if
all these vertices belong to the same connected component of $\bbB_h$.  Lattice points which lie close
to the centers 
$u_j$ of regular stretches are candidates for $h$-cone points of paths.  Of course, one should 
make appropriate choices of $r$ and the coarse-graining scale $K$. Let us explain how these choices are made:
\smallskip 

\noindent
$\blacktriangleright\;$\textsc{Step} 1: Choice of scales.\newline
\emph{Choice of $r$.}  By construction, there exists $\psi <\infty$ such that, on all scales $K$, the paths
$\gamma\subset \cup_{y\in\hat\gamma_K} B_{{\psi} K}(y)$ whenever $\hat\gamma_K$ is the 
decorated skeleton of $\gamma$. We choose $r$ in such a way that, for all large enough scales $K$,
\be 
\label{eq:rchocie}
\bigcup_{y\in\hat\gamma_K\setminus\lbr u_{j-r }, \dots , u_{j+r}\rbr}B_{{\psi} K}(y)
\subset \bigl( u_j - \tilde\calY_h\bigr) \cup\bigl( u_j +\tilde\calY_h\bigr) ,
\ee
provided that $\frt_{K, j}$ is a regular $r$-stretch.  This is possible by definition of 
cone points (of skeletons) and in view of the strict inclusion $\calY_h\subset\tilde\calY_h$.  

\noindent
\emph{Choice of $K$.}
Let $r$ be  chosen as above. Regular stretches $\frt_{K,j_1}, \ldots, \frt_{K, j_m}$ are called 
disjoint if $j_{\ell}+2r \leq j_{\ell +1} -2r$ for all $\ell = 1, \dots, m-1$. We rely on the following consequence
of Proposition~\ref{prop:skeletons}: There exist $\delta_1 >0$, $\epsilon_1 >0 $ and a scale 
$K$, such that, 
\be   
\label{eq:Kchoice}
A_h \bigl( \text{$\hat\gamma_K$ has less than $\frac{\delta_1\abs{x}}{K}$ 
disjoint regular $r$-stretches} \bigm| x \bigr) \leq \textrm{e}^{- \epsilon_1 \abs{x}}, 
\ee
uniformly in $h\in\partial\Ka$ and $\abs{x}$ sufficiently large.
\smallskip 

\noindent
$\blacktriangleright\;$\textsc{Step} 2: Domination by independent Bernoulli random variables.\newline
We fix $r$ and $K$ as above and proceed with the proof. Let $\hat\gamma_K$ be a decorated
skeleton which contains $N\geq \delta_1\abs{x}/K$ disjoint regular stretches 
$\frt_{K, j_1}, \dots ,\frt_{K, j_N}$. Then, for any $\gamma\sim\hat\gamma_K$, the  decomposition~\eqref{gleta} gives rise to
\be 
\label{eq:stretches} 
\gamma = \rho_0\amalg\lambda_1\amalg\rho_1\amalg\dots \amalg\lambda_N\amalg\rho_{N} ,
\ee
where $\lambda_\ell$ is the portion of $\gamma$ from $u_{j_\ell - r}$ to $u_{j_\ell +r}$.  Explicitly, 
\be 
\label{eq:lambdaStretch}
\lambda_\ell \df \gamma_{j_\ell -r}\amalg\eta_{j_\ell -r}\amalg\dots
\amalg\gamma_{j_\ell +r-1}
\amalg\eta_{j_\ell +r-1}. 
\ee
Now, things are set up in such a way that if $\gamma_{j_\ell}$ contains an $h$-cone point 
of $\lambda_\ell$, then 
it is automatically  an $h$-cone point of the whole path $\gamma$.
This follows from~\eqref{eq:rchocie} and the observation
that $\gamma\setminus\lambda_\ell$ is confined to the union therein. 
Furthermore, since we are talking about regular stretches,  the pre-hairs $ \frh_K^i =\emptyset$
for any $\ell = 1, \dots, N$ and any $i = j_\ell -r , \dots, j_{\ell}+r$.  Which means that $\lambda_1, \dots ,
\lambda_N$ are disjoint. 
Define the event
\[
 \calE (\delta ,x ) \df \Bigl\{\text{$\gamma$ has less than $\delta\frac{\abs{x}}{K}$ $h$-cone points}\Bigr\} .
\]
In the sequel, we shall use the following notation for {\em restricted} partition functions: 
\[
A_h \bigl( \cdot \bigm| \calC\bigr) \df \sum_{\gamma\sim\calC} a_h(\gamma)
\1_{\lbr \gamma\in \cdot\rbr}
,
\]
where the sum is over all paths compatible with some constraints $\calC$ (e.g., a given decorated skeleton); the constraint $\calC$ will always imply that the endpoint is $x$.
We claim that there exist $\delta_2 = \delta_2 (K) >0$ and $\epsilon_2 =\epsilon_2 (K)>0$ such 
that
\be  
\label{eq:EBound}
A_h \bigl( \calE (\delta_2 ,x ) \bigm| \hat\gamma_K\bigr) \leq \mathrm{e}^{-\epsilon_2\abs{x}} ,
\ee
uniformly in decorated skeletons $\hat\gamma_K$ which have $N\geq \delta_1\abs{x}/K$ disjoint stretches. 
Indeed, in the notation  of~\eqref{eq:stretches}, for  any such skeleton,
\[
 A_h \lb \calE (\delta_2 ,x ) ~\big|\; \hat\gamma_K\rb \leq \max_{\underline{\rho}\df (\rho_0, \dots , \rho_N)}
A_h \lb \calE (\delta_2 ,x ) ~\big|\; \hat\gamma_K ;\underline{\rho} \rb .
\]
This is controlled by comparison with independent Bernoulli random variables. Define
\be  
\label{eq:p-rho}
p = p (\underline{\rho}) \df \min_\ell \frac{\sum_{\lambda_\ell\sim\frt_{K, j_\ell}} a(\lambda_\ell \big|
\underline{\rho})\1_{\lbr\text{$\gamma_{j_\ell} $ contains an $h$-cone point of $\lambda_\ell$ }\rbr} }{
\sum_{\lambda_\ell\sim\frt_{K, j_\ell}} a(\lambda_\ell \big|
\underline{\rho})} .
\ee
Above, $\lambda_\ell\sim\frt_{K, j_\ell}$ is just a short-hand notation for~\eqref{eq:lambdaStretch},
and 
\[
a(\lambda_\ell \big|
\underline{\rho}) \df {\rm exp}\lbr -\Phi (\lambda_\ell \big|
\underline{\rho}) - \abs{\lambda_\ell}\log (2d)\rbr
\]
 with 
\[
 \Phi (\lambda_\ell \big|
\underline{\rho}) \df \sum_x
\lbr 
\phi(\ell_{\underline{\rho }}[x] +\ell_{\lambda_\ell} [x])
-\phi(\ell_{\underline{\rho }}[x]) \rbr.
\]
Since $\lambda_1, \dots ,\lambda_N$ are disjoint
\[
 A_h \bigl( \calE (\delta_2 ,x ) \bigm| \hat\gamma_K  ;\underline{\rho}  \bigr) \leq \bbP_p\bigl( 
X_1 +\dots +X_N <  \delta_2 \frac{\abs{x}}{K}\bigr) ,
\]
where $X_1, \dots ,X_N$ are i.i.d. Bernoulli($p$) random variables. 

It remains to derive a 
 {strictly positive
lower bound on $p (\underline{\rho} )$, which would hold uniformly 
 in $\hat\gamma_K$ and $\underline{\rho}$.
Note that we are not shooting for optimal estimates on $p$ in~\eqref{eq:p-rho}.  Any uniform estimate
which depends only on $K$ and $r$ would do.
}
\smallskip

\noindent
$\blacktriangleright\;$\textsc{Step} 3: Upper bound on denominator.\newline
Let $\frt_{K ,j_\ell}$ be a regular stretch of a skeleton $\hat\gamma_{K}$. We continue to rely on the decomposition~\eqref{eq:stretches} of compatible paths $\gamma\sim\hat\gamma_K$.  Since the interacting
potential $\phi$ is 
non-decreasing, 
$a (\lambda_\ell |\underline{\rho }) \leq  \lb 1/2d\rb^{\abs{\lambda_\ell }}
$.
Moreover, we are talking about regular stretches, so that
compatible paths $\lambda_\ell  : u_{j_\ell -r}\to  u_{j_\ell +r}$ are necessarily confined:
$\lambda_\ell \subseteq \UaK{2K} \lb \frt_{K ,j_\ell }\rb$. 
It follows that the denominator in~\eqref{eq:p-rho} is bounded above as
\be  
\label{eq:denom-p}
\sum_{\lambda_\ell\sim\frt_{K, j_\ell}} a(\lambda_\ell \big|
\underline{\rho}) 
\leqs  \log K.
\ee
\smallskip

\noindent
$\blacktriangleright\;$\textsc{Step} 4: Surgeries and lower bound on numerator.\newline
In order to derive a lower bound on the numerator it would 
suffice  to show that there exists $c_{10}<\infty$ such that, for any $\underline{\rho}$
in~\eqref{eq:stretches}, one is able to produce a path
\be 
\label{eq:lambdaStretch-star}
\lambda_\ell^* = \gamma^*_{j_\ell -r}\amalg\gamma^*_{j_\ell -r+1}\amalg\dots \amalg \gamma^*_{j_\ell +r -1} , 
\ee
which is compatible with~\eqref{eq:lambdaStretch}, $\lambda_\ell^*\sim\frt_{K, j_\ell}$,
contains an $h$-cone point
in its segment 
$\gamma_{j_\ell}^* $, 
and satisfies $a (\lambda_\ell ^*|\underline{\rho }) \geq {\rm e}^{-c_{10} K}$.  Indeed, if such 
$\lambda_\ell^*$ exists,  then, 
in view of~\eqref{eq:denom-p}, a substitution into~\eqref{eq:p-rho} gives the following target uniform
lower bound on $p$:
\be 
\label{eq:BoundOnP}
\min_{\underline{\rho}} p (\underline{\rho})  \geqs \frac{ {\rm e}^{- c_{10} K}}{\log K}  .
\ee
We claim that the required properties of $\lambda_\ell^*$ in its decomposition~\eqref{eq:lambdaStretch-star}
are secured by the following set of conditions:

(a) For all $k=-r, \dots, r-1$, the path $\gamma^*_{j_\ell +k}: u_{j_\ell +k}\mapsto u_{j_\ell +k+1}$
is self-avoiding and, apart from its end-point $u_{j_\ell +k+1}$, lies inside  
$
\UaK{K}(u_{j_\ell +k})\setminus  
\cup_{m <k}\UaK{K} (u_{j_\ell +m })$. 

(b)  There exists a geometric constant $g_1= g_1(d)$ (independent of $K$) such that all the paths 
satisfy $\abs{\gamma^*_{j_\ell +k}}\leq g_1K$. 

(c)  There exists a geometric constant $g_2= g_2 (d) \leq \bar{c}$ (see~\eqref{eq:Ycone-sep}) such that, for all $k= -r, \dots, r-1$,
the distance ${\rm d}\bigl(  \gamma^*_{j_\ell +k} , [u_{j_\ell +k},  u_{j_\ell +k+1}]\bigr) \leq g_2\kappa K$, where
$\kappa$ is chosen to be sufficiently small according to Lemma~\ref{lem:nu}. 

(d) $\gamma^*_{j_{\ell} }$ has an $h$-cone point at the lattice approximation $u^*_{j_{\ell}}$
of the mid-point of  the segment $[u_{j_{\ell} }, u_{j_{\ell} +1}]$. 
\smallskip

\noindent
Indeed, property (a) simply means that $\lambda_\ell^*$ is compatible with the decomposition 
\eqref{eq:lambdaStretch}. 
Property (b) implies that
\[
 \sum_x
\lbr 
\phi\lb \ell_{\underline{\rho }}[x] +\ell_{\lambda_\ell^*} [x]\rb
- 
\phi\lb \ell_{\underline{\rho }}[x]\rb \rbr
 \leq 
\sum_x
\1_{\lbr \ell_{\lambda_\ell^*}[x] >0\rbr}
\phi\lb \ell_{\lambda_\ell^*} [x]\rb \leq 
 2 rg_1K 
\phi (1) .
\]
It follows that $a (\lambda^*_\ell |\underline{\rho }) \geq {\rm e}^{- 2rg_1 K(\phi(1)+\log(2d))}\df {\rm e}^{- c_{10} K}$. 

Finally, since all the increments $u_{j_\ell +k +1} - u_{j_\ell +k}$  belong to the 
narrow cone $\calY_h$, by~\eqref{eq:Ycone-sep} properties (c) and (d) imply that $u^*_{j_{\ell}}$
is an $h$-cone point of the whole path $\lambda^*_\ell$.
\smallskip 

Therefore, it only remains to check that there exist paths $\lambda_\ell^*$ which enjoy (a)-(d).  Let
us describe how $\gamma^*_{j_\ell +k}$ in~\eqref{eq:lambdaStretch-star} could be constructed
(see Figure~\ref{fig:path}).

\begin{figure}[t]
\begin{center}
\scalebox{.5}{\input{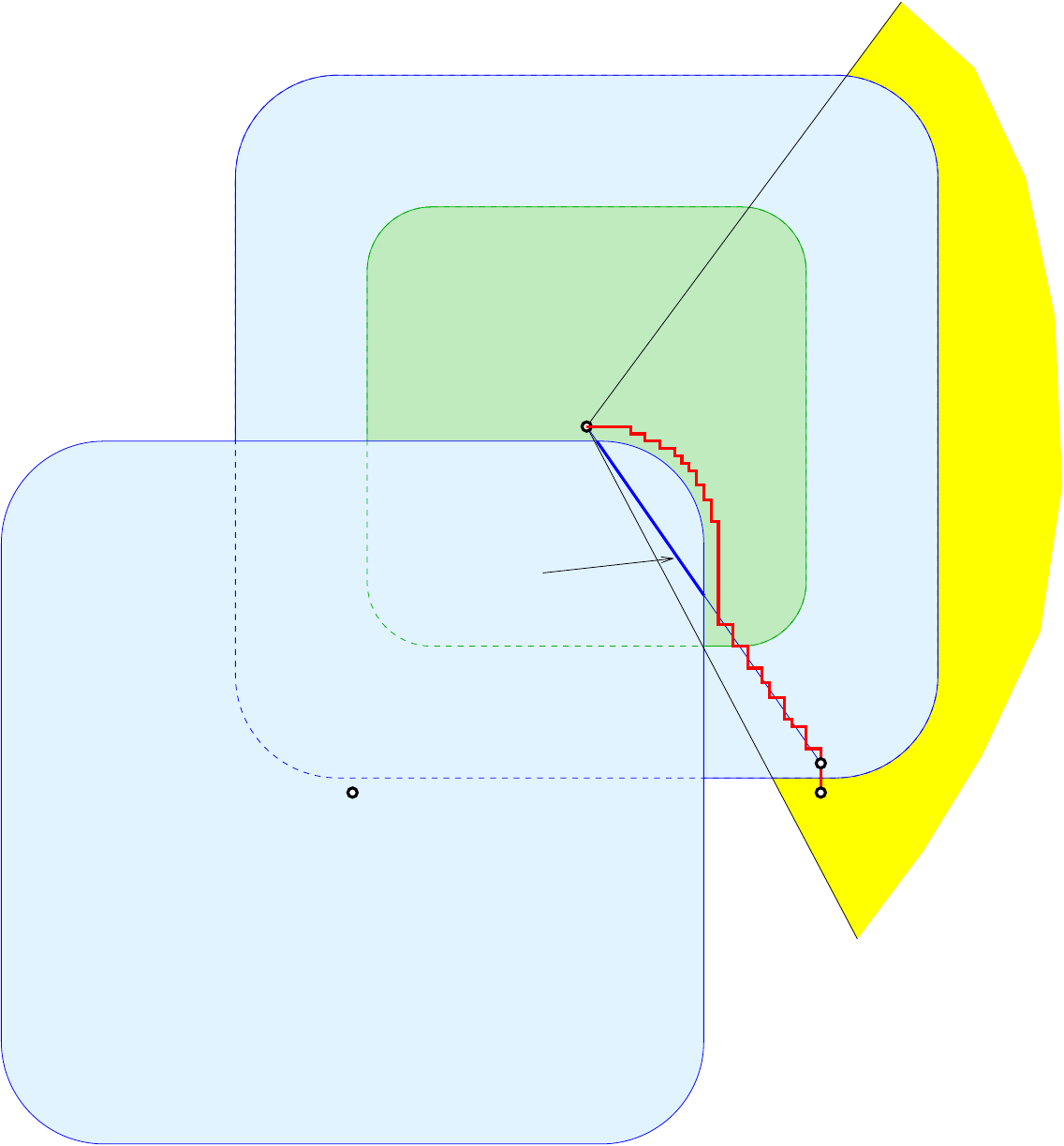_t}}
\end{center}
\caption{Construction of $\gamma^*_{j_\ell +k}$.}
\label{fig:path}
\end{figure}

Lemma~\ref{lem:nu} and lattice symmetries will play a role.  First of all, by construction of the skeletons, 
$u_{j_\ell +k+1}$ has a lattice neighbor $\bar u_{j_\ell +k+1}\in \UaK{K} ( u_{j_\ell +k})$. The edge
$\lbr \bar u_{j_\ell +k+1}, u_{j_\ell +k+1}\rbr$ will be the last step of $\gamma^*_{j_\ell +k}$.  Consider
the segment $\Delta_k\df [u_{j_\ell +k}, \bar u_{j_\ell +k+1}]$. By convexity, $\Delta_k\subset \UaK{K} (u_{j_\ell +k} )$. We claim that there is a self-avoiding lattice approximation
$\gamma_{j_\ell +k}$ of $\Delta_k$ which runs from $u_{j_\ell +k} $ to $\bar u_{j_\ell +k+1}$, satisfies
property (b) and stays inside $\UaK{K} (u_{j_\ell +k} )$.  This follows from convexity and lattice symmetries of the model (and hence of $\UaK{K}$).  By Lemma~\ref{lem:nu}, $\Delta_k$, and hence $\gamma_{j_\ell +k}$,
is well separated from $\cup_{m<k-1}\UaK{u_{j_\ell +m}}$. However, since at this stage we cannot
rule out facets on $\partial\Ua$,  it might happen that 
$\tilde\Delta_k\df \Delta_k\cap \UaK{K}(u_{j_\ell +k-1}) \neq \emptyset$ (see Fig.~\ref{fig:path}).  Thus, $\gamma_{j_\ell +k}$ might violate (a) and, thereby, be 
incompatible with the skeleton construction. In such a case, it should be modified. However, 
by the very same Lemma~\ref{lem:nu}, $\tilde\Delta_k \subset \UaK{\kappa K}(u_{j_\ell +k})$.  It remains
to take any self-avoiding path of minimal length which connects $u_{j_\ell +k}$ to $\gamma_{j_\ell +k}$ outside
$\UaK{K}(u_{j_\ell +k-1})$.  

An additional care is needed for securing property (d) for the path $\gamma^*_{j_\ell}$. To this end, we construct
the lattice approximation $\gamma_{j_\ell}$ as above with an additional requirement that $u^*_{j_\ell}$
 belongs to $\gamma_{j_\ell}$ and is an $h$-cone point of the latter. This is evidently possible. By then, 
\eqref{eq:Ycone-sep} implies that $u^*_{j_\ell}$ is also an $h$-cone point of the modified 
path $\gamma_{j_\ell}^*$.
\qed
\end{proof}
\section{Proofs of the Main Results}
\label{sec:Proofs}
Theorem~\ref{thm:renewal} paves the way for a description of critical 
$d$-dimensional self-attractive random walks in terms of effective $(d-1)$-dimensional  
directed walks with random time steps.
\subsection{Effective Random Walk}
\label{sub:RW}
The logarithmic asymptotics $A (x)\asymp {\rm e}^{-\xi (x)}$ and \eqref{eq:support} imply that the set $\Ka$ is the domain of convergence of $\sum_x A_h (x)$. More precisely, 
\be  
\label{eq:Kshape}
 h\in {\rm int}\lb \Ka\rb\Rightarrow \sum_x A_h (x) <\infty\quad {\rm and}
\quad 
h\not\in\Ka \Rightarrow \sum_x A_h (x) = \infty .
\ee
Together with Theorem~\ref{thm:renewal}, this leads to the following set of properties which 
characterize critical drifts $h\in\partial\Ka$.
\begin{theorem}
 \label{thm:critical-drifts}
Let $h\in\partial\Ka$ and let the cone $\tilde\calY_h$ and the corresponding set of irreducible 
paths $\calT$ be the same as in~\eqref{eq:renewal}.  Then $\bbP_h (\cdot )\df a_h (\cdot )$ is a probability
distribution on $\calT$, that is
\[
 \sumtwo{\gamma\in\calT}{\gamma (0) = 0} a_h (\gamma ) =1 .
\]
Furthermore, the random variables $\sfT = \sfT (\gamma ) = \abs{\gamma }$ (length) and 
$\sfX = \sfX (\gamma ) = \gamma (\sfT )-\gamma (0)$ (displacement) satisfy:

(a)  The support ${\rm supp}(\sfX)\subseteq \tilde \calY_h$ and 
the distribution of $\sfX$  has exponential tails: There exists $\kappa >0$ such that, uniformly 
in $x$, 
\be 
\label{eq:Xdecay}
 \bbP_h \lb \abs{\sfX } \geq x\rb \leqs {\rm e}^{-\kappa x} .
\ee

(b) The distribution of $\sfT$ has a stretched exponential decay: There exists $\kappa_T>0$ such that
uniformly in $t$,
\be 
\label{eq:stretched}
\bbP\lb \sfT \geq t\rb \leqs {\rm e}^{-\kappa_T {t}^{1/3} } .
\ee
\end{theorem}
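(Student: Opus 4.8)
The plan is to read off all three assertions from the renewal identity~\eqref{eq:renewal}, from Theorem~\ref{thm:cone-points}, and from the convex geometry of $\Ka$.

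\emph{The normalization $\mathbf{f}:=\sum_{\gamma\in\calT,\,\gamma(0)=0}a_h(\gamma)=1$.} Let $\mathbf{f}_{\sff},\mathbf{f}_{\sfb}$ be the corresponding total masses of forward-- and backward--irreducible paths; these are finite, since a forward-- (resp.\ backward--) irreducible path from $0$ to $x$ has only one $h$-cone point, hence $\sum_{\gamma\in\calT_{\sff}:\,0\to x}a_h(\gamma)\le A_h(\text{fewer than two $h$-cone points}\mid x)\leqs\mathrm{e}^{-\chi\abs{x}}$ by Theorem~\ref{thm:cone-points}, which is summable in $x$. Summing~\eqref{eq:renewal} over all $x\in\bbZ^d$ (the concatenation constraint then merely asks that the irreducible pieces glue together) identifies $\sum_x\mathrm{RHS}(x)$ with $\mathbf{f}_{\sfb}\mathbf{f}_{\sff}\sum_{N\ge1}\mathbf{f}^{N}$. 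Two elementary facts about the right-hand side of~\eqref{eq:renewal} will be used: $\mathrm{RHS}(x)\le A_h(x)$ for \emph{every} $x$ (the decomposition runs over a subset of paths $0\to x$), and $\mathrm{RHS}(x)=A_h(x)(1+\smo{\mathrm{e}^{-\chi\abs{x}/2}})$ whenever $A_h(x)>\mathrm{e}^{-\chi\abs{x}/2}$. Since $h\in\partial\Ka$, there is a unit vector $g$ in the normal cone of $\Ka$ at $h$, so $\frs_h(\lambda g)=0$ for $\lambda\ge0$; using $A_h(x)\asymp\mathrm{e}^{-\frs_h(x)}$ this gives $A_h(\lfloor n g\rfloor)\geqs1$ for all $n$, whence $\sum_x A_h(x)=\infty$ with the divergence supported on points where~\eqref{eq:renewal} holds. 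Therefore $\mathbf{f}_{\sfb}\mathbf{f}_{\sff}\sum_{N\ge1}\mathbf{f}^{N}=\infty$, which forces $\mathbf{f}\ge1$. For the reverse inequality I would tilt: choose $b$ in the interior of the polar cone of $\tilde\calY_h$ with $\abs{b}$ small. Because $\tilde\calY_h$ has aperture $<\pi$ and every irreducible displacement lies in $\tilde\calY_h$ (Definition~\ref{def:irreducible}), one has $b\cdot\sfX(\gamma)\le0$ for all $\gamma\in\calT\cup\calT_{\sff}\cup\calT_{\sfb}$; and $h+b\in\mathrm{int}(\Ka)$, because $\frs_h(x)>b\cdot x$ for every $x\ne0$ --- automatically on $\tilde\calY_h$, and off $\tilde\calY_h$ because there $\frs_h(x)\geqs\abs{x}$ while $\abs{b}$ is small. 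By~\eqref{eq:Kshape}, $\sum_x\mathrm{e}^{b\cdot x}A_h(x)=\sum_x A_{h+b}(x)<\infty$, so applying $\sum_x\mathrm{e}^{b\cdot x}(\cdot)$ to $\mathrm{RHS}(x)\le A_h(x)$ gives $\mathbf{F}_{\sfb}(b)\mathbf{F}_{\sff}(b)\sum_{N\ge1}\mathbf{F}(b)^{N}<\infty$, where $\mathbf{F}(b):=\sum_{\gamma\in\calT}a_h(\gamma)\mathrm{e}^{b\cdot\sfX(\gamma)}$ and $\mathbf{F}_{\sfb}(b),\mathbf{F}_{\sff}(b)\in(0,\infty)$; hence $\mathbf{F}(b)<1$. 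Replacing $b$ by $tb$ and letting $t\downarrow0$, monotone convergence ($b\cdot\sfX\le0$) yields $\mathbf{F}(tb)\uparrow\mathbf{f}$, so $\mathbf{f}\le1$; combined with the above, $\mathbf{f}=1$ (and in particular $\mathbf{f}<\infty$).

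\emph{Part (a).} The inclusion $\supp(\sfX)\subseteq\tilde\calY_h$ is immediate from Definition~\ref{def:irreducible}. For the tail, $\bbP_h(\sfX=x)=\sum_{\gamma\in\calT:\,0\to x}a_h(\gamma)$, and an irreducible path from $0$ to $x$ has \emph{exactly two} $h$-cone points, hence for $\abs{x}$ large lies in the event $\calE(\delta,x)$ of having fewer than $\delta\abs{x}/K$ cone points. The proof of Theorem~\ref{thm:cone-points} in fact establishes $A_h(\calE(\delta,x)\mid x)\leqs\mathrm{e}^{-\epsilon\abs{x}}$; summing over $\abs{x}\ge R$ absorbs the polynomial factor and gives $\bbP_h(\abs{\sfX}\ge R)\leqs\mathrm{e}^{-\kappa R}$.

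\emph{Part (b).} Here I would exploit the confinement cost of long microscopic paths. Fix $L=L(t):=t^{1/3}$ and split according to $\abs{\sfX}$. By part~(a), $\bbP_h(\sfT\ge t,\ \abs{\sfX}>L)\le\bbP_h(\abs{\sfX}>L)\leqs\mathrm{e}^{-\kappa t^{1/3}}$. On the complementary event, for fixed $x$ with $\abs{x}\le L$ and fixed $s\ge t$, using $\Phi(\gamma)\ge\phi(1)\abs{R[\gamma]}$ and then discarding the endpoint constraint,
\[
\sumtwo{\gamma\in\calT:\,0\to x}{\abs{\gamma}=s}a_h(\gamma)\;\le\;\mathrm{e}^{\abs{h}L}\,E_{\rm SRW}\!\bigl[\mathrm{e}^{-\phi(1)\abs{R_s}}\bigr]\;\leqs\;\mathrm{e}^{\abs{h}L}\,\mathrm{e}^{-c\,s^{d/(d+2)}},
\]
the last bound being the classical Donsker--Varadhan estimate for the range of simple random walk (valid for every $\phi(1)>0$, with $c=c(\phi(1),d)>0$; it follows from the discrete Faber--Krahn inequality and a standard exit-time bound, cf.\ Appendix~\ref{app:RW}). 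Summing over $\abs{x}\le L$ and over $s\ge t$ gives $\bbP_h(\sfT\ge t,\ \abs{\sfX}\le L)\leqs L^{d}\,\mathrm{e}^{\abs{h}L}\,\mathrm{e}^{-(c/2)t^{d/(d+2)}}$; since $d/(d+2)\ge 1/2>1/3$ for $d\ge2$, the stretched-exponential factor dominates both the polynomial and the $\mathrm{e}^{\abs{h}t^{1/3}}$ term once $t$ is large. Combining the two contributions yields $\bbP_h(\sfT\ge t)\leqs\mathrm{e}^{-\kappa_T t^{1/3}}$.

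The step I expect to be the main obstacle is the normalization $\mathbf{f}=1$: one must justify rearranging the (absolutely convergent) multiple sum in~\eqref{eq:renewal} into the geometric-series form, control the $\smo{\cdot}$ error uniformly, and verify $h+b\in\mathrm{int}(\Ka)$ for the tilting vectors --- and it is precisely the two one-sided facts $\mathrm{RHS}(x)\le A_h(x)$ and $\sum_x A_h(x)=\infty$ that make the argument go through without any sharp renewal asymptotics. Parts~(a) and~(b) are then essentially bookkeeping on top of Theorem~\ref{thm:cone-points} and the elementary random-walk estimates.
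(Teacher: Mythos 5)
Your treatment of the normalization and of part (a) is correct and essentially fills in details the paper leaves implicit. For the normalization, the paper only points to \eqref{eq:Kshape} and Theorem~\ref{thm:renewal}; your two one-sided inputs do the job: the domination of the right-hand side of~\eqref{eq:renewal} by $A_h(x)$ (note this uses attractivity {\bf (A)}, so that the product of the weights of the irreducible pieces is bounded by the weight of their concatenation, plus uniqueness of the cone-point decomposition), and the divergence of $\sum_x A_h(x)$ along the conjugate ray (which requires the two-sided bound $A_h(x)\asymp\mathrm{e}^{-\frs_h(x)}$ that the paper asserts without proof --- the one-sided \eqref{eq:Additive} would not suffice). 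The tilting step $h+tb\in\mathrm{int}(\Ka)$ and the monotone-convergence passage $t\downarrow 0$ are sound. Your observation that an irreducible path has \emph{exactly two} cone points, so that \eqref{eq:Xdecay} really rests on the bound for the event $\calE(\delta,x)$ established inside the proof of Theorem~\ref{thm:cone-points} rather than on \eqref{eq:cone-points} verbatim, is a correct and slightly more careful reading than the paper's one-line derivation.

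Part (b) is where you genuinely diverge, and it is the weak point. The paper never touches the potential there: an irreducible path with $\abs{\sfX}<\epsilon t^{1/3}$ is, by Definition~\ref{def:irreducible}, confined to the diamond $\sfD_h(0;\sfX)$, whose diameter is $\leqs\epsilon t^{1/3}$ because $\tilde\calY_h$ has aperture less than $\pi$; the elementary estimate \eqref{eq:ProbStayLongInSmallSet} then yields $\mathrm{e}^{-\kappa' t/(\epsilon t^{1/3})^2}=\mathrm{e}^{-\kappa'\epsilon^{-2}t^{1/3}}$, which beats the prefactor $\mathrm{e}^{\epsilon\abs{h}t^{1/3}}$ once $\epsilon$ is small. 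You discard exactly this confinement and instead invoke $E_{\rm SRW}\bigl[\mathrm{e}^{-\nu\abs{R_s}}\bigr]\leqs\mathrm{e}^{-cs^{d/(d+2)}}$. That inequality is true, but it is the hard (Donsker--Varadhan) direction of the range problem; it is not in Appendix~\ref{app:RW}, and the sketch ``Faber--Krahn plus an exit-time bound'' does not close as stated: one must also pay an entropy factor $\mathrm{e}^{O(\rho)}$ for the number of connected candidate ranges of size $\rho$, which competes with the spectral gain at the optimal scale $\rho\sim s^{d/(d+2)}$ and is only beaten by taking $\rho=\delta s^{d/(d+2)}$ with $\delta$ small. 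Your bookkeeping moreover genuinely needs the exponent $d/(d+2)\ge 1/2>1/3$: the soft range bound one gets from \eqref{eq:ProbStayLongInSmallSet} alone has exponent $1/3$, and with your choice $L=t^{1/3}$ (no small $\epsilon$) that would not dominate $\mathrm{e}^{\abs{h}t^{1/3}}$. So either import Donsker--Varadhan explicitly (and complete the entropy argument), or, more in the spirit of the paper, reinstate the diamond confinement, which reduces part (b) to a direct application of \eqref{eq:ProbStayLongInSmallSet}.
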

Exponential decay of $\bbP_h \lb \abs{\sfX } \geq x\rb$ is the content of~\eqref{eq:cone-points}.
In order to check~\eqref{eq:stretched}, we first define, given $v-u\in\tilde\calY_h$, the diamond
shape
\be 
\label{eq:diamond}
\sfD_h (u; v ) \df (u+\tilde\calY_h) \cap (v- \tilde\calY_h) .
\ee
Then,
\[
 \bbP_h\lb
\sfT\geq t\rb
\leq \bbP_h(\abs{X} \geq \epsilon t^{1/3}) + \bbP_h(\abs{X} < \epsilon t^{1/3}, T\geq t),
\]
and, using~\eqref{eq:ProbStayLongInSmallSet}, the last term is bounded above by
\[
\sum_{\abs{x}\leq \epsilon t^{1/3}} e^{h\cdot x} \sumtwo{\gamma\in\calA_x}{\gamma\subseteq \sfD_h (0; x )}
\sfp_d (\gamma ) \1_{\lbr \abs{\gamma}\geq t\rbr}
\leqs {\rm e}^{\epsilon \abs{h} t^{1/3}} (\epsilon t^{1/3})^d \textrm{e}^{-\kappa^\prime t/(\epsilon t^{1/3})^{2}},
\]
for some geometric  {constant} $\kappa^\prime = \kappa^\prime (d)>0$. 
\begin{remark}
 \label{rem:remBoundary}
Note that a straightforward modification of the above arguments implies exponential and, respectively, 
stretched exponential decay of weights on backward and forward irreducible paths $\gamma_\sfb$ 
and $\gamma_\sff$: For $*=\sfb ,\sff$, 
\be 
\label{eq:remBoundary}
\sum_{\gamma_*\in\calT_*} a_h (\gamma_* )\1_{\lbr \abs{\sfX (\gamma_* )}\geq x\rbr}
\leqs {\rm e}^{-\kappa x}\quad {\rm and}\quad 
\sum_{\gamma_*\in\calT_*} a_h (\gamma_* )\1_{\lbr {\sfT (\gamma_* )}\geq t\rbr}
\leqs {\rm e}^{-\kappa_T t^{1/3}}. 
\ee
\end{remark}

\subsection{Geometry of $\Ka$ and $\Ua$}
\label{sub:geometry}
Once 
~\eqref{eq:Kshape} and~\eqref{eq:Xdecay} are established, the geometry of $\partial \Ka$ 
can be studied exactly as in the super-critical case  
\cite{IoffeVelenik-Annealed}. Namely, 
let $h\in\partial\Ka$. 
Then the shape of $\partial\Ka$
in a neighborhood of $h$ can be described as follows
(see Lemma~4.1  in \cite{IoffeVelenik-Annealed}): There exists $\delta = \delta (h) >0$ such that,
for any $f\in\bbR^d$ with $\abs{f} <\delta$, 
\be 
\label{eq:pKa}
h+f \in \partial\Ka ~ \Leftrightarrow ~  \bbE_h {\rm e}^{f\cdot \sfX (\gamma )} =1 .
\ee
Since $\tilde\calY_h$ is a proper cone with a non-empty interior, the distribution 
of $\sfX $ under $\bbP_h$ is genuinely $d$-dimensional, in particular the covariance matrix
of $X$ is non-degenerate.
Hence, the analytic implicit function theorem (see e.g.~\cite{Kaup}) applies.
The claim of Theorem~\ref{thm:B} is an immediate consequence.
By duality, the boundary $\partial\Ua$ is also locally analytic and has a uniformly strictly 
positive Gaussian curvature.

\subsection{Partition function $A_h^n$}
Let us fix $\epsilon\in (0,1/3 )$. We decompose $A_h^n$ as 
\be  
\label{eq:Ahn}
A_h^n = \sum_{x\in \UaK{n^\epsilon}} A_h^n (x) + \sum_{x\not\in\UaK{n^\epsilon}} A_h^n (x) .
\ee
By Theorem~\ref{thm:renewal} the second term is, up to a correction of order $
{\rm e}^{-O(\chi n^\epsilon/2)}$, 
\be 
\label{eq:AhnMain}
 \sumtwo{\gamma_\sff\in\calT_\sff}{\gamma_\sfb\in\calT_\sfb}
a_h (\gamma_\sfb )a_h (\gamma_\sff )
\sum_{N=1}^\infty \bbP^\otimes_{h}\lb T_1+\cdots +T_N = 
n- T(\gamma_\sfb) - T (\gamma_\sff )\rb .
\ee
By the usual renewal theorem, 
\[
 \lim_{m\to\infty}
\sum_{N=1}^\infty \bbP^\otimes_{h}\lb T_1+\cdots +T_N = m\rb = \frac{1}{\bbE_h T} .
\]
On the other hand, in view of the stretched exponential decay of the irreducible weights 
~\eqref{eq:remBoundary} (and by lattice symmetries if we choose $\tilde\calY_{-h} = -\tilde\calY_h$,
which we shall do),
\be 
\label{eq:bondary-weights}
\sum_{t=1}^\infty \sum_{T (\gamma_\sfb )=t} a_h (\gamma_\sfb ) = 
\sum_{t=1}^\infty \sum_{T (\gamma_\sff )=t}a_h (\gamma_\sff )  \df C_h <\infty .
\ee
It follows that the second term in~\eqref{eq:Ahn} satisfies:
\be 
\label{eq:second}
\lim_{n\to\infty} \sum_{x\not\in\UaK{n^\epsilon}} A_h^n (x) = \frac{C_h^2 }{\bbE_h T} .
\ee
It remains to show that the first term in~\eqref{eq:Ahn} is negligible:
\label{sub:An}
\begin{lemma}
\label{lem:small-x}
Fix $\epsilon \in (0, 1/3)$. Then, for all $n$ sufficiently large, 
\begin{equation}
 \label{eq:small-x}
A^n_h (x) \leqs {\rm e}^{-c_{11} n^\epsilon} ,
\end{equation}
uniformly in $h\in\partial\Ka$ and $x\in \UaK{n^\epsilon}$ .
\end{lemma}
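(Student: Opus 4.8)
The plan is to prove the much stronger bound $A_h^n(x)\leqs {\rm e}^{-cn^{1/3}}$ by a direct range decomposition that does not use the renewal structure at all; the hypothesis $\epsilon<1/3$ will enter only at the end, to absorb the drift factor ${\rm e}^{h\cdot x}$. First, since $\sfX(\gamma)=x$ for every $\gamma$ contributing to $A_h^n(x)$, and since $h\cdot x\leq\xi(x)\leq n^\epsilon$ whenever $h\in\partial\Ka$ and $x\in\UaK{n^\epsilon}$, one has, after dropping the endpoint constraint,
\[
A_h^n(x)\;=\;{\rm e}^{h\cdot x}\sumtwo{\gamma:0\to x}{\abs{\gamma}=n}{\rm e}^{-\Phi(\gamma)}\sfp_d(\gamma)\;\leq\;{\rm e}^{n^\epsilon}\sumtwo{\gamma:\gamma(0)=0}{\abs{\gamma}=n}{\rm e}^{-\Phi(\gamma)}\sfp_d(\gamma).
\]
I then split the last sum according to the size $r=\abs{R[\gamma]}$ of the range of $\gamma$, at a threshold $r_0=\lfloor n^{1/3}/M\rfloor$, where $M=M(d)$ is a large constant fixed below.

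For paths of large range, $r>r_0$, the potential does the work: since $\phi$ is non-decreasing with $\phi(1)>0$, we have $\Phi(\gamma)=\sum_x\phi(\ell_\gamma[x])\geq\phi(1)\abs{R[\gamma]}>\phi(1)r_0$, while $\sum_{\gamma:\gamma(0)=0,\abs{\gamma}=n}\sfp_d(\gamma)=1$; this part is therefore at most ${\rm e}^{-\phi(1)r_0}$.

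For paths of small range, $r\leq r_0$, the relevant contribution is at most $P_{\rm SRW}(\abs{R_n}\leq r_0)$. On the event $\{\abs{R_n}\leq r_0\}$ the walk remains, throughout its $n$ steps, inside the connected set $A=R[\gamma]\ni 0$ with $\abs{A}\leq r_0$, hence of diameter at most $r_0$. By Kesten's bound on lattice animals (\cite{Kesten}, p.~85, already used in the proof of Proposition~\ref{prop:surcharge}) there are at most ${\rm e}^{c_6 r_0}$ such sets, and for a fixed one the probability of staying inside it for $n$ steps is $\leqs {\rm e}^{-\kappa n/{\rm diam}(A)^2}\leq {\rm e}^{-\kappa n/r_0^2}$ by the elementary confinement estimate~\eqref{eq:ProbStayLongInSmallSet} (the same one behind~\eqref{eq:stretched}). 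Thus $P_{\rm SRW}(\abs{R_n}\leq r_0)\leqs {\rm e}^{c_6 r_0-\kappa n/r_0^2}$, whose exponent is $\asymp (c_6/M-\kappa M^2)n^{1/3}$ and is $\leq-\tfrac{\kappa M^2}{2}n^{1/3}$ once $M^3\geq 2c_6/\kappa$.

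Combining the two ranges gives $A_h^n(x)\leqs {\rm e}^{n^\epsilon}\bigl({\rm e}^{-\tfrac{\kappa M^2}{2}n^{1/3}}+{\rm e}^{-\phi(1)r_0}\bigr)\leqs {\rm e}^{-c_{11}n^{1/3}}\leq {\rm e}^{-c_{11}n^\epsilon}$ for all $n$ large, uniformly in $h\in\partial\Ka$ and $x\in\UaK{n^\epsilon}$, because $\epsilon<1/3$. The one delicate point, and the reason the exponent $1/3$ appears exactly as in~\eqref{eq:stretched}, is the small-range estimate: one must balance the entropic gain ${\rm e}^{-\kappa n/r^2}$ coming from confinement against the entropic cost ${\rm e}^{c_6 r}$ of specifying the confining set, which pins the optimal threshold at $r_0\asymp n^{1/3}$ and requires the confinement probability to decay only polynomially in the diameter — precisely what~\eqref{eq:ProbStayLongInSmallSet} provides.
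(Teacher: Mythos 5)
Your proof is correct, and it takes a genuinely different route from the paper's. The paper splits $A^n(x)$ \emph{spatially}: paths confined to a ball $\UaK{Rn^\epsilon}$ are handled by the confinement estimate~\eqref{eq:ProbStayLongInSmallSet}, while paths that escape are controlled via the exponential decay of the critical two-point function $A(z)\leqs {\rm e}^{-\xi(z)}$ together with the Green function bound~\eqref{eq:GFLambda}, taking $z$ to be the farthest point of $\gamma$. You instead split by the \emph{size of the range} $R[\gamma]$: a large range is killed energetically by $\Phi(\gamma)\geq\phi(1)\abs{R[\gamma]}$, while a small range is killed entropically by the union bound over Kesten lattice animals combined with~\eqref{eq:ProbStayLongInSmallSet}. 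Your route avoids the two-point function and the Green function entirely, needs only $\phi(1)>0$, and produces the sharper exponent $n^{1/3}$ directly; the paper's choice of confinement radius $Rn^\epsilon$ gives only $n^\epsilon$, which is all the lemma requires. As you observe, both proofs rest in the end on the same range-versus-confinement tension that drives Lemma~\ref{lem:range} and the stretched-exponential tail~\eqref{eq:stretched}, and the optimal balancing of $r$ against $n/r^2$ is precisely what pins the threshold at $r_0\asymp n^{1/3}$. The only point worth spelling out a little more is that every vertex of a connected lattice animal $A$ with $\abs{A}\leq r_0$ is within $\ell_1$-distance $r_0$ of $\Zd\setminus A$, so that~\eqref{eq:ProbStayLongInSmallSet} applies with $L=r_0$; this is immediate (in $d\geq 2$ the $\ell_1$-ball of radius $r_0$ has more than $r_0$ points), and once noted the argument is complete.
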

\begin{proof}[of Lemma~\ref{lem:small-x}]
By definition, $A_h^n (x) = {\rm e}^{h\cdot x}A^n (x)$. 
Let us pick a number $R$ large enough and decompose
\be 
\label{eq:two-sums}
A^n (x) = \sumtwo{\gamma :0\mapsto x}{\gamma\subseteq \UaK{Rn^\epsilon}}
\sfp_d (\gamma ) {\rm e}^{-\Phi (\gamma )}\1_{\lbr\abs{\gamma} = n\rbr} +
\sumtwo{\gamma :0\mapsto x}{\gamma\not\subseteq \UaK{Rn^\epsilon}}
\sfp_d (\gamma )
{\rm e}^{-\Phi (\gamma )}\1_{\lbr\abs{\gamma} = n\rbr} .
\ee
The first sum in~\eqref{eq:two-sums} is bounded above by ${\rm e}^{-c_{12}n^{1-2\epsilon}}$ using~\eqref{eq:GFLambda}.
Since  $\abs{h\cdot x}\leqs n^\epsilon$ uniformly in $x\in\UaK{n^\epsilon}$, and since $\epsilon <1-2\epsilon$,
 this indeed complies with the right-hand side of~\eqref{eq:small-x}.

As for the second term define $\tau = \tau (\gamma ) \df \min\setof{\ell}{\xi (\gamma (\ell ) )= \max _m
\xi (\gamma (m ))}$, and, accordingly, define $z = z (\gamma ) \df \gamma (\tau )$. We can then bound the
second term in~\eqref{eq:two-sums} by
\[
 \sum_{z\,:\,\xi (z ) \geq Rn^\epsilon} A \lb z (\gamma ) = z |x \rb .
\]
However, $A\lb  z (\gamma ) = z | x\rb \leq A (z) G_{\UaK{\xi (z)}} (x ,x)\leqs {\rm e}^{-\xi (z )}\log\xi (z)$, using~\eqref{eq:GFLambda} once more.
Since $\abs{h\cdot x}\leqs n^\epsilon$ and we are summing with respect to $z$ with $\xi (z)\geq Rn^\epsilon$,
the second term in~\eqref{eq:two-sums} also complies with the right-hand side of
\eqref{eq:small-x}. 
\qed
\end{proof}

Altogether we have proved:
\begin{theorem}
 The asymptotics of the critical partition function is given by
\be 
\label{eq:criticalAhn}
\lim_{n\to\infty} A_h^n = \frac{C_h^2 }{\bbE_h T},
\ee
uniformly in $h\in\partial\Ka$. 
\end{theorem}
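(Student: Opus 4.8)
The plan is simply to assemble the three ingredients prepared just above. Fix $\epsilon\in(0,1/3)$ and start from the decomposition~\eqref{eq:Ahn}, $A_h^n=\sum_{x\in\UaK{n^\epsilon}}A_h^n(x)+\sum_{x\notin\UaK{n^\epsilon}}A_h^n(x)$. The first sum is negligible: by Lemma~\ref{lem:small-x} every summand is $\leqs{\rm e}^{-c_{11}n^\epsilon}$, while $\#\bigl(\UaK{n^\epsilon}\cap\bbZ^d\bigr)$ grows only polynomially in $n$, so $\sum_{x\in\UaK{n^\epsilon}}A_h^n(x)\to0$; since the constants in Lemma~\ref{lem:small-x} are the uniform constants produced in Section~\ref{sec:CG}, this holds uniformly in $h\in\partial\Ka$.

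For the main sum I would invoke Theorem~\ref{thm:renewal}. For those $x\notin\UaK{n^\epsilon}$ with $A_h(x)>{\rm e}^{-\chi\abs{x}/2}$, expand $A_h^n(x)$ through the irreducible decomposition $\gamma_\sfb\amalg\gamma_1\amalg\cdots\amalg\gamma_N\amalg\gamma_\sff$; summing over such $x$ and over all lengths, and using that under $\bbP_h$ the middle pieces $\gamma_i\in\calT$ are i.i.d.\ with length variable $T$ of finite mean (Theorem~\ref{thm:critical-drifts}(b)), one obtains, up to a multiplicative/additive error of order ${\rm e}^{-O(\chi n^\epsilon/2)}$, the quantity~\eqref{eq:AhnMain}, namely $\sum_{\gamma_\sfb,\gamma_\sff}a_h(\gamma_\sfb)a_h(\gamma_\sff)\sum_{N\geq1}\bbP_h^\otimes\bigl(T_1+\cdots+T_N=n-T(\gamma_\sfb)-T(\gamma_\sff)\bigr)$. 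The renewal theorem gives $\sum_{N\geq1}\bbP_h^\otimes(T_1+\cdots+T_N=m)\to1/\bbE_h T$, while the stretched-exponential bounds~\eqref{eq:remBoundary} make $\sum_t\sum_{T(\gamma_\sfb)=t}a_h(\gamma_\sfb)=\sum_t\sum_{T(\gamma_\sff)=t}a_h(\gamma_\sff)=C_h$ finite, so dominated convergence yields $\sum_{x\notin\UaK{n^\epsilon}}A_h^n(x)\to C_h^2/\bbE_h T$, which is~\eqref{eq:second}. Adding the two contributions gives $\lim_nA_h^n=C_h^2/\bbE_h T$.

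The only step requiring genuine care — and the one I expect to be the (mild) main obstacle — is the \emph{uniformity in $h\in\partial\Ka$} of this limit. It reduces to a uniform renewal statement, $\sup_{h\in\partial\Ka}\bigl|\sum_{N\geq1}\bbP_h^\otimes(T_1+\cdots+T_N=m)-1/\bbE_h T\bigr|\to0$ as $m\to\infty$, together with uniform boundedness of $h\mapsto\bbE_h T$ and $h\mapsto C_h$; the latter two follow from the fact that $\kappa_T$ in~\eqref{eq:stretched}--\eqref{eq:remBoundary} is a geometric constant. For the uniform renewal theorem one notes that the law of $T$ under $\bbP_h$ has span $1$ uniformly in $h$: the single lattice edge in the direction $\sfe_h\in\mathrm{int}(\tilde\calY_h)$ (property (i) of the choice of $\tilde\nu$) is an irreducible path of length $1$, and its weight $a_h(\cdot)={\rm e}^{-2\phi(1)+h\cdot\sfe_h}/(2d)$ is bounded below by a positive constant independent of $h$, since $\partial\Ka$ is compact and $\sfe_h$ ranges over finitely many lattice directions. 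Combined with the uniform tail bound on $T$, a standard coupling argument then gives the renewal convergence at a rate independent of $h$. Everything else is bookkeeping on top of Theorems~\ref{thm:renewal} and~\ref{thm:critical-drifts}.
\qed
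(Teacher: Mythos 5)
Your proposal is correct and follows essentially the same route as the paper: decompose via~\eqref{eq:Ahn}, kill the near-origin sum with Lemma~\ref{lem:small-x}, and reduce the far sum through Theorem~\ref{thm:renewal} to~\eqref{eq:AhnMain}, finishing with the renewal theorem and~\eqref{eq:bondary-weights}. Your discussion of uniformity in $h\in\partial\Ka$ (span $1$ via a single lattice step in $\mathrm{int}(\tilde\calY_h)$ with weight bounded below, plus the uniform tail bounds) is a welcome addition — the paper leaves this point implicit.
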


\subsection{Existence of the limiting velocity} 
It should be obvious from the above discussion that 
for any critical $h\in\partial\Ka$, 
the natural candidate for the 
limiting velocity is 
\be  
\label{eq:l-velocity}
v (h) \df \lim_{n\to\infty} \bbA_h^n\bigl(\frac X n\bigr) = \frac{\bbE_h \sfX}{\bbE_h \sfT}.
\ee
In view of~\eqref{eq:remBoundary}, \eqref{eq:bondary-weights} and~\eqref{eq:criticalAhn} we may, at this level of resolution,
ignore the boundary pieces $\gamma_\sfb$ and $\gamma_\sff$ in the decomposition~\eqref{eq:renewal}.
Therefore, we need to check that
\be 
\label{eq:EfVelocity}
\bbE_h \sfT\lim_{n\to\infty} \sum_{N\leq n} \bbE_h^\otimes  \Bigl( \frac1{n} \sum_1^N \sfX_i ~; ~\sum_1^N \sfT_i = n\Bigr)
= \frac{\bbE_h \sfX}{\bbE_h \sfT}. 
\ee
By exchangeability, 
\[
 \bbE_h^\otimes  \Bigl( \frac1{n} \sum_1^N \sfX_i ~; ~\sum_1^N \sfT_i = n\Bigr) = 
\frac{N}{n} \sum_{t <n}\bbE_h \bigl( \sfX ; \sfT= t\bigr) \bbP_h^\otimes\Bigl( \sum_1^{N-1} \sfT_i = n -t \Bigr) .
\]
By~\eqref{eq:stretched}, ${|\bbE_h \lb \sfX ; \sfT = t\rb|} \leqs t{\rm e}^{-\kappa_Tt^{1/3}}$.  On the other
hand, we have
\[
 \max_{t >0}\sum_{N\leq n} \frac{N}{n}\bbP^\otimes_h\Bigl( \sum_1^{N-1}\sfT_i =n-t\Bigr) \leq \max_{t >0}\bbP^\otimes_h\Bigl(\exists N\geq 0\,:\, \sum_1^{N}\sfT_i = n-t \Bigr) \leq 1 ,
\]
uniformly in $n$. Moreover, for every $t$ fixed, as $n\to\infty$,
\be  
\label{eq:Novern}
 \sum_N  \frac{N}{n}  \bbP^\otimes_h\Bigl( \sum_1^{N-1}\sfT_i =n-t\Bigr) = 
\frac{\lb 1- \frac{t}{n}\rb}{\lb \bbE_h \sfT\rb^2} \lb 1+\smo{1}\rb .
\ee
Indeed, the typical number of steps $N$ required to produce the total $\sfT$-length $(n-t)$ is $(n-t)/\bbE_h\sfT$.
By a stretched LD upper bound (see e.g. Lemma~2.1 in~\cite{Denisov}),
\be  
\label{eq:LDStretched}
\bbP_h^\otimes \Bigl( \bigl|\sum_1^N \sfT_i - N\bbE_h\sfT\bigr|> m \Bigr) \leq 
N{\rm exp}\lbr -c_{13} \frac{m}{\sqrt{N}}-\kappa_T m^{1/3}\rbr . 
\ee
Therefore, one may ignore terms on the left-hand side of~\eqref{eq:Novern} with
\[
\abs{N- (n-t)/\bbE_h \sfT} >N^{1/2 +\delta} .
\]
 For the remaining terms, $N/n = (1-t/n )/\bbE_h\sfT (1 +
\smo{1})$, and~\eqref{eq:Novern} follows by the usual renewal theory.
Since $\sum_t \bbE_h \bigl( \sfX ; \sfT= t\bigr)  = \bbE_h \sfX$, ~\eqref{eq:EfVelocity} follows.

\subsection{Law of large number}
We turn to the proof of~\eqref{eq:LLN}. By~\eqref{eq:Xdecay},
\[
 \log \bbP_h^\otimes\Bigl( \bigl|\sum_1^N \sfX_i - 
N\bbE_h\sfX\bigr|>N^{1/2 +\delta^\prime}\Bigr) \leqs -N^{2\delta^\prime } .
\]
By~\eqref{eq:LDStretched}
\[
 \log \bbP_h^\otimes\Bigl( \bigl|\sum_1^N \sfT_i - N\bbE_h\sfT\bigr|>N^{1/2 +\delta}\Bigr) \leqs 
-N^{\min\lbr \delta , \frac{1+2\delta}{6}\rbr }.
\]
It remains to take $0 <\delta <\delta^\prime $, and~\eqref{eq:LLN} follows.

\subsection{Central Limit Theorem}
\label{sub:CLT}
Let us now turn to the proof of~\eqref{eq:CLT}.
Let $\theta\in\bbR^d$. By~\eqref{eq:remBoundary}, \eqref{eq:bondary-weights} and~\eqref{eq:criticalAhn},
the characteristic function
\begin{multline*}
\frac{1}{A_h^n}\sum_xA_h^n (x){\rm e}^{\frac{i\theta\cdot (x-nv)}{\sqrt{n}}}\\
 = \bbE_h \sfT \sum_x\sum_N\bbP^\otimes_h 
\Bigl( \sum_1^N\sfX_i = x ;\sum_1^N\sfT_i = n\Bigr)
{\rm e}^{\frac{i\theta\cdot (x-nv)}{\sqrt{n}}}\lb 1+\smo{1}\rb .
\end{multline*}
Consider the $(d+1)$-dimensional renewal relation:
\[
 \frt (x,n) \df \sum_N\bbP^\otimes_h 
\Bigl( \sum_1^N\sfX_i = x ;\sum_1^N\sfT_i = n\Bigr) = \sum_{y,m}\frt (x-y ,n-m )\bbP_h \lb \sfX = y; \sfT=m\rb .
\]
Under $\bbP_h$ the vector $\lb\sfX ,\sfT \rb$ has a proper $(d+1)$-dimensional distribution with
exponential tails in $\sfX$ and stretched exponential tails in $\sfT$. 
Hence classical multi-dimensional
renewal theory (e.g. \cite{Stam}) applies. Namely, in our notation, 
Theorem~3.2 in~\cite{Stam} implies: Let $\Sigma$ be the $d$-dimensional covariance matrix
 (under $\bbP_h$) of  $\lb \sfX - v (h )\sfT\rb/\sqrt{\bbE_h \sfT}$. 
Set
\[
 W_h (x ,n ) = \frac{1}{\sqrt{(2\pi n)^d{\rm det}\lb \Sigma\rb} } 
{\rm exp}\Bigl\{ -\frac{1}{2n}\Sigma^{-1} (x- n v(h))\cdot (x-n v (h))\Bigr\}
\]
Then, 
\begin{equation}
 \label{eq:Stam}
\lim_{n\to\infty}
\Bigl(  \frt (x, n) - \frac{1}{\bbE_h\sfT}
W_h (x,n) 
\Bigr)  = 0
\end{equation}
uniformly in $x\in\bbZ^d$. ~\eqref{eq:CLT} follows.

\subsection{First order transition at critical drifts}
\label{sub:first}
Let $h\in\bar\Ka^c$.
For any $g\in\bar\Ka^c$ the trajectories $\gamma$ under $\bbA_g^n$  have the very same irreducible
structure as in~\eqref{eq:renewal}.   In fact, for all 
such $g$ which in addition  are sufficiently close to $h$, 
it is possible to use the very same
forward cone $\calY_h$ and, consequently, the very same set 
of irreducible paths $\calT$.  Therefore, our results here
(for critical drifts) and the analysis of super-critical drifts~\cite{IoffeVelenik-Annealed}
imply  that 
for each $g\in\bar\Ka^c$ there exists a number $\lambda = \lambda (g) \in [0,\infty )$
such that for all $g$ sufficiently close to $h$, the weights 
\[
 a_{g} (\gamma ) \df a_h (\gamma ){\rm e}^{(g- h)\cdot\sfX (\gamma ) - (\lambda (g ) 
-\lambda (h) ) \sfT (\gamma )}
\]
give rise to a probability distribution $\bbP_{g}$ on the set $\calT$ of irreducible trajectories. 
The function $g\to\lambda (g )$ is   convex  \cite{Zerner,IoffeVelenik-Annealed} and hence continuous. 
Furthermore $\lambda (\cdot )\equiv 0$ on $\partial \Ka$. 
As in~\eqref{eq:l-velocity}, $v(g ) = \bbE_{g }\sfX/\bbE_{g }\sfT$.
In view of  exponential decay of the tails of $\sfX$ under $\bbP_h(.)$ for $h\not\in \Ka$ 
\cite{IoffeVelenik-Annealed}, and in view 
of the tails estimates \eqref{eq:Xdecay} and \eqref{eq:stretched} on $\sfX$ and $\sfT$ under 
$\bbP_h(.)$ in the case of critical drifts $h\in\partial\Ka$, we infer that
\[
 \lim_{g\to h}\bbE_{g} \lb \sfX ,\sfT\rb = \bbE_{h} \lb \sfX ,\sfT\rb ,
\]
and the claim follows. 

\subsection{Crossing random walks} 
The proof of Theorem~\ref{thm:C} goes along the lines of the proof of Theorem~\ref{thm:A}. It is
actually even slightly simpler since we do not need to control ``short'' walks as in~\eqref{eq:two-sums}
and may directly work with the irreducible decomposition~\eqref{eq:renewal}.

\appendix
\section{Some estimates for the simple random walk}
\label{app:RW}
We collect in this appendix some standard SRW estimates that are used in the main text. Remember that we denote by $G_\Lambda(0,x)$ the Green function of the SRW starting at $0$ and killed as it exits the box $\Lambda\subset\bbZ^d$, i.e.,
\[
G_\Lambda(0,x) = E_{\rm SRW}^0 \bigl[ \sum_{n=0}^{\tau_\Lambda-1} \1_{\{S_n=x\}} \bigr],
\]
with $E_{\rm SRW}^y$ the distribution of the SRW $(S_n)_{n\geq 0}$ starting at $y$, and $\tau_\Lambda = \inf\{n\geq 0\,:\, S_n\not\in\Lambda\}$. We shall use the following estimates.
\begin{proposition}
Assume that $0,x\in\Lambda\subset\Zd$. Then, uniformly in such $x$ and $\Lambda$,
\begin{enumerate}
 \item
\begin{equation}
G_\Lambda(0,x) \leq G_\Lambda(x,x) \leqs
\begin{cases}
\log d(x,\Zd\setminus\Lambda)	&	(d=2),\\
1				&	(d\geq 3).
\end{cases}
\label{eq:GFLambda}
\end{equation}
\item Let $\Lambda\subset\Zd$ containing $0$ be such that $d(x,\Zd\setminus\Lambda)\leq L$ for all $x\in\Lambda$. Then, in any dimension,
\begin{equation}
\log P_{\rm SRW}^x(\tau_\Lambda>n) \leqs n/L^2,
\label{eq:ProbStayLongInSmallSet}
\end{equation}
uniformly in $x\in\Lambda$.
\item Let $\calC$ be a bounded convex subset of $\Rd$, centrally symmetric w.r.t. $0$ and $\tau_C=\inf\setof{n\geq 1}{S_s\not\in\calC}$. Then,
\begin{equation}
\inf_{y\in\Lpartial\calC} P_{\rm SRW}^0 (S_{\tau_C}=y) \simeq \sup_{y\in\Lpartial\calC} P_{\rm SRW}^0 (S_{\tau_C}=y).
\label{eq:EssentiallyUniformExitProb}
\end{equation}
\item Let $\calC$ be a bounded subset of $\Rd$, containing $0$ in its interior, and $\tau_{N}=\inf\setof{n\geq 1}{S_s\not\in N\calC}$. Then, for $N$ large enough,
\begin{equation}
E_{\rm SRW}^0 \#\bigl(\setof{S_k}{0\leq k\leq \tau_{2N}} \cap N\calC \bigr) \geqs
\begin{cases}
N^2/\log N	&	(d=2),\\
N^2		&	(d\geq 3).
\end{cases}
\label{eq:LowerBoundRange}
\end{equation}
\end{enumerate}
\end{proposition}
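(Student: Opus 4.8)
I would derive all four bounds from classical facts about the simple random walk — the planar potential kernel $a(\cdot)$ (with $a(z)=\tfrac2\pi\log|z|+O(1)$ and $a(0)=0$), the transient asymptotics $G_{\bbZ^d}(x,y)\asymp|x-y|^{2-d}$ for $d\geq3$, monotonicity of the Green function in the domain, and the Harnack inequality for discrete harmonic functions — applied item by item. For \eqref{eq:GFLambda}: the inequality $G_\Lambda(0,x)\leq G_\Lambda(x,x)$ is the strong Markov property at the first visit of $x$; the diagonal term is controlled by domain monotonicity, being at most $G_{\bbZ^d}(x,x)=O(1)$ when $d\geq3$, and, when $d=2$, at most $G_{\bbZ^2\setminus\{w\}}(x,x)=2a(x-w)=\tfrac4\pi\log R+O(1)$, where $w\in\bbZ^2\setminus\Lambda$ realizes $R:=d(x,\bbZ^2\setminus\Lambda)$ (so that $\Lambda\subseteq\bbZ^2\setminus\{w\}$) and the last identity is the classical value of the Green function of the planar walk killed at a single site.

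For \eqref{eq:ProbStayLongInSmallSet} I would first note that $q(n):=\sup_{x\in\Lambda}P_{\rm SRW}^x(\tau_\Lambda>n)$ is submultiplicative, $q(n+m)\leq q(n)q(m)$, so it suffices to exhibit a dimensional constant $C$ with $q(CL^2)\leq\tfrac12$. In the situations where the estimate is applied, $\Lambda$ is convex of inradius $\leq L$, hence — a convex body of inradius $r$ has minimal width $\lesssim_d r$ — contained in a slab $\{z:\alpha\leq z\cdot v\leq\beta\}$ with $\beta-\alpha\lesssim L$. Taking $g(z)=(z\cdot v-\alpha)(\beta-z\cdot v)$ makes $g(S_{k\wedge\tau_\Lambda})+(k\wedge\tau_\Lambda)/d$ a martingale, with $|g|\lesssim L^2$ within distance $1$ of the slab; optional stopping gives $E_{\rm SRW}^x[\tau_\Lambda]\lesssim_d L^2$, and Markov's inequality yields $q(CL^2)\leq\tfrac12$ for $C$ large.

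For \eqref{eq:LowerBoundRange} I would write $E_{\rm SRW}^0\#\bigl(\{S_k:0\leq k\leq\tau_{2N}\}\cap N\calC\bigr)=\sum_{y\in N\calC}P_{\rm SRW}^0(\tau_y<\tau_{2N})$ and use the last-visit identity $P_{\rm SRW}^0(\tau_y<\tau_{2N})=G_{2N\calC}(0,y)/G_{2N\calC}(y,y)$. Every $y\in N\calC$ lies at distance $\gtrsim N$ from $\bbZ^d\setminus 2N\calC$, so \eqref{eq:GFLambda} bounds the denominator by $\lesssim\log N$ when $d=2$ and by $\lesssim1$ when $d\geq3$; domain monotonicity together with the ball Green function asymptotics bounds the numerator below by $G_{B(0,cN)}(0,y)$, which is $\gtrsim\log\bigl(N/(|y|+1)\bigr)$ when $d=2$ and $\gtrsim(|y|+1)^{2-d}$ when $d\geq3$ for $|y|\leq cN/2$, where $c$ is chosen so that the Euclidean ball $B(0,c)\subseteq\calC$ and hence $B(0,cN)\subseteq N\calC\subseteq 2N\calC$. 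Summing over the $\asymp N^d$ sites $y$ with $|y|\leq cN/10$ then produces the claimed lower bounds $\gtrsim N^2/\log N$ ($d=2$) and $\gtrsim\sum_{r\leq cN/10}r^{d-1}r^{2-d}\asymp N^2$ ($d\geq3$).

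The estimate \eqref{eq:EssentiallyUniformExitProb} is the one I expect to require the most care. Decomposing on the last step gives $P_{\rm SRW}^0(S_{\tau_\calC}=y)=\tfrac1{2d}\sum_{y'\sim y,\,y'\in\calC}G_\calC(0,y')$, so the statement reduces to the two-sided comparability of $G_\calC(0,y')$ over the inner-boundary sites $y'$. For the round convex bodies that arise in the applications — the $\xi$-balls $\UaK{K}$, i.e.\ a fixed bounded centrally symmetric convex set rescaled, whose diameter is comparable to its inradius — the base point $0$ sits at depth $\gtrsim\mathrm{diam}(\calC)$ inside $\calC$, and the required comparability is the standard boundary Harnack estimate: the positive harmonic function $z\mapsto P_{\rm SRW}^z(S_{\tau_\calC}=y)$ is transported from the bulk to a neighbourhood of an arbitrary boundary point using the exterior cone (Lipschitz) regularity afforded by convexity. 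This is the single place where convexity and the macroscopic depth of $0$ are genuinely used — the comparability breaks down for very elongated $\calC$ — whereas the other three items are bookkeeping on top of classical random walk estimates.
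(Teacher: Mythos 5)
Your proposal is correct, and each of the four items is handled at (at least) the level of rigour of the paper's own proof, but three of the four arguments are genuinely different. Item 1 is essentially identical: strong Markov at the first visit of $x$ plus domain monotonicity; your explicit use of the potential-kernel identity $G_{\bbZ^2\setminus\{w\}}(x,x)=2a(x-w)$ is just an unwound version of the bound the paper quotes from Lawler--Limic. For item 2 the paper argues via the strong Markov property and the claim that, ``by the CLT'', $\inf_{y\in\Lambda}P_{\rm SRW}^y(\tau_\Lambda\leq L^2)$ is uniformly positive; you instead use submultiplicativity of $\sup_{x}P_{\rm SRW}^x(\tau_\Lambda>n)$ together with an optional-stopping bound $E_{\rm SRW}^x[\tau_\Lambda]\leqs L^2$ for $\Lambda$ contained in a slab of width $O(L)$. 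Your restriction to convex (slab-like) $\Lambda$ is not a defect: the hypothesis $d(\cdot,\Zd\setminus\Lambda)\leq L$ alone does not suffice in $d\geq3$ (consider $\Lambda=\bbZ^3\setminus L\bbZ^3$, for which the walk avoids the complement for $L^3$ steps with probability bounded below), and convexity holds in every application, so your version is the more honest one. For item 3 the paper simply cites Lemma~1.7.4 of Lawler for cubes and asserts that the proof adapts; your last-step decomposition combined with a boundary Harnack/Harnack-chain comparison of $G_\calC(0,\cdot)$ on the inner boundary is in effect a sketch of that adaptation, and you correctly isolate the hypotheses (bounded eccentricity, $0$ at macroscopic depth) under which the stated uniformity can hold --- the paper's statement is silent on this, even though it is only ever applied to dilates $\UaK{K}$ of a fixed ball. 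For item 4 the paper stops the walk at time $\epsilon N^2$ and invokes Dvoretzky--Erd\H{o}s for the expected range, whereas you sum the hitting probabilities $P_{\rm SRW}^0(\tau_y<\tau_{2N})=G_{2N\calC}(0,y)/G_{2N\calC}(y,y)$ over $y\in N\calC$ and feed in item 1 together with ball Green-function lower bounds; this is self-contained, reuses item 1, and avoids the (fixable) step in the paper where the restriction to the event $\{\tau_N>\epsilon N^2\}$ is silently dropped at the cost of a factor $\tfrac12$. Both routes deliver the same estimates; yours trades two external citations for standard Green-function asymptotics and is more careful about exactly which uniformity is being claimed.
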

\begin{proof}
First of all, the inequality $G_\Lambda(0,x) \leq G_\Lambda(x,x)$ is a direct consequence of the strong Markov property. We shall use the following well-known bounds: 
\begin{gather*}
G_{\Zd\setminus\{x\}}(0,0) \leqs \log\abs{x}\qquad(d=2),\\
G_{\Zd}(0,0) \leqs 1\qquad(d\geq 3).
\end{gather*}
The first bound can be found in~\cite[Theorem~4.4.4 and (4.31)]{LawlerLimic}, while the second follows directly from transience. The first claim follows immediately from these estimates and the obvious monotonicity property: $G_{\Lambda_1}(x,x)\leq G_{\Lambda_2}(x,x)$ when $\Lambda_1\subset\Lambda_2$.

The second claim is a consequence of the strong Markov property and the fact that, by the CLT,
\[
\inf_{y\in\Lambda} P_{\rm SRW}^y(\tau_\Lambda\leq L^2) > 0,
\]
uniformly in $L$.

The third claim for square boxes  is Lemma~1.7.4 in \cite{Lawler}. The proof there is adjustable to
the case of centrally symmetric bounded convex domains. 

Choosing $\epsilon$ small enough (independently of $N$) so that $P_{\rm SRW}^0(\tau_N>\epsilon N^2)\geq 1/2$, the last claim follows from
\[
E_{\rm SRW}^0 \#\bigl(\setof{S_k}{0\leq k\leq \tau_{2N}} \cap N\calC \bigr)
\geq
\tfrac12 E_{\rm SRW}^0 \#\setof{S_k}{0\leq k\leq \epsilon N^2}
\]
and the bounds~\cite{DvoretzkyErdos}
\[
E_{\rm SRW}^0 \#\setof{S_k}{0\leq k\leq \epsilon N^2} \geqs
\begin{cases}
N^2/\log N	&	(d=2),\\
N^2		&	(d\geq 3).
\end{cases}
\]
\qed
\end{proof}
\medskip

\end{document}